\tikzset{place/.style={draw,circle,inner sep=2.8pt,semithick}}  
\tikzset{transition/.style={rectangle, thick,fill=black, minimum width=8.4pt,inner ysep=0.5pt}}
\tikzset{transitionV/.style={rectangle, thick,fill=black, minimum height=8.4pt,inner xsep=0.5pt}} 
\tikzset{jeton/.style={draw,circle,fill=black!80,inner sep=.35pt}}
\tikzset{pre/.style={=stealth'}}
\tikzset{post/.style={->,shorten >=1pt,>=stealth'}}
\tikzset{-|/.style={to path={-| (\tikztotarget)}}, |-/.style={to path={|- (\tikztotarget)}}}
\tikzset{bicolor/.style 2 args={dashed,dash pattern=on 1pt off 1pt,#1, postaction={draw,dashed,dash pattern=on 1pt off 1pt,#2,dash phase=1pt}}}   
\tikzset{arrowPetri/.style={>=latex,rounded corners=5pt,semithick}}
\newcommand{\R}{\mathbb{R}}
\newcommand{\Fbb}{\mathbb{F}}
\newcommand{\RS}{\R^{|S|}}
\newcommand{\N}{\mathbb{N}}
\newcommand{\Sg}{\mathscr{S}}
\newcommand{\Pcal}{\mathcal{P}}
\newcommand{\Vcal}{\mathcal{V}}
\newcommand{\tmin}{t_{\min}}
\newcommand{\tmax}{t_{\max}}
\newcommand{\SSigma}{\mathfrak{S}}
\newcommand{\unbm}{\mathbbm{1}}
\newcommand{\unbf}{\mathbf{1}}
\newcommand{\zerobf}{\mathbf{0}}
\newcommand{\Kbox}{\scalebox{.72}{$[K]$}}
\newcommand{\vsb}{v^{\sbullet}}
\newcommand{\tInterval}{[-\tmax, 0)}
\newcommand{\CFC}[1]{\mathcal{C}(#1)}
\newcommand{\ie}{i.e.}
\newcommand{\eg}{e.g.}
\newcommand{\SminusZ}{S\setminus\{0\}}
\newcommand{\supp}{\mathrm{supp}}
\newcommand\sbullet[1][.66]{\mathbin{\vcenter{\hbox{\scalebox{#1}{$\bullet$}}}}}
\newcommand{\textforall}{\text{for all}\;\,}
\newcommand{\textin}{\;\,\text{in}\;\,}
\newcommand{\udensdot}[1]{%
    \tikz[baseline=(todotted.base)]{
        \node[inner sep=1pt,outer sep=0pt] (todotted) {#1};
        \draw[densely dotted, opacity =.8] (todotted.south west) -- (todotted.south east);
    }%
}%
\definecolor{hreflinkcolor}{HTML}{3f87d3}
\definecolor{hrefcitecolor}{HTML}{ee7c27}
\definecolor{hrefurlcolor}{rgb}{1,0.5,0}
\newtheorem{assumption}{Assumption}
\begin{document}

\title{Computing Transience Bounds of Emergency Call Centers: a Hierarchical Timed Petri Net Approach}
\titlerunning{Computing Transience Bounds of Emergency Call Centers}

\author{Xavier Allamigeon \and Marin Boyet \and St\'ephane Gaubert}
\authorrunning{X. Allamigeon, M. Boyet and S. Gaubert}
\institute{INRIA and CMAP, \'Ecole polytechnique, IP Paris, CNRS\\
  \email{Firstname.Name@inria.fr}}
\maketitle

\begin{abstract}
  A fundamental issue in the analysis of emergency call centers is to estimate the time needed to return to a congestion-free regime after an unusual event with a massive arrival of calls. Call centers can generally be represented by timed Petri nets with a hierarchical structure, in which several layers describe the successive steps of treatments of calls. We study a continuous approximation of the Petri net dynamics (with infinitesimal tokens). Then, we show that a counter function, measuring the deviation to the stationary regime, coincides with the value function of a semi-Markov decision
  problem. Then, we establish a finite time convergence result, exploiting the hierarchical
  structure of the Petri net. We obtain an explicit bound for the transience time,
  as a function of the initial marking and sojourn times.
  This is based on methods from the theory of stochastic shortest paths and
  non-linear Perron--Frobenius theory.  
  We illustrate the bound on a case study of a medical emergency call center.
  \keywords{Timed Petri Nets, Continuous Petri Nets, Stationary Regimes, Transience bound, Emergency Call Centers, Semi-Markov Decision Processes, Stochastic Shortest Path}
\end{abstract}
\renewcommand{\thefootnote}{}
\footnote{Version of January 28, 2022.}

\section{Introduction}

\paragraph{Context} The handling of emergency calls in dedicated call centers features various concurrency and synchronization patterns, breaking down in a stepwise process with tasks involving multiple agents, to be performed with as little delay as possible.
A fundamental question is to fix the staffing so that calls be swiftly handled, taking into account the customary demand, as well as scenarios allowing sudden bulk of calls, originating for instance from exceptional events. 
In particular, one needs to compute the time needed to absorb such a bulk of calls, depending on the center characteristics and on the number of agents of various types. The treatment of incoming calls may be delayed
during the transient phase, and so, the duration
of this phase appears to be a critical performance measure.

Timed Petri nets have been used
in~\cite{allamigeon2015performance,allamigeon2020piecewise}
to model emergency call centers.
Along the lines of~\cite{CGQ95b,allamigeon2020piecewise},
timed Petri nets can be modelled by counter variables
that give the number of firings of the transitions as a function of time.
The discrete dynamics is generally hard to analyze, and so,
a continuous approximation, with infinitesimal firings
and stationnary routings, has been studied there.
Then, the evolution of the counter
variables is determined by a dynamic programming
equation of semi-Markov type. In this way, first order performance measures like the long run throughput, were computed analytically as a function of resources~\cite{allamigeon2020piecewise}. However, finer key performance evaluation issues, like the understanding of the transient behavior --- in particular the estimation of the ``catch-up time'', \ie, the time needed to return to a stationary regime --- have not been addressed so far in this setting. 

\paragraph{Contribution}
We consider a class of continuous timed Petri nets with a single input.
In a reference scenario, this input is an affine function of time; this represents a regular arrival of calls in our application. Then, we study the behavior of the dynamical system under a deviation from this scenario, induced for instance by a bulk of arrivals.

Our main result shows that when the system is sufficiently staffed,
meaning that the intrinsic throughput of the Petri net exceeds
the input flow, the Petri net trajectory ultimately catches up a stationary regime driven by the input, see~\Cref{thm:SSSP}.
This result is obtained by showing that the deviation to the stationary regime coincides with the value function of a stochastic shortest path problem (SSP),
see~\Cref{thm:normalization}.
In this way, we are reduced to quantifying the convergence time of SSP problems. In \Cref{thm:finite_time_theoretical}, we characterize the SSP configurations for which the convergence occurs in finite time. This characterization involves the existence of a partial order over states, such that all optimal policies make moves that decrease the order. 
Then, we consider a hierarchical class of timed Petri nets, for which this partial order is known \textit{a priori} --- in the application to call centers, this corresponds to the natural ordering of tasks in the chain of treatment. In~\Cref{thm:bound_SSP}, we obtain an explicit upper bound on the catch-up times (or transience times) to recover from a perturbation. We illustrate our results on the emergency call center application throughout the paper. \Cref{sec:intro_aux_SMDP} provides the needed background and tools on semi-Markov decision processes; a novelty here, of a somewhat technical nature, is to handle the case of instantaneous transitions, which arises in our applications. 

\paragraph{Related work}
The question of convergence of the earliest behavior of timed
event graphs to a periodic or stationary regine has received
much attention in the discrete event systems literature,
see~\cite{baccelli1992sync} (especially Th.~3.109), and also
~\cite[Ch.~8 and 9]{HOW:05}, with an application to the Dutch
railway network.
For timed event graphs, the duration of the transient behavior,
also sometimes called ``coupling time'',
has been extensively studied, in particular by techniques of max-plus spectral theory, see~\cite{BG01,sergei,merlet2014weak,merlet2021}. The same problem
has arisen in the setting of deterministic
dynamic programming~\cite{hartmann},
and in the analysis of distributed algorithms~\cite{bernadette},

Here, we extend the bounds given in the above references,
passing from timed event graphs to continuous Petri nets with
stationary routings, or, equivalently, passing
from deterministic (semi-)Markov decision processes
to stochastic semi-Markov decision processes.
By comparison with the ``deterministic'' case,
our proofs require new tools, coming from non-linear
Perron-Frobenius theory and from the theory of stochastic shortest
paths. In particular, \Cref{thm:SSSP} on the asymptotic convergence time builds on the work of
~\cite{bertsekas1991analysis} on stochastic shortest path problems, and extends some of these results to the semi-Markov case. \Cref{thm:finite_time_theoretical} exploits techniques of non-linear Perron--Frobenius theory~\cite{akian2011collatz,akian2019solving} in order to characterize the property of convergence in finite time. To our knowledge, no characterization of the finite time convergence was known, even in the setting of Markov decisions processes.

Estimating the speed of convergence to the stationary regime
for Markov decision processes is indeed a difficult and classical
issue. General asymptotic convergence results, like the ones of
~\cite{Schweitzer1978a,SchweitzerFedergruenEquation,schweitzer79}, show that a convergence does occur with a ultimately geometric rate. However,  they lead to bounds and speed estimates that are nonconstructive. An explicit bound of the time needed to enter in the geometric convergence regime was given
in~\cite{bonet2007speed}, for shortest path stochastic configurations, supposing that all costs are positive. In contrast, we consider here the property of {\em finite time} convergence, leading to different bounds.

We rely on a correspondence between continuous timed Petri nets and
semi-Markov decisions processes, 
developed in a series of works~\cite{CGQ95a,CGQ95b,allamigeon2020piecewise}.
This allows one to obtain asymptotic theorems and develop computational methods
to solve performance evaluation issues by means of Markov decision
techniques. Related analytical results were obtained in~\cite{gaujal2004optimal} with a different approach.
Alternative approaches to the question of ``absorption time''
may rely on stochastic models, or network calculus. We leave
the treatment of such aspects for further work, noting
that the continuous Petri net behaviors studied arise
as scaling limits of discrete deterministic or stochastic models.
Hence, capturing probabilistic or network calculus aspects is expected
to add one layer of difficulty -- see e.g.~\cite{boeufrobert} for a probabilistic treatment of emergency call centers.

Finally, we refer the reader to~\cite{yushkevich1982semi,feinberg1994constrained,ross1970average} for background
on semi-Markov decision processes, a.k.a, renewal programs~\cite{Denardo-Fox68}. See also~\cite{huang2011finite} for a recent reference.

The proofs of our main results are presented in appendix; the essential tools and ideas, however, are in the body of the paper. Additional explanatory or illustrative materials can be found in appendix. 

\section{Petri net model of a medical emergency call center}
A specific motivation in computing transience bounds for timed Petri nets originates from a real-life case study of the medical emergency call centers of the Paris area (SAMU 75, 92, 93 and 94), that is discussed in details in~\cite{allamigeon2020piecewise}.%

We consider a medical emergency call center with three types of agents: \emph{medical regulation assistants} (MRAs) who pick up calls and orient patients through the system, \emph{emergency physicians} who handle the calls deemed to be the most serious by the MRAs, after which it can be decided to send an ambulance or a mobile intensive care unit, and \emph{general practicioners}.
Once a MRA has detected that a patient should talk with some emergency physician, the former stays on line with the patient and waits for one of the latter to be available; at this point a very short conversation happens between the MRA and the doctor, in which the MRA summarizes
the case. Calls not passed to emergency physicians by the MRAs are either transfered to general practitioners if they concern less serious medical matters, and in this case the patient is put on hold in a virtual waiting room, or these calls are hang up if it was not a health-related distress call. In the last two situations, the MRA is able to pick up immediately a new incoming call.
We  focus on the next fundamental tasks of the emergency chain
of treatment.

\setlist[enumerate,1]{start=0}
    \begin{enumerate}[label=\textsc{Task} \arabic*:, leftmargin = 1.1cm]
    \item an emergency inbound call arrives
    \item an inbound call is picked up by a medical regulation assistant, who will decide if the call should be passed on to an emergency physician or not
    \item the instruction by the MRA of a call not requiring to talk with the emergency physician is completed
    \item the instruction by the MRA of a call requiring to talk with the emergency physician is completed \textbf{and} communication with the latter is initiated
    \item the short briefing between the MRA and the emergency physician is over, phone consultation between the patient and the physician starts 
    \item the consultation of the patient with the physician ends
    \end{enumerate}
  \setlist[enumerate,1]{start=1}

  Depending on the need to talk with an emergency physician or not, calls will follow \textsc{Tasks} $0$, $1$ and $2$ or \textsc{Tasks} $0$, $1$, $3$, $4$ and $5$. For $t\geq 0$, we denote by $z_i(t)$ the number of \textsc{Tasks} $i$ completed up to time $t$, starting with no completed tasks at the instant $t=0$.

  We assume that calls arrive at a rate $\lambda$, and that a fraction $\pi$ of them will require a discussion with an emergency physician. There are a total of $N_A$ MRAs and $N_P$ emergency doctors. The conversation between the patient and the MRA takes a time $t_1$, the synchronization step between an MRA and a physician takes a time $t_2$, and the consultation between the physician and the patient consumes a time $t_3$ (all these durations are assumed constant for sake of simplicity).

  \begin{figure}
    \begin{center}
      \vspace{-.76cm}
      \begin{tabular}{ccc}\def\tkzscl{.27}
\definecolor{colorARM}{rgb}{0,0,0}
\definecolor{colorARMres}{rgb}{0,0,0}
\definecolor{colorAMU}{rgb}{0,0,0}
\definecolor{colorexit}{rgb}{0,0,0}

\tikzset{place/.style={draw,circle,inner sep=2.5pt,semithick}}  
\tikzset{transition/.style={rectangle, thick,fill=black, minimum width=2mm,inner ysep=0.5pt, minimum width=2mm}}
\tikzset{jeton/.style={draw,circle,fill=black!80,inner sep=.35pt}}
\tikzset{pre/.style={=stealth'}}
\tikzset{post/.style={->,shorten >=1pt,>=stealth'}}
\tikzset{-|/.style={to path={-| (\tikztotarget)}}, |-/.style={to path={|- (\tikztotarget)}}}
\tikzset{bicolor/.style 2 args={dashed,dash pattern=on 1pt off 1pt,#1, postaction={draw,dashed,dash pattern=on 1pt off 1pt,#2,dash phase=1pt}}}   
\tikzset{arrowPetri/.style={>=latex,rounded corners=5pt}}

\begin{tikzpicture}[scale=\tkzscl,font=\tiny]

\def\p{2.2}

\begin{scope}[shift={(0,4)}]
\node[place] (pool_arm) at ($(-2*\p,-1.5*\p)$) {};
\node (txt_Na) at ($(pool_arm)+(-1.3,0)$) {$N_A$};    

\node[transition]    (q_arrivals)                          at    (0, 0) {};
\node[place]         (p_inc_calls)                         at    ($(q_arrivals) + (0, \p)$)  {};
\node[transition]    (q_inc_calls)                         at    ($(p_inc_calls) + (0, \p)$)  {};
\node[place]         (p_arrivals)        at    ($(q_arrivals) + (0,-\p)$) {};
\node[place]         (p_arrivals2)       at    ($(q_arrivals) + (2*\p,-\p)$) {};
\node[transition]    (q_debut_NFU)       at    ($(p_arrivals) + (0,-\p)$) {};
\node[transition]    (q_debut_AMU)         at    ($(p_arrivals) + (2*\p,-\p)$) {};

\draw[->,arrowPetri,colorARM] (q_arrivals)  |- ($(q_arrivals)+(.5*\p,-.3*\p)$) -| (p_arrivals2);
\draw[->,arrowPetri,colorARM] (q_debut_NFU) |- ($(q_debut_NFU)+(0,-.5*\p)$) -| (pool_arm);
\draw[->,arrowPetri,colorARM] (p_arrivals)  -- (q_debut_NFU);    
\draw[->,arrowPetri,colorARM] (q_arrivals) -- (p_arrivals);
\draw[->,arrowPetri,colorARM] (p_arrivals2) -- (q_debut_AMU);

\node[place]         (p_synchro)           at    ($(q_debut_AMU)  + (0,-\p)$) {};
\node[transition]    (q_unsynchro)         at    ($(p_synchro)  + (0,-\p)$) {};

\draw[arrowPetri]    (q_inc_calls) |- ($(q_inc_calls)+(-.35*\p,.25*\p)$);
\draw[dashed, arrowPetri]    (q_inc_calls) |- ($(q_inc_calls)+(-1.5*\p,.25*\p)$);
\draw[->,arrowPetri] (q_inc_calls) -- (p_inc_calls);
\draw[->,arrowPetri]                                    (p_inc_calls) -- (q_arrivals);
\draw[->,arrowPetri,colorARM]                           (q_unsynchro) -- ($(q_unsynchro)+(0,-.5*\p)$)  -|  (pool_arm);

\node[place]    (p_consult_AMU)    at    ($(q_unsynchro)+(0,-\p)$) {};

\draw[->,arrowPetri,bicolor={colorARM}{colorAMU}]    (q_debut_AMU) -- (p_synchro);
\draw[->,arrowPetri,bicolor={colorARM}{colorAMU}]    (p_synchro) -- (q_unsynchro);

\draw[->,arrowPetri,colorARM]                           (pool_arm)    |- ($(q_arrivals)+(-.5*\p,.5*\p)$) -- (q_arrivals);

\node (txt_taus) at ($(p_synchro)+(.5*\p,0)$) {$\tau_2$};
\node (txt_tau1) at ($(p_arrivals.center)+(.5*\p,0)$) {${\tau}_1$};
\node (txt_tau1) at ($(p_arrivals2.center)+(.5*\p,0)$) {${\tau}_1$};
\node (txt_tau3) at ($(p_consult_AMU.center)+(.5*\p,0)$) {${\tau}_3$};
\node (txt_pi)  at ($(p_arrivals.center)+(-1.4,1.65)$) {$1\!-\!\pi$};
\node (txt_pi3) at ($(p_arrivals.center)+(4.6, 1.65)$) {$\pi$};

\node[place]      (pool_amu_old)    at    ($(q_unsynchro)+(2*\p,.5*\p)$) {};
\node (txt_Nm) at ($(pool_amu_old)+(1.3,0)$) {$N_P$};    

\node[transition] (q_end_consult_AMU)                  at    ($(p_consult_AMU)+(0,-\p)$) {};      
\draw[->,arrowPetri,colorAMU]    (q_unsynchro)       -- (p_consult_AMU);
\draw[->,arrowPetri,colorAMU]    (p_consult_AMU)     -- (q_end_consult_AMU);
\draw[->,arrowPetri,colorAMU]    (q_end_consult_AMU) -- ($(q_end_consult_AMU)+(0,-1)$) -| (pool_amu_old);
\draw[->,arrowPetri,colorAMU]    (pool_amu_old)      |- ($(q_debut_AMU)+(.5*\p,.5*\p)$)      -- (q_debut_AMU);

\node (txt_z0) at ($(q_inc_calls)+(1*\p,0)$) {$z_0 = \lambda t$};
\node (txt_z1) at ($(q_arrivals)+(.5*\p,0)$) {$z_1$};
\node (txt_z2) at ($(q_debut_NFU)+(.5*\p,0)$) {$z_2$};
\node (txt_z3) at ($(q_debut_AMU)+(.5*\p,0)$) {$z_3$};
\node (txt_z4) at ($(q_unsynchro)+(.5*\p,0)$) {$z_4$};
\node (txt_z5) at ($(q_end_consult_AMU)+(.5*\p,0)$) {$z_5$};

\end{scope}

\end{tikzpicture}	
        & \hspace{.1cm} & \def\tkzscl{.55}\definecolor{orangeDIY}{rgb}{.92,.5,.11}
\begin{tikzpicture}[auto,node distance=8mm,>=latex,scale=\tkzscl]
  \tikzset{arrowPetri/.style={>=latex,rounded corners=5pt}}

    \tikzstyle{round}=[thick,draw=black,circle,inner sep=2pt]

    \def\p{2.2}

    \node[round] (s0) at (0,0)              {\scriptsize$0$};
    \node[round] (s1) at ($(s0)+(0,-\p)$) {\scriptsize$1$};
    \node[round] (s2) at ($(s1)+(0,-\p)$)   {\scriptsize$2$};
    \node[round] (s3) at ($(s1)+(\p,-\p)$)  {\scriptsize$3$};
    \node[round] (s4) at ($(s3)+(0,-\p)$)   {\scriptsize$4$};
    \node[round] (s5) at ($(s4)+(0,-\p)$)   {\scriptsize$5$};

   \coordinate (a0)  at ($(s0)+(0,.5*\p)$) {};  
   \coordinate (a11) at ($(s1)+(0,.5*\p)$) {};  
   \coordinate (a12) at ($(s1)+(-.5*\p,0)$) {};  
   \coordinate (a2)  at ($(s2)+(0,.5*\p)$) {};  
   \coordinate (a31) at ($(s3)+(0,.5*\p)$) {};  
   \coordinate (a32) at ($(s3)+(.5*\p,0)$) {};  
   \coordinate (a4)  at ($(s4)+(0,.5*\p)$) {};  
   \coordinate (a5)  at ($(s5)+(0,.5*\p)$) {};  

    \node (lbl_a0)  at ($(a0) +(.75,0)$) {\tiny$(\lambda,1)$};
    \node (lbl_a11) at ($(a11)+(.75,0)$) {\tiny$(0,0)$};
    \node (lbl_a12) at ($(a12)+(-.25,.4)$) {\tiny$(N_A,0)$};
    \node (lbl_a2)  at ($(a2) +(.85,0.2)$) {\tiny$(0,\tau_1)$};
    \node (lbl_a31) at ($(a31)+(.85,0.2)$) {\tiny$(0,\tau_1)$};
    \node (lbl_a32) at ($(a32)+(0.3,0.45)$) {\tiny$(N_P/\pi,0)$};
    \node (lbl_a4)  at ($(a4) +(-.9,0)$) {\tiny$(0,\tau_2)$};
    \node (lbl_a5)  at ($(a5) +(-.9,0)$) {\tiny$(0,\tau_3)$};
    \node (lbl_pi1) at ($(s2) +(-1.1,-1*\p+.3)$) {\tiny$\pi$};
    \node (lbl_pi1) at ($(s2) +(-1,.3)$) {\tiny$1\!-\!\pi$};

     \draw[-{Latex[scale=1]}, rounded corners=5pt] (a0) |- ($(a0)+(-.5*\p-.5,.2)$) |- (s0);
     \draw[-{Latex[scale=1]}, rounded corners=5pt] (a11) -- (s0);
     \draw[-{Latex[scale=1]}, rounded corners=5pt] (a12) -| ($(a12)+(-.5,-1)$) |- (s2);
     \draw[-{Latex[scale=1]}, rounded corners=5pt] (a12) -| ($(a12)+(-.5,-1)$) |- (s4);
     \draw[-{Latex[scale=1]}, rounded corners=5pt] (a31) |- (s1);
     \draw[-{Latex[scale=1]}, rounded corners=5pt] (a2) -- (s1);
     \draw[-{Latex[scale=1]}, rounded corners=5pt] (a4) -- (s3);
     \draw[-{Latex[scale=1]}, rounded corners=5pt] (a5) -- (s4);
     \draw[-{Latex[scale=1]}, rounded corners=5pt] (a32) -| ($(s5)+(.66*\p+.4,0)$) -- (s5);

   \draw[-{Square[scale=1.5, open, fill=white]}, semithick] (s0) -- (a0);
   \draw[-{Square[scale=1.5, open, fill=white]}, semithick] (s1) -- (a11);
   \draw[-{Square[scale=1.5, open, fill=white]}, semithick] (s1) -- (a12);
   \draw[-{Square[scale=1.5, open, fill=white]}, semithick] (s2) -- (a2);
   \draw[-{Square[scale=1.5, open, fill=white]}, semithick] (s3) -- (a31);
   \draw[-{Square[scale=1.5, open, fill=white]}, semithick] (s3) -- (a32);
   \draw[-{Square[scale=1.5, open, fill=white]}, semithick] (s4) -- (a4);
   \draw[-{Square[scale=1.5, open, fill=white]}, semithick] (s5) -- (a5);
\end{tikzpicture} \end{tabular}
    \vspace{-.3cm}
    \end{center}
    \caption{Petri net representing an emergency call center~\cite{allamigeon2020piecewise} (left).%
      The corresponding undiscounted SMDP (right), see \S\ref{sec:intro_aux_SMDP} for more information:
     states (resp.\ actions) are depicted by circles (resp.\ squares). A pair of the form (cost, sojourn time) is attached to each action. Transition probabilities from actions to states are given along the arcs if not equal to one.}
    \label{fig:SMDP}
      \label{fig:EMS_A_SMDP}
    \label{fig-emsA}
  \end{figure}
  In~\cite{allamigeon2020piecewise}, we modeled the previous organization by a continuous timed Petri net~\cite{CGQ95b}, shown on~\Cref{fig:EMS_A_SMDP}. So $z_i(t)$
  represents the number of firings of the transition labeled $z_i$, up
  to time $t$. We use the approximation in~\cite{allamigeon2020piecewise},
  allowing ``infinitesimal firings'', so that $z_i(t)$ is a real nonnegative
  number, rather than an integer. 
  In this setting, the functions~$z_i$ are governed by the following dynamics:
  \begin{equation}\hspace{-.2cm}\left\{
    \begin{array}{r@{\!\;\;}c@{\!\;\;}r@{\!\;\;}c@{\!\;\;}l}
    z_1(t) & = & z_0(t)\;\; & \wedge & \big(N_{A} + z_2(t) + z_4(t)\big)\\[1ex]
    z_2(t) & = & (1-\pi)z_1(t-t_1) & &\\[1ex]
    z_3(t) & = & \pi  z_1(t-t_1) &\wedge& \big(N_{P} + z_5(t)\big)\\[1ex]
    z_4(t) & = & z_3(t-t_2) & &\\[1ex] 
    z_5(t) & = & z_4(t-t_3)& &
    \end{array}\right.\tag{EMS}
    \label{SAMU1}
  \end{equation}
  where $\wedge$ stands for the minimum operation. A minimum of several terms arises if a task requires the synchronization of multiple resources (e.g., an inbound call and the availability of a MRA, or availability of both a MRA and a doctor). Delayed terms (of the form $z_i(t - \tau)$) are induced by the completion time of tasks. Weights (\eg, $\pi$ and $1-\pi$) occur when calls are split in several categories.

\section{The Equivalence between Semi-Markov Decision Processes and Continuous Time Petri nets}
\label{sec:intro_aux_SMDP}
In this section, we recall the correspondence between the dynamics of continuous
timed Petri nets with stationary routings and semi-Markov decision processes,
developed in~\cite{allamigeon2020piecewise}.
\subsection{The SMDP model}
\label{ssec:well-posedness}
\label{ssec:computing_value}
\todo[inline]{SG: expliquer continuous}
In {\em Semi-Markov Decision Processes} (SMDPs)~\cite{puterman2014markov,yushkevich1982semi}, sometimes referred to as Markov renewal programs, the decision epochs are allowed to occur at any real-valued time. Indeed, between two successive moves, a \emph{sojourn time} attached to states and actions must elapse. 

Here, the finite set of states is denoted by $S$, and for all $i\in S$ the finite set of playable actions from state $i$ is denoted by $A_i$. We denote $A \coloneqq \biguplus_{i\in S}A_i$ the disjoint union of the $(A_i)_{i\in S}$.
 As a result of playing action $a$ from state $i$, the player immediately incurs a deterministic cost $c^a$, 
is held in the state $i$ for a nonnegative and deterministic (so possibly null) time $t^a$, and finally
goes to state $j\in S$ with probability $p^a_{j}$. Actions are pulled as soon as a state is reached, and we assume that
$\sum_{j\in S}p^a_{j}=1$ for all $i\in S$.
We depict in~\Cref{fig:EMS_A_SMDP} an SMDP with six states and eight actions. We will see that this is precisely the SMDP corresponding to our running example of a medical emergency call center, in the sense that the counter equations
of the Petri nets are equivalent ot the dynamic programming equations
of the SMDP~\cite{allamigeon2020piecewise}.

Recall that
a \emph{strategy}~\cite{puterman2014markov} is a mapping that associate choices of actions to play based on \emph{histories}, i.e. possible realizations of already visited states, pulled actions, incurred costs and sojourn times etc. We denote by $\Fbb$ the set of strategies.
A strategy $f$ in $\Fbb$ and an initial state $i\in S$ induce a probability measure $\mathbb{P}^f_i$ on the set of histories of the game. %
The strategy $f$ and the initial state $i$ also give rise to a random process $(\widehat{i}_k,\widehat{a}_k)_{k\in\N}$ (with $\widehat{i}_0=i$) of visited states and chosen actions. Denoting by $h=(i_k,a_k)_{k\in\N}$ a general trajectory realized by this process, we denote by $\widehat{c}_k$ (resp.\ $\widehat{t}_k$) the random variable such that $\widehat{c}_k(h)=c^{a_k}$ (resp.\ $\widehat{t}_k(h)=t^{a_k}$) for all $k$ in $\N$.

The {\em value function} $v^*:S\times \R \to \R$ of the game in finite horizon is then defined as follows, so that for $i$ in $S$ and $t$ in $\R$, $v^*(i,t)$ denotes the minimum (over all strategies) expected cost incurred by the player up to time $t$ by starting in state $i$ at the instant $0$:%
\begin{equation}\label{eq:def_value}
   v^*(i,t) \coloneqq \inf_{f\in\Fbb} \;\mathbb{E}^{f}_{i}\bigg(\sum_{k=0}^{\widehat{N_t}}\widehat{c}_k\bigg)
\end{equation}
where $\mathbb{E}^{f}_{i}$ denotes the expectation operator relatively to $\mathbb{P}^f_{i}$ and $\widehat{N_t}$ is the random variable with values in $\N$ such that 
\(\widehat{N_t}(h)=\sup\,\big\{n\in\N\;\big|\; \textstyle\sum_{k=0}^{n-1} \widehat{t}_k(h) \leq t\big\}\).
indeed observe that the cost $\widehat{c}_n$ is incurred at time $\sum_{k=0}^{n-1} \widehat{t}_k(h)$. We shall mainly focus on cases where $t\geq 0$, since the above definition implies $v^*(i,t) = 0$ for all $i$ in $S$ and $t < 0$.

Allowing zero sojourn times and yet immediate incurred costs raises the question of the well-posedness of the value function in~\eqref{eq:def_value}. To prevent \emph{Zeno behaviors}, i.e. accumulating infinitely many costs over a finite time period, a restriction is in order. We characterize \emph{non-Zeno} SMDPs using the standard notion of \emph{policies}: %
a policy $\sigma$ is a map from $S$ to $A$ such that $\sigma(i)$ lies in $A_i$ for every state $i$ of $S$ (some authors refer to this object as a {\em decision rule}). %
We denote by $\SSigma\coloneqq \prod_{i\in S}A_i$ the finite set of all the policies.
If $\sigma$ is a policy, $P^{\sigma}$ denotes the $|S|\times|S|$ matrix with entries $(p^{\sigma(i)}_{j})_{i,j\in S}$, while $c^{\sigma}$ (resp.\ $t^{\sigma}$) is the vector with entries $(c^{\sigma(i)})_{i\in S}$ (resp.\ $(t^{\sigma(i)})_{i\in S}$).

\begin{assumption}\label{ass:nonzeno}
For all policies $\sigma$ in $\SSigma$, for all finite classes $F\subset S$ of the Markov chain induced by $\sigma$, there is at least one state $i$ in $F$ with $t^{\sigma(i)}>0$, i.e. with positive sojourn time attached to the chosen action $\sigma(i)$ from $i$.
\end{assumption}

This assumption, milder than imposing an almost surely positive sojourn time for all actions of the game, is the same as the one that Schweitzer and Federgruen considered in their study~\cite{Schweitzer1978a} of an average-cost criterion for SMDPs. Under this assumption, one can check that the value function $v^*$ is well defined. In the rest of the paper, we shall denote by $\tmax$ the maximum positive sojourn time $t^a$ attached to an action $a$ in $A$.

In accordance with the dynamic programming principle, the value function
satsifies the following recursive optimality equation: %
\begin{equation}
  \tag{DP}
  \forall t\geq 0, \;\; \forall i\in S,\quad v(i,t) = \min_{a\in A_i} \bigg\{ c^{a} + \sum_{j\in S} p^{a}_{j}\,v(j, t-t^{a}) \bigg\} \, .
\label{eq:SMDP_finite_horizon}
\end{equation}
The {\em correspondence theorem}, established in~\cite[Th.~6.1]{allamigeon2020piecewise}, shows
that the counter functions of priority-free continuous timed Petri nets with preselection
routings are always of this form. When $t^a\equiv 1$, corresponding
to holding times equal to $1$ in all places, we recover
the standard dynamic programming equation of Markov decision
processes. However, allowing {\em real} soujourn times is natural
in the Petri-net applications, leading to a semi-Markov framework.
When $t^a\equiv 1$,
an initial condition $v(-1)$ fully determines the value function $v(t)$ for all $t \in \N$. In the case of SMDPs,
the knowledge of the value function over the whole interval $[t-\tmax, t)$ is needed to compute $v(t)$, where $\tmax$ is the maximum positive sojourn time $t^a$ attached to an action $a \in A$, and we get an infinite dimensional
  dynamics. This motivates the introduction of the space $\Vcal$ of bounded $|S|$-dimensional vector functions over $[-\tmax,0)$ in order to characterize the initial conditions of~\eqref{eq:SMDP_finite_horizon}. The following proposition shows that, under such an initial condition, there is a unique trajectory satisfying the dynamic programming equations~\eqref{eq:SMDP_finite_horizon}. The main difficulty is to handle null sojourn times, then terms of $v(\cdot, t)$ can appear in both sides of~\eqref{eq:SMDP_finite_horizon}.
We recall that a function of a real variable is {\em c\`adl\`ag} if it is right continuous and admits a limit at the left of every point.
\begin{proposition}\label{prop:uniquely_determined}
    Let $v^0 \in \Vcal$. Under Assumption~\ref{ass:nonzeno}, there is a unique function $v$ defined on $[-\tmax, \infty)$ satisfying  the dynamic programming equation~\eqref{eq:SMDP_finite_horizon} and for all $t\in[-\tmax,0)$, $v(t)=v^0(t)$. In addition, if $v^0$ is càdlàg\todo{SG: defined càdlàg above} (resp.\ piecewise-constant, piecewise-affine), then $v$ is càdlàg (resp.\ piecewise-constant, piecewise-affine).
\end{proposition}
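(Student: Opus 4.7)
The plan is to reinterpret the DP equation~\eqref{eq:SMDP_finite_horizon} at each fixed time~$t\geq 0$ as a fixed-point equation for $v(\cdot,t)$ parameterised by already-computed values of $v$ strictly before~$t$, to show that this fixed point is uniquely determined via a stochastic shortest path (SSP) argument that relies crucially on Assumption~\ref{ass:nonzeno}, and finally to propagate the construction forward while transferring regularity from the initial datum~$v^0$.

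First, I would split $A_i = A_i^+ \uplus A_i^0$ according to whether $t^a > 0$ or $t^a = 0$, and set $\alpha_i(t) := \min_{a \in A_i^+} \bigl[c^a + \sum_j p^a_j\, v(j, t-t^a)\bigr]$, with convention $+\infty$ if $A_i^+ = \emptyset$. Assuming $v$ is known on $[-\tmax, t)$, each $\alpha_i(t)$ is determined, and~\eqref{eq:SMDP_finite_horizon} at time $t$ reduces to the fixed-point equation
\[
  u(i) \;=\; \min\Bigl\{\alpha_i(t),\; \min_{a \in A_i^0}\bigl[c^a + \sum_j p^a_j\, u(j)\bigr]\Bigr\}, \qquad u = v(\cdot, t),
\]
which is the Bellman equation of an auxiliary SSP on $S \cup \{\star\}$ where from $i \in S$ one may either play a null-sojourn action $a \in A_i^0$ (staying in $S$) or ``stop'' (jumping to the absorbing state $\star$ with terminal cost $\alpha_i(t)$). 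I would then show that this SSP is \emph{proper}: if some SSP policy~$\pi'$ failed to reach~$\star$ almost surely, the chain on the non-stopping states would admit a closed class~$F$; extending $\pi'$ by arbitrary actions outside $F$ would yield a policy of the original SMDP with closed class~$F$ whose sojourn times are identically zero, contradicting Assumption~\ref{ass:nonzeno}. Properness delivers a unique solution $u^\star(\alpha)$, expressible as a $\min$ over augmented policies of affine combinations of the $\alpha_j$ weighted by hitting probabilities; in particular $u^\star$ is $1$-Lipschitz, concave and piecewise-affine in $\alpha$.

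Next, I would build $v$ by induction on the blocks $[k\tmin, (k+1)\tmin)$, where $\tmin$ is the smallest positive sojourn time. Inside one such block, $t^a > 0$ implies $t - t^a < k\tmin$, so $\alpha(t)$ depends only on values already constructed on previous blocks together with~$v^0$, and $v(\cdot, t) := u^\star(\alpha(t))$ is the unique solution of the fixed-point equation above. Regularity transfers by composition: if $v^0$ is càdlàg (resp.\ piecewise-constant, piecewise-affine), then each shifted function $t \mapsto v^0(\cdot, t - t^a)$, and hence $t \mapsto \alpha(t)$, shares the same structure, and composition with the continuous piecewise-affine map $u^\star$ preserves it; the argument then iterates across successive blocks.

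The main obstacle is the fixed-point step, specifically verifying properness of the auxiliary SSP, where Assumption~\ref{ass:nonzeno} is used in an essential way to rule out null-sojourn cycles. A secondary technicality is the handling of states with $A_i^+ = \emptyset$ (for which $\alpha_i = +\infty$): properness still guarantees that any policy eventually reaches a state where stopping is chosen and $\alpha$ is finite, so that $u^\star(\alpha)$ remains finite and the propagation goes through unchanged.
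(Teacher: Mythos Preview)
Your proposal is correct and follows essentially the same route as the paper: induction over blocks of length $\tmin$, with the implicit equation at each fixed time~$t$ shown to admit a unique solution thanks to Assumption~\ref{ass:nonzeno}, and regularity propagated through the resulting piecewise-affine dependence on past values. The only difference is one of phrasing: you cast the fixed-point step as an auxiliary SSP in which Assumption~\ref{ass:nonzeno} forces every policy to be proper (hence the Bellman map has a unique fixed point à la Bertsekas--Tsitsiklis), whereas the paper argues directly that the operator is a contraction in a weighted sup-norm via $\rho\big(P^{\sigma}|_{S_Z\times S_Z}\big)<1$ for all $\sigma$; the paper itself remarks that the two viewpoints are equivalent.
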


Building in~\Cref{prop:uniquely_determined}, we establish that $v^*$ satisfies~\eqref{eq:SMDP_finite_horizon}. %

\begin{theorem}\label{thm:value_is_S0}
The value function $v^*$ defined by~\eqref{eq:def_value} is the unique function null on $[-\tmax,0)$ that verifies the dynamic programming equations~\eqref{eq:SMDP_finite_horizon}.
\end{theorem}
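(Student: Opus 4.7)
The plan is to verify that $v^*$ possesses both defining properties of the target function — vanishing on $[-\tmax,0)$ and satisfying the dynamic programming equation~\eqref{eq:SMDP_finite_horizon} — and then invoke \Cref{prop:uniquely_determined} for uniqueness. That $v^*(i,t)=0$ for every $i\in S$ and $t<0$ is essentially by definition: the set $\{n\in\N\mid \sum_{k=0}^{n-1}\widehat{t}_k\le t\}$ is empty when $t<0$ (even $n=0$ gives the empty sum $0\not\le t$), so with the natural convention the sum $\sum_{k=0}^{\widehat N_t}\widehat c_k$ is empty $\mathbb{P}^f_i$-almost surely, and its expectation is zero regardless of the strategy $f$.

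The core of the argument is to establish the Bellman recursion on $[0,\infty)$ by the standard split into two inequalities. For the direction $v^*(i,t)\le \min_{a\in A_i}\{c^a+\sum_j p^a_j v^*(j,t-t^a)\}$, fix $a\in A_i$ and $\epsilon>0$; for each $j\in S$, pick a strategy $f_j$ on histories starting at state $j$ whose expected cumulative cost over horizon $t-t^a$ is at most $v^*(j,t-t^a)+\epsilon$. Concatenate these into a strategy on histories starting at $i$ that first plays $a$ and then, conditionally on reaching $j\in S$, proceeds according to $f_j$; the resulting expected cost is at most $c^a+\sum_j p^a_j(v^*(j,t-t^a)+\epsilon)$, and letting $\epsilon\to 0$ and minimizing over $a$ yields the bound. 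For the reverse direction, given any strategy $f\in\Fbb$ from $i$, condition on $(\widehat a_0,\widehat i_1)$: the residual strategy, viewed as a strategy on histories starting from $\widehat i_1=j$, has expected cost at least $v^*(j,t-t^{\widehat a_0})$. Integrating and using $\sum_j p^{\widehat a_0}_j=1$ bounds $\mathbb{E}^f_i[\sum_{k\ge 0}^{\widehat N_t}\widehat c_k]$ below by $\min_{a\in A_i}\{c^a+\sum_j p^a_j v^*(j,t-t^a)\}$; taking infimum over $f$ concludes. Uniqueness then follows at once from \Cref{prop:uniquely_determined} with initial datum $v^0\equiv 0$.

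I expect the main obstacle to be the rigorous handling of null sojourn times in the second inequality. When $t^a=0$, the conditioning argument produces, on both sides of~\eqref{eq:SMDP_finite_horizon}, contributions at the same time-coordinate, so one cannot simply induct on time. Here \Cref{ass:nonzeno} is essential: it guarantees that along any sample trajectory the cumulative sojourn time eventually strictly increases, which bounds the almost-sure number of instantaneous transitions in a finite window and ensures that the expected total cost is well defined and that the decomposition by the first move is legitimate. A secondary technicality is that strategies are defined on full histories and may be randomized, so the concatenation used in the ``$\le$'' direction and the conditional decomposition used in the ``$\ge$'' direction require the usual measurability bookkeeping on the canonical trajectory space; this is routine and I would relegate it to the appendix.
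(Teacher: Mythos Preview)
Your approach is correct and in fact more direct than the paper's. Both proofs rely on \Cref{prop:uniquely_determined} for uniqueness, but they differ in how they establish that $v^*$ satisfies~\eqref{eq:SMDP_finite_horizon}. You verify the Bellman equation for $v^*$ directly via the classical two-inequality argument (concatenation for ``$\le$'', one-step conditioning for ``$\ge$''). The paper instead lets $w$ denote the unique null-initial-condition solution of~\eqref{eq:SMDP_finite_horizon}, builds an explicit ``horizon-relevant stationary'' strategy $f^*$ from a measurable argmin selection of $w$, shows $v^{f^*}=w$ (hence $w\ge v^*$), and then proves $v^f\ge v^{f^*}$ for every Markovian $f$ via an iterative contraction argument with the operator $\varphi_t^*$ from the proof of \Cref{prop:uniquely_determined}. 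The paper's route therefore buys something extra not stated in the theorem: the infimum in~\eqref{eq:def_value} is actually a minimum, attained by a strategy of a particularly simple structure. If you do not need that, your argument is shorter.

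One remark on your self-diagnosed obstacle: the worry about null sojourn times in the ``$\ge$'' direction is misplaced for your particular argument. You are not inducting on time; you are verifying a pointwise identity between values of $v^*$, and the one-step decomposition $v^f(i,t)=\sum_{a}f_0(a)\big(c^a+\sum_j p^a_j\,v^{f'_a}(j,t-t^a)\big)$ (which the paper also records) holds regardless of whether $t^a=0$, provided $v^*$ and each $v^f$ are finite. That finiteness is exactly what Assumption~\ref{ass:nonzeno} guarantees (via the bound $\sup_f\mathbb{E}^f_i(\widehat N_t)<\infty$), and you invoke it appropriately. The genuine difficulty with zero sojourn times lives in the proof of \Cref{prop:uniquely_determined}, which you use as a black box; there it forces the implicit fixed-point / contraction machinery, but nothing additional is needed in your step.
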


The last theorem and \Cref{prop:uniquely_determined} show in particular that $v^*$ is piecewise-constant. %

\paragraph{\textnormal{\textbf{Example}}} In the example of a medical emergency call center, with equations governing the number of tasks complete over the time given by~\eqref{SAMU1}. On the other hand, we write the dynamic programming equations~\eqref{eq:SMDP_finite_horizon} verified by the value function of the SMDP depicted in~\Cref{fig:EMS_A_SMDP}, for all $t\geq 0$:
\begin{subequations}
\begin{eqnarray}%
  v^*(0,t) & = & v^*(0,t-t_{0})  + \lambda t_{0} \\
  v^*(1,t) & = & v^*(0,t)  \wedge  \big(N_{A} + (1-\pi)v^*(2,t) + \pi v^*(4,t)\big)\\
  v^*(2,t) & = & v^*(1,t-t_1) \\
  v^*(3,t) & = & v^*(1,t-t_1) \wedge \big(N_{P}/\pi + v^*(5,t)\big)\\
  v^*(4,t) & = & v^*(3,t-t_2) \\
  v^*(5,t) & = & v^*(4,t-t_3)
\end{eqnarray} %
  \label{SAMU-SMDP}
\end{subequations}

We denote $(e_0 \coloneqq 1, e_1 \coloneqq 1, e_2 \coloneqq 1-\pi, e_3 \coloneqq \pi, e_4 \coloneqq \pi, e_5 \coloneqq \pi)$. As shown in~\cite{allamigeon2020piecewise}, under steady call arrivals with rate $\lambda$ so that $z_0(t)=\lambda t$, it can be seen that $z_i(t)/e_i = v^*(i,t)$, since initial conditions (nullity before $t=0$) and dynamics coincide. This is a consequence of the main result of~\cite{allamigeon2020piecewise} which states that priority free timed Petri nets with stoichiometric invariant are in correspondence with SMDPs and share common dynamics equations (actually the correspondence $v^*(0,t)=\lambda t$ holds for $t_{0}\to 0$ but further computations shall not require to take this limit, see~\Cref{ssec:SSP}).

\subsection{Properties of the dynamics governing the value function}

To portray the effect of applying equations~\eqref{eq:SMDP_finite_horizon} to a function of $\Vcal$, it shall come in handy to introduce the following evolution operator.

\begin{definition}\label{def:semigroup}
  For $t\geq 0$, we define the {\em evolution operator} $\Sg_t$ as the self-map
  of the set of functions $\Vcal$, propagating an initial condition $v^0$ by $t$ time units, i.e.,
\begin{equation}\label{eq:def_semigroup}
  \Sg_t:
  v^{0}  \longmapsto  \left\{\begin{array}{rcl} [-\tmax,0) & \to & \RS \\ s & \mapsto & v(t+s)\enspace, \end{array}\right.
\end{equation} 
where $v$ is the function uniquely determined by the initial condition $v^0$ and the equations~\eqref{eq:SMDP_finite_horizon}, in accordance with~\Cref{prop:uniquely_determined}.
\end{definition}

The operator $\Sg_t$ for SMDPs with $t\geq 0$ plays the same role as the $t$-fold iterate of the Bellman operator for MDPs (with in that case $t\in \N$). 
It follows from~\Cref{thm:value_is_S0} and~\eqref{eq:def_semigroup} that for all $t\geq 0$ and sufficiently small $\varepsilon$, we have $v^*(t)=\Sg_{t+\varepsilon}[\tilde{\zerobf}](-\varepsilon)$, where $\tilde{\zerobf}$ denotes the null function of $\Vcal$. 
It readily follows that the family $(\Sg_t)_{t\geq 0}$ constitutes a one-parameter semi-group, \ie, $\Sg_0$ is the identity map of $\Vcal$, and for all $t,t'\geq 0$, we have $\Sg_t \circ \Sg_{t'} = \Sg_{t+t'}$.

We list below some important properties of the operators $(\Sg_t)_{t\geq 0}$.
The space $\Vcal$ is equipped with the usual pointwise partial ordering $\leq$, and with the sup-norm $\big\Vert v^0\big\Vert_{\infty}\coloneqq \sup_{s\in[-\tmax,0)}\big\Vert v^0(s)\big\Vert_{\infty}$. We denote by $\tilde{\unbf}$ the constant function of $\Vcal$ with all coordinates equal to $1$.
\begin{proposition}\label{prop:properties_semigroup}
 Let $t\geq 0$. The operator $\Sg_t$ is
(i)  additively homogeneous: \(\forall \alpha\in\R,\quad \Sg_t[v^0+\alpha\tilde{\unbf}]=\Sg_t[v^0]+\alpha\tilde{\unbf}\);%
 (ii)%
 monotone (or order-preserving): \(v^0 \leq v'{}^0 \implies \Sg_t[v^0] \leq \Sg_t[v'{}^0]\);%
  (iii) nonexpansive: \( \big\Vert \Sg_t[v'{}^0] - \Sg_t[v^0]\big\Vert_{\infty} \leq \big\Vert v'{}^0 - v^0\big\Vert_{\infty} \);%
 (iv) continuous in both the uniform and pointwise convergence topologies.
\end{proposition}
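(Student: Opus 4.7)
My approach is to deduce all four properties from the analogous properties of the one-step Bellman operator underlying \eqref{eq:SMDP_finite_horizon}, using the uniqueness granted by \Cref{prop:uniquely_determined} as the bridge. Namely, for a bounded $w:[-\tmax,\infty)\to\RS$, set $T[w](i,t):=\min_{a\in A_i}\bigl\{c^a+\sum_{j\in S}p^a_j\,w(j,t-t^a)\bigr\}$. Since each $(p^a_j)_{j\in S}$ is a probability distribution, $T$ is additively homogeneous, monotone, and sup-norm nonexpansive at this one-step level: a minimum of convex combinations (with weights summing to one) inherits these three properties. The trajectory $v$ associated by \Cref{prop:uniquely_determined} to an initial condition $v^0$ is, by definition, the unique function on $[-\tmax,\infty)$ extending $v^0$ and satisfying $v(\cdot,t)=T[v](\cdot,t)$ for all $t\ge 0$, and $\Sg_t[v^0]$ is its window on $[t-\tmax,t)$.

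For (i), I would check directly that if $v$ extends $v^0$ and is a fixed point of $T$, then $v+\alpha\unbf$ also is one --- the constant factors out via $\sum_j p^a_j=1$ --- and its restriction to $[-\tmax,0)$ is $v^0+\alpha\tilde{\unbf}$; uniqueness then identifies it with the trajectory associated to $v^0+\alpha\tilde{\unbf}$. For (ii), given $v^0\le v'{}^0$ with trajectories $v,v'$, I would proceed by induction along the (finite, on any bounded interval) sequence of breakpoints at which $v$ or $v'$ may jump, following the constructive scheme behind \Cref{prop:uniquely_determined}. At each breakpoint $t$, the actions in \eqref{eq:SMDP_finite_horizon} split into those with $t^a>0$, which pull in values of $v$ at strictly earlier times already controlled by the inductive hypothesis, and those with $t^a=0$, which produce a self-referential finite-dimensional system in $v(\cdot,t)$. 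Under \Cref{ass:nonzeno}, this instantaneous-transition system admits a unique solution that depends monotonically on its exogenous input (being the common limit of iterating a monotone operator from above and from below), which propagates $v(t)\le v'(t)$ through the induction.

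Property (iii) then follows from (i) and (ii) by the standard sandwich: with $\alpha:=\Vert v^0-v'{}^0\Vert_\infty$ one has $v'{}^0-\alpha\tilde{\unbf}\le v^0\le v'{}^0+\alpha\tilde{\unbf}$, and applying $\Sg_t$ together with additive homogeneity yields $\Vert\Sg_t[v^0]-\Sg_t[v'{}^0]\Vert_\infty\le\alpha$. Uniform continuity of $\Sg_t$ is immediate. For pointwise continuity, I would observe that on any fixed bounded time window the trajectory $v$ is determined by only finitely many evaluations of $v^0$, via the same breakpoint construction; combined with continuity of $\min$ and of convex combinations, this transfers pointwise convergence from $(v^0_n)$ to $(\Sg_t[v^0_n])$.

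The main obstacle I anticipate is the monotonicity step (ii) in the presence of zero-sojourn actions: one must argue that at each breakpoint the self-referential fixed-point system has a unique solution that is monotone in its exogenous input, which essentially reuses the non-Zeno argument that makes \Cref{prop:uniquely_determined} work. Everything else is structural and follows once (i) and (ii) are in hand.
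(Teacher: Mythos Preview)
Your proposal is correct and follows essentially the same route as the paper: both derive (i)--(ii) by propagating the property along the inductive scheme of \Cref{prop:uniquely_determined} (splitting into the non-Zeno states, handled directly, and the Zeno states, handled via the implicit fixed-point system), obtain (iii) from (i)--(ii) by the Crandall--Tartar sandwich, and get (iv) from nonexpansiveness plus the observation that $\Sg_t[v^0](s)$ depends on only finitely many values of $v^0$.

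Two small remarks. First, your phrasing ``induction along the sequence of breakpoints at which $v$ or $v'$ may jump'' is slightly off: elements of $\Vcal$ are arbitrary bounded functions and need not have finitely many discontinuities; the paper's induction is simply over intervals $[k\tmin,(k+1)\tmin)$, which is what the scheme behind \Cref{prop:uniquely_determined} actually gives you. Second, for the Zeno block in (ii) the paper uses the explicit closed form $v_Z(t)=\min\bigl(\min_\sigma (I-P^\sigma|_{S_Z\times S_Z})^{-1}c^\sigma_t,\,w_t\bigr)$ and the order-preservation of $(I-P^\sigma|_{S_Z\times S_Z})^{-1}$, whereas you invoke monotone iteration of the contracting operator $\varphi_t$; both arguments are valid and essentially equivalent under Assumption~\ref{ass:nonzeno}.
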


The following proposition states the existence of an affine stationary regime, \ie, a particular initial condition function of $\Vcal$ for which evolution under~\eqref{eq:SMDP_finite_horizon} amounts to a translation in time.

\begin{proposition}[\cite{allamigeon2020piecewise}]\label{prop:affine1}
Suppose that Assumption~\ref{ass:nonzeno} holds. Then,
\begin{enumerate}[label=(\roman*)]
  \item\label{prop:affine1:kohlberg} there exists two vectors $\chi(\Sg)$ and $h$ in $\R^{|S|}$ such that the affine function $v^{\textrm{aff}}$ of $\Vcal$ defined by $v^{\textrm{aff}}:s\mapsto \chi(\Sg) s + h$ satisfies
    \( \Sg_t[ v^{\textrm{aff}}](s) = v^{\textrm{aff}}(t+s)\);
  \item\label{prop:affine1:universality} for all $v^0$ in $\Vcal$, we have for all $s$ in $[-\tmax, 0)$,
  \( \Sg_t[v^0](s) \underset{t\to\infty}{=} \chi(\Sg) t + O(1)\).
In particular, we have $v^*(t) \underset{t\to\infty}{=} \chi(\Sg) t + O(1)$.
\end{enumerate}
\end{proposition}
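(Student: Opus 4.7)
My plan is to derive part~(i) from the classical multichain ergodic theorem for semi-Markov decision processes, and to deduce part~(ii) immediately from (i) via the nonexpansivity of the semigroup.

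For part~(i), the approach is to substitute the ansatz $v^{\textrm{aff}}(s) = \chi s + h$ into~\eqref{eq:SMDP_finite_horizon}. This yields, componentwise,
\[
 \chi_i t + h_i \;=\; \min_{a \in A_i}\bigl\{ c^a + (P^a h)_i - t^a (P^a \chi)_i + t\,(P^a \chi)_i \bigr\},
\]
which is a minimum of finitely many affine functions of $t$. For it to coincide with a single affine function on $[0,\infty)$, one needs a policy $\sigma^\ast \in \SSigma$ attaining the minimum uniformly in $t$, and satisfying $(P^{\sigma^\ast(i)} \chi)_i = \chi_i$ (matching slopes) and $h_i = c^{\sigma^\ast(i)} - t^{\sigma^\ast(i)} \chi_i + (P^{\sigma^\ast(i)} h)_i$ (matching intercepts), together with the usual inequality ensuring $\sigma^\ast$ is optimal. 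The existence of such a triple $(\chi, h, \sigma^\ast)$ is precisely the multichain Bellman--Poisson equation for the undiscounted SMDP. I would obtain it via the \emph{vanishing discount} method: for every $\alpha > 0$ the discounted dynamic-programming operator is a contraction and hence admits a unique fixed point $v^*_\alpha$; one then sets $\chi(\Sg) := \lim_{\alpha \to 0^+} \alpha v^*_\alpha$ and $h := \lim_{\alpha\to 0^+}(v^*_\alpha - \chi(\Sg)/\alpha)$, using~\Cref{ass:nonzeno} to rule out degeneracies due to zero-sojourn actions. Once $(\chi, h)$ solves the ergodic equations, the identity $\Sg_t[v^{\textrm{aff}}](s) = v^{\textrm{aff}}(t+s)$ follows from the uniqueness in~\Cref{prop:uniquely_determined}.

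Part~(ii) is then an immediate corollary. The function $v^{\textrm{aff}}$ is bounded on $[-\tmax, 0)$, so $C := \|v^0 - v^{\textrm{aff}}\|_\infty$ is finite for any $v^0 \in \Vcal$. Nonexpansivity (\Cref{prop:properties_semigroup}(iii)) gives
\[
 \bigl\|\Sg_t[v^0] - \Sg_t[v^{\textrm{aff}}]\bigr\|_\infty \;\leq\; C \qquad \text{for all } t \geq 0,
\]
while part~(i) ensures $\Sg_t[v^{\textrm{aff}}](s) = \chi(\Sg)(t+s) + h$. Hence $\Sg_t[v^0](s) = \chi(\Sg)\,t + O(1)$ uniformly in $s \in [-\tmax, 0)$. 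Applying this to $v^0 = \tilde{\zerobf}$ and using the identity $v^*(t) = \Sg_{t+\varepsilon}[\tilde{\zerobf}](-\varepsilon)$ noted just after~\Cref{def:semigroup} yields the claim on $v^*$.

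The main obstacle is part~(i). Although $\Sg_t$ is monotone, additively homogeneous, and nonexpansive, the space $\Vcal$ is infinite-dimensional, so Kohlberg's finite-dimensional theorem on piecewise-affine self-maps does not apply verbatim. To overcome this, one exploits the piecewise-constant structure of trajectories (outputs of $\Sg_t$ on piecewise-constant inputs live in a finite-dimensional subspace after a refined grid subdivision) to reduce to a finite-dimensional Perron--Frobenius problem in the spirit of~\cite{akian2011collatz,akian2019solving}; alternatively, one must control the vanishing-discount limit carefully in the semi-Markov setting with null sojourn times, which is the delicate generalization of the classical average-cost multichain theory and is where~\Cref{ass:nonzeno} is essential.
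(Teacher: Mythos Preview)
The paper does not supply its own proof of this proposition: it is quoted from~\cite{allamigeon2020piecewise}, and no argument for it appears in the body or in the appendix. So there is nothing in the present paper to compare your attempt against directly. (The very label \texttt{prop:affine1:kohlberg} on item~(i) is a strong hint that the cited proof goes through Kohlberg's invariant-half-line theorem.)

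On the substance of your proposal: the derivation of part~(ii) from part~(i) via the nonexpansiveness of~\Cref{prop:properties_semigroup}(iii) is correct and is the standard route.

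For part~(i), your primary line---the vanishing-discount method---has a real gap in the multichain situation you are in. The limit $\lim_{\alpha\to 0^+}\alpha v^*_\alpha$ can be justified, but the second limit $\lim_{\alpha\to 0^+}(v^*_\alpha-\chi/\alpha)$ generally fails to exist without a unichain-type assumption; one only obtains subsequential limits, and those need not solve the nested Bellman--Poisson system. Even when a pair $(\chi,h)$ solving the nested system is in hand, an additional step is needed: the second nested equation constrains $h$ only against actions that are optimal for~$\chi$, whereas the identity $\Sg_t[v^{\mathrm{aff}}](s)=v^{\mathrm{aff}}(t+s)$ for \emph{all} $t\geq 0$ requires the inequality $h_i\leq c^a-t^a(P^a\chi)_i+(P^ah)_i$ for every action $a$. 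One fixes this by replacing $h$ with $h+K\chi$ for $K$ large, but that argument is missing from your sketch.

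The approach that works cleanly---and that you yourself outline in your final paragraph---is the one used in~\cite{allamigeon2020piecewise}: reduce to a finite-dimensional, piecewise-affine, nonexpansive self-map (sample on a grid adapted to the sojourn times, using that~\Cref{prop:uniquely_determined} preserves piecewise-affine inputs) and invoke Kohlberg's theorem directly. This avoids the vanishing-discount limits altogether. Your instinct to flag this alternative is right; it should be the main argument, not the fallback.
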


As it appears in item~\ref{prop:affine1:universality} above, the \emph{growth rate} $\lim_{t\to\infty}\Sg_t[v^0]/t$ is independent of the choice of $v^0$ in $\Vcal$ and relates only to the topology and the parameters of the SMDP (or its evolution semi-group $\Sg$), hence the notation $\chi(\Sg)$. This vector is none other than the solution of the minimal time-average cost problem associated with our SMDP, and the following proposition provides an explicit formula to compute it  in terms of the policies and the associated Markov chains, see also~\cite{SchweitzerFedergruenEquation}: 
\begin{proposition}\label{prop:affine2}Suppose that Assumption~\ref{ass:nonzeno} holds. Then, the vector $\chi(\Sg)$ featured in~\Cref{prop:affine1} is unique and we have for all $i$ in $S$
  \begin{equation}\label{eq:def_g}
    \chi(\Sg)_i = \min_{\sigma\in\SSigma}\sum_{F\in\mathcal{F}(\sigma)}\phi^F_i\,\frac{\langle \mu^{\sigma}_F, c^{\sigma}\rangle}{\langle \mu^{\sigma}_F, t^{\sigma}\rangle}\,,
  \end{equation}
  where for a policy $\sigma$ in $\SSigma$, $\mathcal{F}(\sigma)$ denotes the set of final classes of the Markov chain induced by $\sigma$, and for a final class $F$ in $\mathcal{F}(\sigma)$, $\mu^{\sigma}_F$ (resp.\ $\phi^F_i$) denotes the invariant nonnegative measure of class $F$ (resp.\ the probability to reach $F$ starting from $i$ under $\sigma$). 
\end{proposition}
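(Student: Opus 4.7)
The plan is to prove uniqueness first, then derive the formula by freezing policies and invoking a semi-Markov ergodic/renewal-reward argument, and finally establish the minimum is attained by a stationary policy chosen from the optimality equation for the affine regime.

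Uniqueness is immediate from \Cref{prop:affine1}\ref{prop:affine1:universality}: dividing $\Sg_t[v^0](s)=\chi(\Sg)t+O(1)$ by $t$ and letting $t\to\infty$ shows that $\chi(\Sg)=\lim_{t\to\infty}\Sg_t[v^0]/t$ coordinate-wise, a quantity independent of $v^0\in\Vcal$. In particular any two affine stationary regimes must share the same slope, so $\chi(\Sg)$ is determined by the semi-group.

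For the formula, I would first evaluate a single stationary policy $\sigma\in\SSigma$. Freezing the action at each state removes the minimum in~\eqref{eq:SMDP_finite_horizon}, replacing $\Sg_t$ by a linear (affine) semi-group $\Sg^{\sigma}_t$ whose ingredients are $P^{\sigma}$, $c^{\sigma}$ and $t^{\sigma}$. The chain $P^{\sigma}$ decomposes $S$ into transient states plus the final classes $F\in\mathcal{F}(\sigma)$, with absorption probabilities $\phi^F_i$. Assumption~\ref{ass:nonzeno} guarantees $\langle\mu^{\sigma}_F,t^{\sigma}\rangle>0$ for every $F$, so on each absorbing class the standard semi-Markov renewal-reward theorem (as in~\cite{SchweitzerFedergruenEquation,Schweitzer1978a}) yields an asymptotic cost rate equal to $\langle\mu^{\sigma}_F,c^{\sigma}\rangle/\langle\mu^{\sigma}_F,t^{\sigma}\rangle$. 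Weighting by the absorption probabilities from state $i$ and appealing again to \Cref{prop:affine1}\ref{prop:affine1:universality} applied to $\Sg^{\sigma}$, this gives
\[
\chi(\Sg^{\sigma})_i \;=\; \sum_{F\in\mathcal{F}(\sigma)} \phi^F_i\,\frac{\langle \mu^{\sigma}_F, c^{\sigma}\rangle}{\langle \mu^{\sigma}_F, t^{\sigma}\rangle}\,.
\]

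Next I would prove $\chi(\Sg)_i=\min_{\sigma}\chi(\Sg^{\sigma})_i$. The inequality $\chi(\Sg)\leq\chi(\Sg^{\sigma})$ for every $\sigma$ follows by applying the monotonicity of $\Sg_t$ (\Cref{prop:properties_semigroup}(ii)) to the pointwise bound $\Sg_t[v^0]\leq\Sg^{\sigma}_t[v^0]$ and passing to the limit after dividing by $t$. For the reverse inequality, I would select, in each state $i$, an action $\sigma^{*}(i)\in A_i$ attaining the minimum in the stationary Bellman equation corresponding to the affine regime $v^{\mathrm{aff}}(s)=\chi(\Sg)s+h$ provided by \Cref{prop:affine1}\ref{prop:affine1:kohlberg}; one then verifies $\Sg^{\sigma^{*}}_t[v^{\mathrm{aff}}](s)=v^{\mathrm{aff}}(t+s)$, so that $\chi(\Sg^{\sigma^{*}})=\chi(\Sg)$ by the uniqueness part. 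Hence the minimum is attained by $\sigma^{*}$, which yields the stated formula.

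The main obstacle will be the zero-sojourn-time case: the renewal-reward step requires the underlying renewal process to be non-degenerate on each recurrence class, and the action selection for $\sigma^{*}$ must avoid deterministic cycles of null total sojourn time. Both issues are neutralized precisely by Assumption~\ref{ass:nonzeno}, which guarantees $\langle\mu^{\sigma}_F,t^{\sigma}\rangle>0$ on every final class, so that all ratios appearing in the formula are well defined and the semi-Markov ergodic theorem applies without modification.
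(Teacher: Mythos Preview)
The paper does not supply its own proof of this proposition; it is stated as known, with a pointer to Schweitzer--Federgruen~\cite{SchweitzerFedergruenEquation}, and no argument appears in the appendix. Your sketch is the standard derivation and is correct: renewal-reward on each final class gives $\chi(\Sg^{\sigma})_i$, the pointwise bound $\Sg_t[v^0]\leq\Sg^{\sigma}_t[v^0]$ yields $\chi(\Sg)\leq\chi(\Sg^{\sigma})$ after dividing by $t$, and selecting an argmin action in the Bellman equation of the affine regime $v^{\mathrm{aff}}$ produces a policy $\sigma^{*}$ for which $v^{\mathrm{aff}}$ is also stationary, hence $\chi(\Sg^{\sigma^{*}})=\chi(\Sg)$.

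One small point worth making explicit is that an action realizing the minimum at a single instant $t_0>0$ in fact realizes it for all $t\geq 0$: the difference between the candidate affine function and $t\mapsto\chi(\Sg)_i t+h_i$ is affine, nonnegative on $[0,\infty)$, and vanishes at an interior point, hence is identically zero. This is what guarantees that $\sigma^{*}$ can be chosen time-independent. The zero-sojourn-time caveat you raise is genuine both for the inductive comparison $\Sg_t[v^0]\leq\Sg^{\sigma}_t[v^0]$ and for the well-posedness of the argmin selection, and it is resolved, as you note, by Assumption~\ref{ass:nonzeno} along the lines of the proof of \Cref{prop:uniquely_determined}.
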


Observe that the conditions of the non-Zeno Assumption~\ref{ass:nonzeno} are precisely those which make $\chi(\Sg)$ well-defined in~\Cref{prop:affine2}.

\begin{remark}\label{remark:reduced_semigroup}
  It shall turn convenient in~\Cref{sec:hierarchical} to consider an SMDP induced by a subset of actions. This may also require to restrict ourselves to an adequate subset of states. To that purpose,
  we say that a pair $(S',A'\coloneqq \biguplus_{i\in S'}A_i')$ with $S'\subset S$ and $\emptyset\neq A_i'\subset A_i$ for all $i$ in $S$ is a \emph{consistent subset of states and actions} if for all $a\in A'$, we have $\sum_{j\in S'}p^a_j = 1$.
  
  If $(S',A')$ is such a pair, the equations~\eqref{eq:SMDP_finite_horizon} restricted to actions of $A'$ and to states of $S'$ induce an evolution semi-group that we may denote by $\big(\Sg_t\big|_{S',\,A'}\big)_{t\geq 0}$ on the set of ``initial condition'' functions with suitable dimension, as set out in~\Cref{prop:uniquely_determined} and~\Cref{def:semigroup}. Any restricted semi-group of this form also enjoys the properties listed in~\Cref{prop:properties_semigroup}.  
\end{remark}

\section{Deviation of the value from the congestion-free regime}

\subsection{Reducing to a stochastic shortest path problem}

We deal with systems that have to meet a known inbound demand. In the framework of SMDP, such an input is conveniently modeled by distinguishing a sink state denoted by $0$, for which the value function $t\mapsto v^*(0,t)$ is prescribed for all $t\geq 0$.
We will assume that $v^*(0,t) = \lambda t$ for all $t\geq 0$, where $\lambda > 0$ is constant rate of tasks to perform per unit of time.
This input profile may be realized by our SMDP framework by first supposing that a single action $a_0$ is playable from state $0$ such that $p^{a_0}_0 = 1$, $t^{a_0} > 0$ and $c^{a_0}=\lambda t^{a_0}$, and also that the dynamics is initialized with a function of $\Vcal_{\lambda}\coloneqq \{v\in\Vcal \;|\; v(0,s) = \lambda s \;\;\text{for all}\;\;s\in[-\tmax,0)\}$. In what follows, we shall refer to this setting as \emph{$\lambda$-sink SMDPs}. 

By an abuse of language and notation, we shall still refer to the \emph{value function} and denote by $v^*$ the solution of the dynamics of a $\lambda$-sink SMDP uniquely determined by the initial condition $v^0$ in $\Vcal$ such that $v^0\big|_{\SminusZ} = \tilde{\zerobf}$. Indeed, this initial condition is not null on state $0$ on $[-\tmax,0)$, as required by~\Cref{thm:value_is_S0}, but the function it generates coincides on $[0,\infty)$ with the value function obtained by prescribing $v^*(0,t)=0$ for $t\in[-\tmax,0)$ and $v^*(0,t)=\lambda t$ for $t\geq 0$, provided that the actions giving access to state $0$ are equipped with null sojourn time, which involves no loss of generality up to considering intermediary states.

      \Cref{prop:affine1}
      and~\Cref{prop:affine2} show that the growth rate of the solutions of~\eqref{eq:SMDP_finite_horizon} is given by the one of an \emph{average-cost optimal} policy; the final classes of this policy indicate which parts of the system, by staying within them, enable the player to minimize his/her costs in the long run. In our emergency call center application, these final classes correspond to the slowest part of the treatment chain and therefore indicate which resources are bottleneck \textit{via} the computation of the growth rate. %
Observe that in a $\lambda$-sink SMDP, the singleton $\{0\}$ is a final class of all policies of $\SSigma$ and that $0$ must always have a growth-rate of $\lambda$.
We want to focus on ``congestion-free regimes'' of these SMDPs, \ie, for which the evolution under~\eqref{eq:SMDP_finite_horizon} is driven by the input, resulting in $\chi(\Sg)=\lambda \unbf$ (where $\unbf$ denotes the vector with all components set to $1$).
According to~\Cref{prop:affine2} and the previous remark, it is relevant to introduce the minimum possible growth-rate in the SMDP except $\lambda$, denoted by $\underline{\chi}$ and defined as
\begin{equation}\label{eq:chi_lower}\underline{\chi}\coloneqq \min_{\sigma\in\SSigma}\min_{\substack{F\in \mathcal{F}(\sigma) \\ F \neq \{0\}}}\left\{\frac{\langle \mu^{\sigma}_F, c^{\sigma}\rangle}{\langle \mu^{\sigma}_F, t^{\sigma}\rangle}\right\}\,.\end{equation}

It is therefore seen from~\Cref{prop:affine2} that $\chi(\Sg)=\lambda\unbf$ can occur only if $\underline{\chi} \geq \lambda$, \ie, the least growth-rate of the cost is achieved by accessing the sink state $0$; and imposing $\underline{\chi}>\lambda$ ensures that the less costly final class of all policies is always $\{0\}$. Conversely, we note that the value function of a state $i$ can be driven by the input only if this state has access to $0$, captured by the term $\phi^{\{0\}}_i$ in~\eqref{eq:def_g} (recall that for two states $i$ and $j$ of the SMDP, we say that $i$ \emph{has access to} $j$ iff there are states $i_0,i_1,\dots,i_k,i_{k+1}$ of $S$ with $i_0 = i$ and $i_{k+1}=j$, and actions $a_0,a_1,\dots,a_k$ of $A_{i_0}\times A_{i_1}\times\dots\times A_{i_k}$ such that $p^{a_{\ell}}_{i_{\ell+1}}>0$ for all $0\leq \ell \leq k$). This motivates the statement of the next assumption to characterize the congestion-free regimes of $\lambda$-sink SMDPs.

\begin{assumption}\label{ass:fluid_regime} Consider a $\lambda$-sink SMDP.
  \begin{enumerate}[label=(\arabic*)]
    \item\label{ass:fluid_regime:item1} All the states of $S$ have access to $0$,
    \item\label{ass:fluid_regime:item2} $\underline{\chi}>\lambda$.
  \end{enumerate}
  \end{assumption}

To investigate if a better result than the mere $v^*(t) \underset{t\to\infty}{=} \lambda t\unbf + O(1)$ can be obtained under Assumption~\ref{ass:fluid_regime}, we focus on the deviation $\Delta v^*(t)\coloneqq v^*(t)-\lambda t\unbf$. 
We remark that our setting bears much resemblance with the subclass of problems associated with MDPs known as \emph{Stochastic Shortest Path} (SSP) problems. An \emph{SSP configuration}
refers to a MDP in which there is a distinguished sink state denoted by $0$ such that any playable action from state $0$ has null cost and forces to stay in $0$ (for all $a\in A_0$, $c^a=0$ and $p^a_{0}=1$). It is therefore seen that as soon as one reaches state $0$, the accumulated cost no longer evolves and the game virtually stops; in this case it is licit to study the limit of the value function in~\eqref{eq:def_value} when $t$ tends to $\infty$.
We point out that the notion of SSP configuration carries over SMDPs. However, we are not aware of any study of the stochastic shortest path problem in the semi-Markov setting.

We now show that studying the deviation $\Delta v^*$ reduces to a SSP problem.
\todo{SG: terminology reduced cost introduced}
  \begin{theorem}\label{thm:normalization}
    Let $\Pcal$ be a $\lambda$-sink SMDP. The SMDP $\Pcal'$ obtained from $\Pcal$ by changing the costs to the {\em reduced costs} $c^a - \lambda t^a$ for all $a \in A$ is in SSP configuration. Moreover, $\Delta v^*$ satisfies the dynamic programming equations of $\Pcal'$, \ie, for all $t\geq 0$ and $i \in S$:
    \begin{equation}\label{eq:delta_SSP}
      \Delta v^*(i,t) =  \min_{a\in A_i} \bigg\{ \big(c^{a}-\lambda t^a\big) + \sum_{j\in S} p^{a}_{j}\, \Delta v^*(j, t-t^{a}) \bigg\} \enspace .
    \end{equation}
    \end{theorem}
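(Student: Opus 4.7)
The proof splits into two straightforward parts: (i) verifying that $\Pcal'$ really is in SSP configuration, and (ii) a direct algebraic manipulation of the dynamic programming equation satisfied by $v^*$.

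\textbf{Part (i): $\Pcal'$ is an SSP configuration.} By definition of a $\lambda$-sink SMDP, the state $0$ admits a single playable action $a_0$ with $p^{a_0}_0 = 1$ and $c^{a_0} = \lambda\, t^{a_0}$. The reduction procedure leaves transition probabilities and sojourn times unchanged, and replaces the cost of $a_0$ by $c^{a_0} - \lambda t^{a_0} = 0$. Therefore $0$ is still a sink state of $\Pcal'$, the single action $a_0$ available in $0$ now has null cost, and $p^{a_0}_0 = 1$ forces the system to remain in $0$. This is exactly the SSP configuration.

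\textbf{Part (ii): $\Delta v^*$ satisfies the SSP dynamic programming equation.} I would argue by a direct substitution. Fix $t \geq 0$ and $i\in S$. Applying \eqref{eq:SMDP_finite_horizon} to $v^*$ (which is legitimate by \Cref{thm:value_is_S0}) and writing $v^*(j,s) = \Delta v^*(j,s) + \lambda s$ gives
\begin{align*}
  v^*(i,t) &= \min_{a\in A_i}\Bigl\{ c^a + \sum_{j\in S} p^a_j\bigl(\Delta v^*(j,t-t^a) + \lambda(t-t^a)\bigr)\Bigr\} \\
           &= \min_{a\in A_i}\Bigl\{ c^a - \lambda t^a + \sum_{j\in S} p^a_j\,\Delta v^*(j,t-t^a)\Bigr\} + \lambda t,
\end{align*}
where in the second line I pull out the constant $\lambda(t-t^a)\sum_j p^a_j = \lambda(t-t^a)$ using the fact that the transition probabilities sum to $1$. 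Subtracting $\lambda t$ on both sides yields \eqref{eq:delta_SSP}.

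\textbf{Remarks on obstacles.} No serious obstacle is anticipated: the essential algebraic mechanism is the identity $\sum_j p^a_j = 1$, which lets additive shifts pass through the Bellman operator. The only subtlety worth double-checking is that the substitution is allowed when $t - t^a$ falls in $[-\tmax, 0)$: by the convention fixed just before the theorem, $v^*(0,s) = \lambda s$ and $v^*(j,s) = 0$ for $j \neq 0$ on that interval, so $\Delta v^*(\cdot, s)$ is well defined for $s\geq -\tmax$ and the manipulation remains valid across the boundary $s=0$. The identity $\sum_j p^a_j = 1$ is what guarantees that the shift $\lambda(t-t^a)$ does not depend on the chosen destination and can thus be isolated from the minimisation.
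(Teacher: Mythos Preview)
Your proof is correct and follows essentially the same route as the paper: both arguments substitute $v^*=\Delta v^*+\lambda t$ into \eqref{eq:SMDP_finite_horizon} and use $\sum_j p^a_j=1$ to pull $\lambda(t-t^a)$ outside the minimum. Your write-up is slightly more detailed (you explicitly check the SSP property at state $0$ and discuss the boundary interval $[-\tmax,0)$), but the core mechanism is identical.
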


    The key elements that make the SSP configuration arise under the cost reduction transformation featured in~\Cref{thm:normalization} are the fact that $0$ remains an absorbing sink state but in addition becomes cost-free.

\label{ssec:SSP}

The mere SSP configuration in terms of topology is not sufficient to obtain convergence of $\Delta v^*$. 
Instead, to ensure that the SSP problem is well-posed and that ultimate reachability of state $0$ is guaranteed, the notion of \emph{proper} policy is often introduced.

\begin{definition}
A policy $\sigma$ in $\SSigma$ is said \emph{proper} if for all $i$ in $S$, $\lim_{n\to\infty}\big[\big(P^{\sigma}\big)^n\big]_{i0} = 1$.
A non-proper policy is called \emph{improper}.\end{definition}

The next proposition links our setting of congestion-free regime for $\lambda$-sink SMDPs to the most standard assumptions made in the SSP literature, expressed in terms of proper and improper policies.
\begin{proposition}
\label{prop:ass_SSP_is_fluid_regime}
Let $\Pcal$ be a $\lambda$-sink SMDP. 
The two assumptions of~\ref{ass:fluid_regime} are equivalent to the following two conditions on the SMDP $\Pcal'$ in SSP configuration constructed in~\Cref{thm:normalization}:

\begin{enumerate}[label=(\arabic*')]
\item\label{prop:ass_SSP_is_fluid_regime:item1} There exists a proper policy.
\item\label{prop:ass_SSP_is_fluid_regime:item2} For every improper policy, the expectation of the accumulated reduced
  cost incurred up to time $t$ converges to $+\infty$ as $t\to\infty$,
  for at least one initial state.\todo{SG: I explained what it means, since the notation of infinite total reduced cost was undefined}
\end{enumerate}
\end{proposition}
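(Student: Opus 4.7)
The plan is to establish the two equivalences \ref{ass:fluid_regime:item1}$\Leftrightarrow$\ref{prop:ass_SSP_is_fluid_regime:item1} and \ref{ass:fluid_regime:item2}$\Leftrightarrow$\ref{prop:ass_SSP_is_fluid_regime:item2} separately. For the first equivalence, the direction \ref{prop:ass_SSP_is_fluid_regime:item1}$\Rightarrow$\ref{ass:fluid_regime:item1} is routine: if $\sigma$ is proper, then the Markov chain $P^\sigma$ drives every state almost surely to the absorbing sink $0$, which a fortiori means that every state has access to $0$ in $\Pcal$. For the converse, I would construct a proper policy explicitly. Let $d(i)$ be the minimum number of actions needed to reach $0$ with positive probability starting from $i$, which is finite for every $i$ by \ref{ass:fluid_regime:item1}. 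For each $i\neq 0$, pick an action $a_i\in A_i$ routing with positive probability to $\{j:d(j)<d(i)\}$, and set $\sigma(i)=a_i$. A standard geometric-tail argument then shows that the probability of not reaching $0$ within $kN$ steps is bounded by $(1-\varepsilon)^k$, where $N=\max_i d(i)$ and $\varepsilon>0$ depends only on the minimum positive transition probability; this vanishes as $k\to\infty$, so $\sigma$ is proper.

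For the second equivalence, the main tool is the per-policy growth-rate formula, obtained as a direct specialization of \Cref{prop:affine2} when the SMDP is restricted to the single policy $\sigma$ (using the consistent restriction device of \Cref{remark:reduced_semigroup} if needed): for any $\sigma\in\SSigma$ and $i\in S$, the expected accumulated reduced cost under $\sigma$ from $i$ grows asymptotically linearly at rate
\[
\chi'(\sigma)_i \;=\; \sum_{F\in\mathcal{F}(\sigma)} \phi^F_i\,\frac{\langle \mu^\sigma_F,\; c^\sigma-\lambda t^\sigma\rangle}{\langle \mu^\sigma_F,\; t^\sigma\rangle}\,.
\]
The crucial feature of the $\lambda$-sink structure is that the sole action $a_0$ available at $0$ has reduced cost $\lambda t^{a_0}-\lambda t^{a_0}=0$, so the class $F=\{0\}$ contributes zero to $\chi'(\sigma)_i$, and only final classes distinct from $\{0\}$ matter.

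With this identity in hand, \ref{ass:fluid_regime:item2}$\Rightarrow$\ref{prop:ass_SSP_is_fluid_regime:item2} is nearly immediate: if $\sigma$ is improper, $P^\sigma$ admits a final class $F\neq\{0\}$; taking any $i\in F$ gives $\phi^F_i=1$ and the growth rate collapses to $\frac{\langle \mu^\sigma_F, c^\sigma\rangle}{\langle \mu^\sigma_F, t^\sigma\rangle}-\lambda \geq \underline{\chi}-\lambda > 0$, forcing divergence to $+\infty$ from $i$. For the converse, I argue by contrapositive: if $\underline{\chi}\leq\lambda$, instantiate the minimizer $(\sigma,F)$ in the definition~\eqref{eq:chi_lower} of $\underline{\chi}$; then $\sigma$ is improper, and the same formula yields $\chi'(\sigma)_i=\underline{\chi}-\lambda\leq 0$ for $i\in F$, so the expected reduced cost from $i$ stays bounded above (or even tends to $-\infty$), violating \ref{prop:ass_SSP_is_fluid_regime:item2}.

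The main obstacle is the specialization of \Cref{prop:affine2} from the optimal growth rate $\chi(\Sg)$ to a per-policy growth rate $\chi'(\sigma)$ in $\Pcal'$, together with the verification that Assumption~\ref{ass:nonzeno} is inherited by $\Pcal'$ under the cost translation---so that $\chi'(\sigma)$ is indeed well defined for every $\sigma\in\SSigma$. Once this reduction is secured, both equivalences are essentially bookkeeping statements about the decomposition of a finite Markov chain into final classes, combined with the observation that $\{0\}$ is the unique reduced-cost-free final class of every policy in a $\lambda$-sink SMDP.
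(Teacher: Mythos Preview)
Your argument for \ref{ass:fluid_regime:item1}$\Leftrightarrow$\ref{prop:ass_SSP_is_fluid_regime:item1} and for \ref{ass:fluid_regime:item2}$\Rightarrow$\ref{prop:ass_SSP_is_fluid_regime:item2} is fine and matches the paper. The gap is in your contrapositive for \ref{prop:ass_SSP_is_fluid_regime:item2}$\Rightarrow$\ref{ass:fluid_regime:item2}. Condition~\ref{prop:ass_SSP_is_fluid_regime:item2} is a universal statement over improper policies with an existential over states, so its negation requires you to exhibit an improper policy under which \emph{no} initial state has reduced cost diverging to $+\infty$. You only show that the minimizer $\sigma$ of~\eqref{eq:chi_lower} has nonpositive growth rate from states $i\in F$; but $\sigma$ may have other final classes with strictly positive reduced-cost rate, so states feeding into those classes can still diverge under $\sigma$, and \ref{prop:ass_SSP_is_fluid_regime:item2} need not fail. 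In fact the standalone equivalence \ref{ass:fluid_regime:item2}$\Leftrightarrow$\ref{prop:ass_SSP_is_fluid_regime:item2} is false: take $S=\{0,1,2\}$ where states $1$ and $2$ each have a single self-looping action with unit sojourn time and costs $\lambda$ and $\lambda+1$ respectively. Then $\underline{\chi}=\lambda$, so \ref{ass:fluid_regime:item2} fails; yet the unique (improper) policy yields reduced cost $+\infty$ from state $2$, so \ref{prop:ass_SSP_is_fluid_regime:item2} holds.

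The paper avoids this by not attempting the separate equivalences: it proves \ref{ass:fluid_regime:item1}$\Leftrightarrow$\ref{prop:ass_SSP_is_fluid_regime:item1}, then \ref{ass:fluid_regime:item2}$\Rightarrow$\ref{prop:ass_SSP_is_fluid_regime:item2}, and finally \ref{prop:ass_SSP_is_fluid_regime:item1}$\wedge\neg$\ref{ass:fluid_regime:item2}$\Rightarrow\neg$\ref{prop:ass_SSP_is_fluid_regime:item2}. For the last implication one takes the minimizer $(\sigma_I,F)$ as you do, but then uses a proper policy $\sigma_P$ (supplied by \ref{prop:ass_SSP_is_fluid_regime:item1}) to define a new policy $\sigma$ agreeing with $\sigma_I$ on $F$ and with $\sigma_P$ elsewhere. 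This $\sigma$ is improper and has only $\{0\}$ and $F$ as final classes, both with nonpositive reduced-cost rate, so the per-policy growth formula now gives $\chi'(\sigma)_i\leq 0$ for \emph{every} $i\in S$, which genuinely violates \ref{prop:ass_SSP_is_fluid_regime:item2}. Your decomposition can be repaired by inserting exactly this construction, but you must explicitly invoke \ref{prop:ass_SSP_is_fluid_regime:item1} to do so.
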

\todo{SG: I explain better the link with BT}

This entails that the SMDP with reduced costs satisfies precisely
the condition \cite[Assumption 1]{bertsekas1991analysis}.

\begin{theorem}[Corollary of~\cite{bertsekas1991analysis}]\label{thm:bertsekas}
  Suppose that conditions~\ref{prop:ass_SSP_is_fluid_regime:item1} and~\ref{prop:ass_SSP_is_fluid_regime:item2} of~\Cref{prop:ass_SSP_is_fluid_regime} hold. Then, the equations 
  \begin{eqnarray}\label{eq:fixed_point_of_T}
      \forall i\in S\setminus\{0\},\quad u(i) &=& \displaystyle\min_{a\in A_i}\bigg\{ \big(c^a-\lambda t^a\big) + \sum_{j\in S} p^{a}_{j}\,u(j) \bigg\},\quad 
     \quad\;\; u(0)  =  0
\end{eqnarray}
      admit a unique solution in $\R^{|S|}$.%
\end{theorem}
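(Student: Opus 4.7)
The plan is to reduce the statement to the classical Markov stochastic shortest path theorem of Bertsekas and Tsitsiklis~\cite{bertsekas1991analysis}. The key observation is that the system~\eqref{eq:fixed_point_of_T} depends on the data of the SMDP only through the transition probabilities $(p^a_j)$ and the reduced costs $\tilde{c}^a \coloneqq c^a - \lambda t^a$; the sojourn times $t^a$ do not appear once these quantities have been formed. I therefore introduce the auxiliary \emph{discrete-time} MDP $\widetilde{\Pcal}$ with the same state space $S$ and the same action sets $A_i$ as $\Pcal'$, with transition kernel $p^a_j$ and with costs $\tilde{c}^a$. Because the sink action $a_0$ satisfies $p^{a_0}_0=1$ and $\tilde{c}^{a_0} = c^{a_0} - \lambda t^{a_0} = 0$, the MDP $\widetilde{\Pcal}$ is in SSP configuration, and~\eqref{eq:fixed_point_of_T} is precisely its Bellman optimality equation.

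It then suffices to verify Assumption~1 of~\cite{bertsekas1991analysis} for $\widetilde{\Pcal}$. The notions of proper and improper policy depend only on the underlying Markov chain $P^\sigma$, which is identical in $\Pcal'$ and $\widetilde{\Pcal}$; condition~\ref{prop:ass_SSP_is_fluid_regime:item1} of~\Cref{prop:ass_SSP_is_fluid_regime} therefore directly furnishes a proper policy. The delicate point is to turn condition~\ref{prop:ass_SSP_is_fluid_regime:item2} into the discrete-time statement that, for every improper policy, the expected \emph{total} reduced cost (summed over all decision epochs) is $+\infty$ at some initial state. Under~\Cref{ass:nonzeno}, the random epoch counter $\widehat{N_t}$ is nondecreasing in $t$ and tends almost surely to $+\infty$, so the partial sum $\sum_{k=0}^{\widehat{N_t}}\tilde{c}^{a_k}$ converges trajectory-wise to the total accumulated reduced cost. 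Combining this with a control of the negative part of the cost stream, so that monotone or dominated convergence can be applied to the positive and negative parts separately, yields the desired equivalence.

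Once both parts of~\cite[Assumption~1]{bertsekas1991analysis} are established for $\widetilde{\Pcal}$, their theorem yields existence and uniqueness in $\R^{|S|}$ of the solution of the Bellman equation of $\widetilde{\Pcal}$, which is exactly~\eqref{eq:fixed_point_of_T}. I expect the main obstacle to lie in the bridge just sketched: since the reduced costs $c^a-\lambda t^a$ may change sign (cheap but slow branches produce negative contributions), one cannot directly pass from ``expected reduced cost accumulated up to time $t$ diverges'' to ``expected total reduced cost is $+\infty$''. The cleanest way to control the negative contributions uniformly is to decompose the chain $P^\sigma$ associated with an improper policy into its transient and final classes, as in~\Cref{prop:affine2}, and to exploit item~\ref{ass:fluid_regime:item2} of~\Cref{ass:fluid_regime} to show that every recurrent class different from $\{0\}$ has a strictly positive per-step average reduced cost; the non-Zeno Assumption~\ref{ass:nonzeno} then guarantees that the expected cost picked up per unit time in those classes is bounded below by $\underline{\chi}-\lambda>0$, which forces the total cost to diverge to $+\infty$.
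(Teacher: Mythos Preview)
Your proposal is correct and follows the same route as the paper: both observe that~\eqref{eq:fixed_point_of_T} is insensitive to the sojourn times once the reduced costs $c^a-\lambda t^a$ are formed, identify it with the Bellman equation of an ordinary discrete-time SSP, and invoke~\cite{bertsekas1991analysis} directly. The paper in fact gives no separate proof of this theorem---it simply records that conditions~\ref{prop:ass_SSP_is_fluid_regime:item1} and~\ref{prop:ass_SSP_is_fluid_regime:item2} are ``precisely'' \cite[Assumption~1]{bertsekas1991analysis} and quotes the result---so the continuous-to-discrete bridge for~\ref{prop:ass_SSP_is_fluid_regime:item2} that you single out is a genuine detail the paper leaves implicit; your resolution via the final-class decomposition of $P^\sigma$ and the strict positivity of the per-cycle reduced cost $\langle\mu_F^\sigma,c^\sigma-\lambda t^\sigma\rangle$ (guaranteed by $\underline{\chi}>\lambda$ together with $\langle\mu_F^\sigma,t^\sigma\rangle>0$ from Assumption~\ref{ass:nonzeno}) is exactly the right way to make that step rigorous.
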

In what follows, if Assumption~\ref{ass:fluid_regime} is verified, we denote by $u^*$ the unique solution of equations~\eqref{eq:fixed_point_of_T}. Building on~\Cref{thm:bertsekas}, Bertsekas and Tsitsiklis show that the total cost of the SSP problem specified to the case of MDPs (when all the delays $(t^a)_{a\in A}$ are equal to $1$) coincides with the solution of~\eqref{eq:fixed_point_of_T}, and that it arises as the limit of iterates of the associated Bellman operator applied to any starting vector. %
We establish in the next theorem that these results carry over the semi-Markov framework, by considering the evolution semigroup $(\Sg_t^{\Delta})_{t\geq 0}$ associated with the reduced dynamic programming equations~\eqref{eq:delta_SSP}. The latter naturally acts on the set of initial conditions $\Vcal_0 \coloneqq \{v\in\Vcal \;|\; v_0(s) = 0 \;\;\text{for all}\;\;s\in[-\tmax,0)\}$. 

\begin{theorem}\label{thm:SSSP}
Suppose that Assumptions~\ref{ass:nonzeno} and~\ref{ass:fluid_regime} hold. Then,  %
for all $v^0$ in $\Vcal_0$, for all $s$ in $[-\tmax,0)$, we have 
\(\lim_{t\to\infty}\Sg_t^{\Delta}[v^0](s) = u^*\).%
\end{theorem}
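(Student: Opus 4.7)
The plan is to show first that the constant function equal to $u^*$ is a fixed point of the semi-group $\Sg_t^{\Delta}$, and then to deduce convergence of arbitrary trajectories to it. Let $\tilde{u^*}$ denote the constant function on $[-\tmax,0)$ equal to the vector $u^*$; it lies in $\Vcal_0$ since $u^*(0)=0$ by \Cref{thm:bertsekas}. Plugging the constant trajectory $t\mapsto u^*$ into the reduced dynamic programming equation~\eqref{eq:delta_SSP}, the fixed-point equations~\eqref{eq:fixed_point_of_T} confirm that this constant trajectory solves the equations on $[-\tmax,\infty)$. By the uniqueness of trajectories given by \Cref{prop:uniquely_determined}, this must coincide with the one generated from initial condition $\tilde{u^*}$, so $\Sg_t^{\Delta}[\tilde{u^*}]=\tilde{u^*}$ for every $t\geq 0$.

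To prove convergence, I would first reduce the problem to discrete time. Fix $t_0>0$ (e.g.\ $t_0=\tmax$) and consider the operator $T\coloneqq\Sg_{t_0}^{\Delta}$ acting on $\Vcal_0$; by the semi-group property, $T^n=\Sg_{nt_0}^{\Delta}$. The goal is to show $T^n[v^0]\to\tilde{u^*}$ in sup-norm for every $v^0\in\Vcal_0$; this is the semi-Markov counterpart of the value iteration convergence result of Bertsekas and Tsitsiklis for stochastic shortest paths. Their proof for MDPs hinges on (i) the existence of a proper policy and (ii) the divergence of the expected accumulated cost for improper policies on at least one initial state; by \Cref{prop:ass_SSP_is_fluid_regime}, these hold in our setting under Assumption~\ref{ass:fluid_regime}. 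Concretely, one expresses $(T^n[v^0])(s)$ as the infimum over strategies of an expected reduced cost accumulated over a horizon of length $nt_0+s$, splits this expectation according to whether the chain has reached the sink state $0$ by then, and uses (i)--(ii) to show that the non-absorbed contribution becomes negligible asymptotically. The limit is characterized as a solution of~\eqref{eq:fixed_point_of_T} and must therefore equal $u^*$ by the uniqueness part of \Cref{thm:bertsekas}.

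Once convergence along the arithmetic sequence $(nt_0)_{n\in\N}$ is secured, the passage to continuous time is immediate by nonexpansiveness (\Cref{prop:properties_semigroup}(iii)): writing $t=nt_0+r$ with $r\in[0,t_0)$, the semi-group property gives
\[\big\Vert \Sg_t^{\Delta}[v^0]-\tilde{u^*}\big\Vert_\infty = \big\Vert \Sg_r^{\Delta}\big(T^n[v^0]\big)-\Sg_r^{\Delta}[\tilde{u^*}]\big\Vert_\infty \leq \big\Vert T^n[v^0]-\tilde{u^*}\big\Vert_\infty \xrightarrow[n\to\infty]{}0,\]
which yields the claimed pointwise convergence on $[-\tmax,0)$. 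The main obstacle will be the transfer of the Bertsekas--Tsitsiklis argument from the MDP to the SMDP setting: either one directly constructs a weighted sup-norm (built from a proper policy) under which $T$ is a strict contraction, or one reduces to an equivalent MDP on an enlarged state space tracking residual sojourn times. Handling null sojourn times, permitted by Assumption~\ref{ass:nonzeno}, is the most technical point and is where the continuous-time subtleties of the SMDP framework intrude on the otherwise classical line of reasoning.
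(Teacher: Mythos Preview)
Your first step (the constant function $\tilde{u^*}$ is a fixed point of each $\Sg_t^\Delta$) and your last step (passing from the discrete subsequence to continuous time via nonexpansiveness) are both correct and also appear in the paper. The divergence is in the middle step, where you propose to port the Bertsekas--Tsitsiklis value-iteration convergence argument to the SMDP setting; you rightly flag this as the main obstacle and do not actually resolve it.

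The paper circumvents this obstacle with a sandwich argument that needs only the \emph{uniqueness} part of \Cref{thm:bertsekas}, not any convergence statement from that theory. Given $v^0\in\Vcal_0$, set $\delta\coloneqq\|v^0-\tilde{u^*}\|_\infty$ and let $\tilde{\Delta}\in\Vcal_0$ be the constant function equal to $\delta$ on $S\setminus\{0\}$ and to $0$ on state $0$. With $\underline v\coloneqq\tilde{u^*}-\tilde{\Delta}$ and $\overline v\coloneqq\tilde{u^*}+\tilde{\Delta}$ one has $\underline v\leq v^0\leq\overline v$. Nonexpansiveness (\Cref{prop:properties_semigroup}(iii)) combined with the fixed-point property of $\tilde{u^*}$ and the fact that all three functions agree on state $0$ gives $\underline v\leq\Sg_t^\Delta[\underline v]\leq\tilde{u^*}\leq\Sg_t^\Delta[\overline v]\leq\overline v$ for every $t\geq 0$. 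Hence $t\mapsto\Sg_t^\Delta[\underline v]$ is nondecreasing and bounded above, $t\mapsto\Sg_t^\Delta[\overline v]$ is nonincreasing and bounded below, and both converge pointwise. By continuity of the semigroup for the pointwise topology (\Cref{prop:properties_semigroup}(iv)), each limit is a fixed point of every $\Sg_t^\Delta$ in $\Vcal_0$, hence equals $\tilde{u^*}$ by the uniqueness inherited from \Cref{thm:bertsekas}. The squeeze $\Sg_t^\Delta[\underline v]\leq\Sg_t^\Delta[v^0]\leq\Sg_t^\Delta[\overline v]$ then finishes the proof.

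This route is shorter and uses nothing beyond the semigroup properties already on the table; in particular it handles null sojourn times for free. By contrast, your two suggested routes each require substantial extra work: the contraction route is delicate because improper policies are allowed here, so the one-step Bellman operator is in general not a strict contraction in any fixed weighted sup-norm, and the enlarged-state-space reduction must cope with the continuous range of residual sojourn times together with the zero-time transitions.
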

I
n other words, \Cref{thm:SSSP} states that the minimum ultimate total expected cost of an SMDP in SSP configuration (such as the decision process $\Pcal'$ featured in~\Cref{thm:normalization}) is the same than in a corresponding MDP with unit transition times.%
It is a consequence of the fact we study the infinite-horizon limit of a total reduced cost, which is a
time indifferent quantity. In particular, the optimality equality characterizing
the limit cost $u^*$ has precisely
the same form in the MDP and in the SMDP case,
the only change being that the delays $(t^a)_{a\in A}$ can take non unit values in the SMDP case.

We obtain a direct corollary of~\Cref{thm:SSSP} for the study of the value function, which indeed improves the result of~\Cref{prop:affine1} in the congestion-free regime. 
We insist on the importance of the first result for applications: whatever the initial condition, the trajectory of a non-Zeno $\lambda$-sink SMDP in congestion-free regime always ultimately catches up the input $t\mapsto \lambda t$, up to a constant delay $u^*$.

\begin{corollary}\label{coro:value_cv}
  For a $\lambda$-sink SMDP, if Assumptions~\ref{ass:nonzeno} and~\ref{ass:fluid_regime} hold, we have for all $v^0$ in $\Vcal_{\lambda}$ and $s$ in $[-\tmax,0)$:
  \( \Sg_t[v^0](s)\underset{t\to\infty}{=} \lambda (t+s)\unbf + u^* + o(1)\). %
   In particular,
   \( v^*(t) \underset{t\to\infty}{=} \lambda t\unbf + u^* + o(1)\).%
\end{corollary}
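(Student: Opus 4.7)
\medskip

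\noindent\textbf{Proof plan for \Cref{coro:value_cv}.} My plan is to reduce the statement to \Cref{thm:SSSP} by a ``change of variables'' that subtracts the affine congestion-free trend $\lambda t \unbf$ from any trajectory, analogous to the passage used in \Cref{thm:normalization} to deal with $v^*$.

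First, given $v^0\in\Vcal_\lambda$, I will set $w^0(s) := v^0(s) - \lambda s\,\unbf$ for $s\in[-\tmax,0)$. Since $v^0(0,s)=\lambda s$ by the definition of $\Vcal_\lambda$, the component of $w^0$ at state $0$ vanishes identically on $[-\tmax,0)$, so $w^0\in\Vcal_0$. Next, I let $v$ denote the unique trajectory on $[-\tmax,\infty)$ associated with $v^0$ by \Cref{prop:uniquely_determined}, and set $w(t):=v(t)-\lambda t\,\unbf$. A direct substitution into \eqref{eq:SMDP_finite_horizon}, using $\sum_{j\in S} p^a_j = 1$, yields
\[
   w(i,t) \;=\; \min_{a\in A_i}\Big\{\,(c^a-\lambda t^a) + \sum_{j\in S} p^a_j \, w(j,t-t^a)\,\Big\},
\]
which is exactly \eqref{eq:delta_SSP}. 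Since $w$ coincides with $w^0$ on $[-\tmax,0)$, the uniqueness part of \Cref{prop:uniquely_determined} applied to the reduced SMDP $\Pcal'$ of \Cref{thm:normalization} forces $w(t+s) = \Sg^{\Delta}_t[w^0](s)$ for all $t\geq 0$ and $s\in[-\tmax,0)$.

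I can then invoke \Cref{thm:SSSP}: under Assumptions \ref{ass:nonzeno} and \ref{ass:fluid_regime}, we have $\Sg^{\Delta}_t[w^0](s)\to u^*$ as $t\to\infty$ for every $s\in[-\tmax,0)$. Unwinding the change of variables gives
\[
   \Sg_t[v^0](s) \;=\; v(t+s) \;=\; w(t+s) + \lambda(t+s)\,\unbf \;=\; \lambda(t+s)\,\unbf + u^* + o(1),
\]
which is the first assertion. For the ``in particular'' part, I will use the convention discussed below \Cref{thm:value_is_S0}, namely that $v^*$ is the trajectory associated with the initial condition $v^0\in\Vcal_\lambda$ that is null on $S\setminus\{0\}$ and equals $\lambda s$ on state $0$; applying the previous equality to this specific $v^0$ and re-parametrising by $\tau := t+s$ yields $v^*(\tau) = \lambda\tau\,\unbf + u^* + o(1)$ as $\tau\to\infty$.

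The only mildly delicate step is the verification that the shifted trajectory $w$ really is the one generated by the reduced semi-group $(\Sg^{\Delta}_t)_{t\geq 0}$ acting on $w^0\in\Vcal_0$. Once the non-Zeno \Cref{ass:nonzeno} is transported to $\Pcal'$ (which is immediate, since sojourn times are unchanged by the cost reduction of \Cref{thm:normalization}), the uniqueness afforded by \Cref{prop:uniquely_determined} closes the argument with no further analytic work; everything else is algebraic manipulation plus a direct appeal to \Cref{thm:SSSP}.
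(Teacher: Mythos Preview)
Your proof is correct and is precisely the argument the paper has in mind: the corollary is stated as a ``direct corollary'' of \Cref{thm:SSSP}, obtained by the same affine change of variables $w(t)=v(t)-\lambda t\,\unbf$ underlying \Cref{thm:normalization}, applied now to an arbitrary initial condition $v^0\in\Vcal_\lambda$ rather than to $v^*$ alone. The only point worth noting is that the paper does not spell out this reduction for general $v^0$, so your explicit verification that $w^0\in\Vcal_0$ and that $w$ is the $\Sg^\Delta$-trajectory of $w^0$ fills in exactly the detail a reader would need.
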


\paragraph{\textnormal{\textbf{Example}}} We come back to our running example of a medical emergency call center, and we first acknowledge that the SMDP depicted in~\Cref{fig:EMS_A_SMDP} is indeed a $\lambda$-sink SMDP. It is easy to verify in this figure that all the states have access to $0$ and thus Assumption~\ref{ass:fluid_regime}-\ref{ass:fluid_regime:item1} is satisfied. The computation of the scalar $\underline{\chi}$, using closed forms of the probabilistic invariants associated with the final classes of the four policies of the SMDP, provides $\underline{\chi} = \lambda_A \wedge \lambda_P$, where 
\[ \lambda_A\coloneqq \frac{N_A}{t_1+\pi t_2} \qquad \text{and}\qquad \lambda_P \coloneqq \frac{N_P}{\pi(t_2+t_3)}\,.\]
These two quantities are fundamental in the analysis of the SMDP behavior and closely tied to real organization. The first one, $\lambda_A$, can be interpreted by the calls handling speed of MRAs, indeed there are $N_A$ of them and a fraction $1-\pi$ (resp. $\pi$) of their work is to perform tasks 1 and 2 (resp. 1, 2, 3 and 4) which consumes a time $t_1$ (resp. $t_1+t_2$), hence accounting for an average cycle time of $t_1+\pi t_2$. The quantity $\lambda_P$ can similarly be interpreted as the handling speed of the $N_P$ emergency physicians, having a cycle time of $t_2+t_3$ (to achieve tasks 3, 4 and 5) for a fraction $\pi$ of all the calls. Depending on the choice of $N_A$ and $N_P$ (but also of the other parameters), there may or may not be enough agents (whether MRAs or doctors) to perform without delay all the tasks arriving with flow $\lambda$. 

The medical emergency call center that we model by means of this SMDP thus behaves in its congestion-free regime if and only if Assumption~\ref{ass:fluid_regime}-\ref{ass:fluid_regime:item2} is met, i.e., if $\lambda_A>\lambda$ and $\lambda_P>\lambda$. In this case, the call center is correctly staffed and there are enough agents to ultimately pick up all the calls with no delay, \ie, all the functions $t\mapsto z_i(t)$ in~\eqref{SAMU1} admit their maximum throughput $\lambda e_i$. The policy that selects $(\sigma(1),\sigma(3))=(a_1^-,a_3^-)$ is the one that achieves minimal throughput (only the calls arrival itself is bottleneck). It is associated with the unique final class $\{0\}$ and an affine stationary regime $t\mapsto \lambda t \unbf + u^*$, where $u^* = -\lambda(0, 0, t_1, t_1, t_1+t_2, t_1+t_2+t_3)$. As shown in~\Cref{coro:value_cv}, thanks to the sufficient staffing, this congestion-free regime where all calls are handled with no delays is always ultimately reached.

If $\lambda_A < \lambda$ or $\lambda_P<\lambda$, this means that there are either not enough MRAs or not enough emergency physicians to perform all the required tasks, and due to the synchronization step (\textsc{Task} 3), the whole chain of treatment is slowed down. Limit cases ($\lambda_A=\lambda$ or $\lambda_P=\lambda$) would require a more detailed analysis.

\subsection{Transience time needed to catch-up the input}

In the congestion-free regime, the function $w:t\mapsto \lambda t \unbf + u^*$ of $\Vcal_{\lambda}$ featured in~\Cref{coro:value_cv} gives rise to an affine stationary regime in the sense that for all $t\geq 0$ and $s$ in $[-\tmax,0)$, we have $\Sg_t[w](s) = w(t+s)$ with no error term, like introduced in~\Cref{prop:affine1}-\ref{prop:affine1:kohlberg}; any such regime actually differs from $w$ by a multiple of $\tilde{\unbf}$. 

We leverage on the previous framework to study the effect of a perturbation on a steady-flow input in a $\lambda$-sink SMDP.
For sake of simplicity, suppose that the system is initialized in its stationary affine congestion-free regime $w$ (this entails little loss of generality since this behavior is always ultimately reached according to~\Cref{coro:value_cv}). We suppose that at $t=\underline{t}$, the input incurs a step of cost, so that $v(0,t) = \lambda t  + M \tilde{H}(t-\underline{t})$ for all $t\geq 0$, where $M \geq 0$ and $\tilde{H}$ denotes the Heaviside function. In our emergency call center application, this can be used to simulate the sudden arrival at time $\underline{t}$ of $M$ new calls to handle on top of the usual demand with rate $\lambda$. The next result formalizes the fact that these requirements on $v(0,\cdot)$ actually define a unique trajectory of the $\lambda$-sink SMDP.%
\begin{proposition}
  \label{prop:uniquely_determined_heaviside}
  Under Assumption~\ref{ass:nonzeno}, there is a unique function $v \colon [-\tmax,\infty)\to\R^{|S|}$ such that \begin{enumerate}[label=--]
    \item for all $t$ in $[-\tmax,\infty)$, $v(0,t) = \lambda t + M\tilde{H}(t-\underline{t})$, 
    \item for all $t$ in $[-\tmax,0)$, $v\big|_{\SminusZ}=\tilde{\zerobf}$, 
    \item $v$ satisfies equations~\eqref{eq:SMDP_finite_horizon} for states in $\SminusZ$. 
  \end{enumerate}

  Moreover, if Assumption~\ref{ass:fluid_regime} holds, we have \(v(t) \underset{t\to\infty}{=} \lambda t \unbf + u^* + M\unbf + o(1)\).%
\end{proposition}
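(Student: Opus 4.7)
The plan has two parts, mirroring the two assertions of the proposition.

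\textbf{Existence and uniqueness.} The setup differs from~\Cref{prop:uniquely_determined} only in that the coordinate $v(0,\cdot)$ is prescribed on the whole interval $[-\tmax,\infty)$, while the DP equation~\eqref{eq:SMDP_finite_horizon} is now imposed on $\SminusZ$ only. This suppresses the contribution of the action $a_0$ in the fixed-point argument and in fact makes the forward propagation easier than in~\Cref{prop:uniquely_determined}: on every finite horizon, the equations restricted to $\SminusZ$ form a monotone, nonexpansive system in the unknowns $\{v(i,t):i\in\SminusZ\}$, driven by the already-known boundary values $v(0,\cdot)$ and by the values $v(j,t')$ for $j\in\SminusZ$ and $t'$ strictly less than $t$ along any action with positive sojourn time. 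Resolution of the null-sojourn-time couplings within each ``layer'' proceeds exactly as in~\Cref{prop:uniquely_determined}, invoking Assumption~\ref{ass:nonzeno}.

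\textbf{Asymptotic behavior.} I would reduce to~\Cref{coro:value_cv} via a translation. Set $w(t)\coloneqq v(t)-M\unbf$ on $[\underline{t},\infty)$. By additive homogeneity (\Cref{prop:properties_semigroup}(i)), applied after the substitution $v=w+M\unbf$ in the right-hand side of~\eqref{eq:SMDP_finite_horizon}, the function $w$ satisfies~\eqref{eq:SMDP_finite_horizon} on $\SminusZ$ for every $t\geq \underline{t}+\tmax$, since then every delayed argument $t-t^a$ lies in $[\underline{t},\infty)$ where the translation is valid. Moreover $w(0,t)=\lambda t+M-M=\lambda t$ for $t\geq\underline{t}$, so $w$ also satisfies the DP equation at state $0$ for $t\geq \underline{t}+\tmax$. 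Hence, from time $\underline{t}+\tmax$ on, $w$ is a trajectory of the standard $\lambda$-sink dynamics.

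To invoke~\Cref{coro:value_cv} I would then shift time and use additive homogeneity once more. Define $\hat w(\tau)\coloneqq w(\tau+\underline{t}+\tmax)-\lambda(\underline{t}+\tmax)\unbf$ for $\tau\geq -\tmax$. By shift-invariance of the autonomous equations combined with additive homogeneity, $\hat w$ satisfies~\eqref{eq:SMDP_finite_horizon} for $\tau\geq 0$; and its restriction to $[-\tmax,0)$ lies in $\Vcal_\lambda$, since $\hat w(0,s)=\lambda(s+\underline{t}+\tmax)-\lambda(\underline{t}+\tmax)=\lambda s$. \Cref{coro:value_cv} then yields $\hat w(\tau)=\lambda\tau\unbf+u^*+o(1)$ as $\tau\to\infty$. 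Unwinding the two translations gives $v(t)=\lambda t\unbf+u^*+M\unbf+o(1)$, as announced.

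The main technical friction, in my view, lies in the bookkeeping around $\underline{t}$: one must verify that the Heaviside jump in the boundary datum does not break~\eqref{eq:SMDP_finite_horizon} on $\SminusZ$ for $t\geq\underline{t}+\tmax$, and that no chain of zero-sojourn actions propagates the discontinuity in a way that would prevent $\hat w\big|_{[-\tmax,0)}$ from being a bounded (hence valid) initial condition in~$\Vcal_\lambda$. Assumption~\ref{ass:nonzeno} precisely rules out such pathological chains, so the construction goes through, but this is the point that deserves a formal check, perhaps by piecewise-constant/c\`adl\`ag regularity preservation, analogous to the last claim of~\Cref{prop:uniquely_determined}.
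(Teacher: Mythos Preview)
Your argument is correct and follows essentially the same route as the paper: both parts reduce to prior results---uniqueness to the inductive scheme of~\Cref{prop:uniquely_determined}, and the asymptotics to~\Cref{coro:value_cv} via additive homogeneity and a time shift. The only cosmetic difference is that the paper exploits the standing convention that actions reaching state~$0$ have zero sojourn time, which lets it restart the semigroup exactly at~$\underline{t}$ (writing $v(t+s)=\Sg_{t-\underline t}[r\mapsto w(r)-M\unbm_{\SminusZ}](s)+(\lambda\underline t+M)\unbf$), whereas you sidestep that convention by waiting an extra $\tmax$ before invoking~\Cref{coro:value_cv}; both are sound and yield the same conclusion.
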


Building on the last statement of~\Cref{prop:uniquely_determined_heaviside}, which guarantees that in spite of the perturbation, the trajectory of the system still catches up the input (including the step of magnitude $M$), we want to study the transience time before this final regime is reached. To this purpose, we define for the state-by-state catch-up times (or transience times) $(\theta_i)_{i\in S}$ by:
\[ \forall i \in S,\quad \theta_i\coloneqq \inf\big\{ t\geq \underline{t} \;\big|\; v(i,t) = \lambda t + u^*(i) + M\big\}\,.\]

In~\Cref{sec:convergence}, we shall study cases where the catching-up is exact and thus occurs in finite time; but we may as well have defined the $(\theta_i)_{i\in S}$ by the time needed for $v(i,t)$ to approach $\lambda t + u^*(i) + M$ up to a chosen precision $\varepsilon$.

The next theorem shows that the problem of characterizing the $(\theta_i)_{i\in S}$ reduces to the study of catch-up times in an SMDP in SSP configuration starting from a particular initial condition.
\begin{theorem}\label{thm:reduction_SSP_2}
  Suppose that Assumptions~\ref{ass:nonzeno} and~\ref{ass:fluid_regime} hold. Let $v'$ be the function uniquely determined by the initial condition $s\mapsto u^* - M\unbm_{\SminusZ}$ in $\Vcal_0$ and the dynamics of the reduced-costs SMDP in SSP configuration featured in~\Cref{thm:normalization}.
    Then, for all $i$ in $S$, we have 
    $ \theta_i = \underline{t} + \theta_i',\quad \textrm{where} \quad \theta_i'\coloneqq \inf\big\{t\geq 0 \;\big|\; v'(i,t) = u^*(i)\big\}$.
\end{theorem}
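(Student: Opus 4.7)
The strategy is to shift time by $\underline{t}$ and subtract $\lambda(\underline{t}+s) + M$ in order to recast the catch-up question in the SSP-reduced frame and directly match the function $v'$. Concretely, I would define $\tilde v(i,s) \coloneqq v(i, \underline{t}+s) - \lambda(\underline{t}+s) - M$ for $s \in [-\tmax,\infty)$ and $i \in S$, and aim to prove $\tilde v(i,s) = v'(i,s)$ for every $i \neq 0$ and every $s \geq -\tmax$. Once this is secured, the identity $v(i,t) = \lambda t + u^*(i) + M$ rewrites, via $t = \underline{t}+s$, as $\tilde v(i,s) = u^*(i)$, hence as $v'(i,s) = u^*(i)$, which yields $\theta_i = \underline{t}+\theta_i'$; the case $i=0$ is immediate, since $\theta_0 = \underline{t}$ and $\theta_0' = 0$.

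For the identification at $i \neq 0$, I would first check that the initial conditions on $[-\tmax, 0)$ match: using that the system is initialized in the stationary regime (so that $v(j, \underline{t}+s) = \lambda(\underline{t}+s) + u^*(j)$ for $s < 0$ and $j \neq 0$), one obtains $\tilde v(i,s) = u^*(i) - M = v'(i,s)$. Then I would substitute the uniform identity $v(j, \underline{t}+s-t^a) = \tilde v(j, s-t^a) + \lambda(\underline{t}+s-t^a) + M$ (valid for all $j \in S$ and all $s - t^a \geq -\tmax$: it follows from the stationary regime when $j \neq 0$, and from the prescribed form of $v(0,\cdot)$ when $j=0$) into the SMDP dynamics of $v$ at $i \neq 0$, $s \geq 0$. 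The linear terms telescope and the resulting recursion reads $\tilde v(i,s) = \min_{a \in A_i}\{(c^a - \lambda t^a) + \sum_{j \in S} p^a_j \tilde v(j, s-t^a)\}$, which is exactly the reduced-cost SSP dynamics satisfied by $v'$, as established in \Cref{thm:normalization}.

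The main obstacle is that $\tilde v(0,\cdot)$ and $v'(0,\cdot)$ do not coincide on $[-\tmax, 0)$: a direct computation gives $\tilde v(0,s) = -M$ there, whereas $v'(0,s) = 0$ by the $\Vcal_0$ convention built into $v'$. This discrepancy could in principle propagate to states $i \neq 0$ via actions $a$ with $p^a_0 > 0$ and $s - t^a < 0$. To resolve it, I would invoke the standing normalization already recalled earlier in the paper (after \Cref{thm:value_is_S0}): up to introducing intermediary states, one may assume without loss of generality that every action giving access to state $0$ has null sojourn time. Under this convention, any appearance of $\tilde v(0, s - t^a)$ in the $s \geq 0$ recursion satisfies $t^a = 0$, so $s - t^a = s \geq 0$, and hence $\tilde v(0, s) = 0 = v'(0, s)$. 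Consequently the recursions for $\tilde v$ and $v'$ at states $i \neq 0$ draw from identical inputs, and a straightforward induction on $s$ (justified by the uniqueness provided by \Cref{prop:uniquely_determined} under Assumption~\ref{ass:nonzeno}) yields $\tilde v(i,s) = v'(i,s)$ for every $i \neq 0$ and every $s \geq -\tmax$, completing the argument.
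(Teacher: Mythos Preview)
Your proposal is correct and follows essentially the same route as the paper. The paper's proof (via \Cref{prop:uniquely_determined_heaviside}) uses additive homogeneity of the semigroup to rewrite $v(t+s)=\Sg_{t-\underline t}[r\mapsto w(r)-M\unbm_{\SminusZ}](s)+(\lambda\underline t+M)\unbf$ and then invokes \Cref{thm:normalization}; your definition $\tilde v(i,s)=v(i,\underline t+s)-\lambda(\underline t+s)-M$ is precisely this same shift-and-normalize step carried out coordinatewise, and your use of the null-sojourn convention for actions reaching $0$ to neutralize the mismatch $\tilde v(0,s)=-M\neq 0=v'(0,s)$ on $[-\tmax,0)$ is exactly the device the paper invokes in the proof of \Cref{prop:uniquely_determined_heaviside}. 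One cosmetic point: the ``uniform identity'' you substitute is simply the definition of $\tilde v$ rearranged, so no separate justification (``stationary regime'' or ``prescribed form of $v(0,\cdot)$'') is needed there.
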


 \section{Finite time convergence of the Semi-Markov SSP-value}
 \label{sec:convergence}

 In this section, we tackle the purified version of our problem brought to light in~\Cref{thm:reduction_SSP_2}, letting aside $\lambda$-sink SMDP to solely focus on SMDPs in SSP configurations.
 In particular, the evolution semigroups considered hereafter such as $(\Sg_t)_{t\geq 0}$ are associated with dynamics of type~\eqref{eq:delta_SSP} with null costs on state $0$. Furthermore, observing that the definition of $\underline{\chi}$ in~\eqref{eq:chi_lower} for $\lambda$-sink SMDPs carries over to SSP configurations (that are essentially $0$-sink SMDPs). In consequence, we rephrase the two conditions of Assumption~\ref{ass:fluid_regime} in the special case of SMDPs in SSP configuration.

 \begin{assumption}\label{ass:fluid_SSP_configuration} Consider a SMDP in SSP configuration.
  \begin{enumerate}[label=(\arabic*)]
    \item\label{ass:fluid_SSP_configuration:item1} All the states of $S$ have access to $0$,
    \item\label{ass:fluid_SSP_configuration:item2} $\underline{\chi}>0$.
  \end{enumerate}
\end{assumption}

The following lemma ensures that Assumption~\ref{ass:fluid_SSP_configuration} corresponds to Assumption~\ref{ass:fluid_regime} in the sense of the reduction stated in \Cref{thm:normalization}.  

\begin{lemma}
Let $\Pcal$ be a $\lambda$-sink SMDP, and $\Pcal'$ be the SMDP in SSP configuration built in \Cref{thm:normalization}. Then $\Pcal$ satisfies Assumption~\ref{ass:fluid_regime} if and only if $\Pcal'$ satisfies Assumption~\ref{ass:fluid_SSP_configuration}.
\end{lemma}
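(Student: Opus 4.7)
The plan is to observe that $\mathcal{P}$ and $\mathcal{P}'$ share all their combinatorial and timing data (state space $S$, action sets $A_i$, transition probabilities $p^a_j$, and sojourn times $t^a$); only the costs differ, via $c^a \mapsto c^a - \lambda t^a$. In particular, for every policy $\sigma \in \mathfrak{S}$ the induced Markov chain on $S$ is the same in $\mathcal{P}$ and $\mathcal{P}'$, so the collection $\mathcal{F}(\sigma)$ of final classes, the invariant measures $\mu^\sigma_F$, and the reachability relations are unchanged. Condition~\ref{ass:fluid_regime:item1} of Assumption~\ref{ass:fluid_regime} and condition~\ref{ass:fluid_SSP_configuration:item1} of Assumption~\ref{ass:fluid_SSP_configuration} are therefore literally the same statement.

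It remains to handle conditions~\ref{ass:fluid_regime:item2} and~\ref{ass:fluid_SSP_configuration:item2}. The plan is to show that the ``second lowest growth rate'' satisfies
\begin{equation*}
\underline{\chi}(\mathcal{P}') \;=\; \underline{\chi}(\mathcal{P}) - \lambda\,,
\end{equation*}
from which the equivalence $\underline{\chi}(\mathcal{P}) > \lambda \iff \underline{\chi}(\mathcal{P}') > 0$ is immediate. Fix a policy $\sigma \in \mathfrak{S}$ and a final class $F \in \mathcal{F}(\sigma)$ with $F \neq \{0\}$. Since $\{0\}$ is absorbing under any policy (as $p^{a_0}_0 = 1$ is the only action from $0$) and final classes are disjoint, the support of $\mu^\sigma_F$ is contained in $F$ and in particular does not include the sink state $0$. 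Consequently, by linearity of $\langle \mu^\sigma_F, \cdot \rangle$,
\begin{equation*}
\langle \mu^\sigma_F,\, c^\sigma - \lambda t^\sigma \rangle \;=\; \langle \mu^\sigma_F, c^\sigma \rangle - \lambda\,\langle \mu^\sigma_F, t^\sigma \rangle\,.
\end{equation*}
Under Assumption~\ref{ass:nonzeno}, the denominator $\langle \mu^\sigma_F, t^\sigma \rangle$ is strictly positive, so dividing yields
\begin{equation*}
\frac{\langle \mu^\sigma_F,\, c^\sigma - \lambda t^\sigma \rangle}{\langle \mu^\sigma_F, t^\sigma \rangle} \;=\; \frac{\langle \mu^\sigma_F, c^\sigma \rangle}{\langle \mu^\sigma_F, t^\sigma \rangle} - \lambda\,.
\end{equation*}

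Taking the minimum over $\sigma \in \mathfrak{S}$ and $F \in \mathcal{F}(\sigma) \setminus \{\{0\}\}$ (the same indexing set on both sides, by the first paragraph) produces the desired identity $\underline{\chi}(\mathcal{P}') = \underline{\chi}(\mathcal{P}) - \lambda$, which closes the proof. The only subtlety worth checking explicitly is that the class $\{0\}$ is indeed a final class of every policy in both $\mathcal{P}$ and $\mathcal{P}'$ (so that ``$F \neq \{0\}$'' has the same meaning on both sides); this follows from the $\lambda$-sink structure, which is preserved by the cost reduction.
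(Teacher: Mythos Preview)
Your proof is correct and takes a more direct route than the paper's. The paper argues by applying \Cref{prop:ass_SSP_is_fluid_regime} twice: once to $\mathcal{P}$, giving that Assumption~\ref{ass:fluid_regime} on $\mathcal{P}$ is equivalent to the proper/improper policy conditions~\ref{prop:ass_SSP_is_fluid_regime:item1}--\ref{prop:ass_SSP_is_fluid_regime:item2} on $\mathcal{P}'$; and once to $\mathcal{P}'$ itself (viewed as a $0$-sink SMDP), noting that the reduced-cost SMDP of $\mathcal{P}'$ is $\mathcal{P}'$ again, so Assumption~\ref{ass:fluid_SSP_configuration} on $\mathcal{P}'$ is equivalent to the \emph{same} conditions~\ref{prop:ass_SSP_is_fluid_regime:item1}--\ref{prop:ass_SSP_is_fluid_regime:item2} on $\mathcal{P}'$. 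You instead bypass the detour through proper/improper policies entirely and compute directly that $\underline{\chi}(\mathcal{P}') = \underline{\chi}(\mathcal{P}) - \lambda$ from the definition~\eqref{eq:chi_lower}, using only that the cost reduction leaves the combinatorics (final classes, invariant measures, sojourn times) untouched. Your argument is shorter and more self-contained; the paper's has the virtue of reusing an already-established equivalence. One minor remark: the observation that $0$ is not in the support of $\mu^\sigma_F$ for $F \neq \{0\}$ is not actually needed for the linearity step, which holds regardless; you may drop it without loss.
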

We point out that this result follows from the application of \Cref{prop:ass_SSP_is_fluid_regime} to $\Pcal$ and $\Pcal'$, and the fact that $\Pcal'$ is precisely the reduced cost SMDP associated to itself.

The purpose of this section is to provide quantitative and constructive upper bounds on the $(\theta_i')_{i\in S}$ introduced in~\Cref{thm:reduction_SSP_2}. This, in turn, yields upper bounds on the catch-up times of the original $\lambda$-sink SMDP, thanks to~\Cref{thm:reduction_SSP_2}.

\subsection{The different convergence phases}
  \label{sec:regimes}

  Recall as stated in~\Cref{thm:SSSP} that a function determined by an initial condition $v^0$ in $\Vcal_0$ and satisfying the dynamic programming equations of a SMDP in SSP configuration converges towards $u^*$ as considered horizons approach infinity. We first provide a qualitative insight on the speed of convergence associated with this result.

  For all $i$ in $\SminusZ$, we denote by $A_i^*$ the nonempty subset of $A_i$ composed of \emph{optimal actions}, i.e. those achieving minimality in~\eqref{eq:fixed_point_of_T}. Similarly, we introduce $\SSigma^*\coloneqq A_0\times\prod_{i\in \SminusZ}{A_i^*}$ the set of \emph{optimal policies}, included in $\SSigma$. In accordance with~\Cref{remark:reduced_semigroup}, optimal actions induce an evolution semi-group $\Sg^*$, which corresponds to applying only optimal policies. The following proposition shows that there is always an instant $t^*$ such that: 
\begin{enumerate*}
\item before~$t^*$, either optimal or non-optimal actions can be picked, and thus the evolution of the dynamics amounts to applying the semigroup $\Sg$;
\item after~$t^*$, only optimal actions are chosen, so that the finer dynamics associated with the semigroup $\Sg^*$ are actually used.
\end{enumerate*}
Moreover, the second phase induces a geometric speed of convergence controlled by the spectral radii of the probability matrices of optimal policies.
  \todo{SG: warning, previous statement was false, contraction is only in a WEIGHTED sup-norm and an epsilon was missing}
The geometric convergence result is deduced from~\cite[Th.~1 and Th~.2]{akian2019solving},
using tools from non-linear Perron--Frobenius theory~\cite{akian2011collatz}.
In particular, we refer to~\cite{akian2019solving} for background on weighted sup-norms.
\todo{SG: The notion of weighted sup norm is not so trivial in the semigroup case, we will have to discuss this after submission}

\begin{proposition}\label{prop:phases}
Suppose Assumptions~\ref{ass:nonzeno} and \ref{ass:fluid_SSP_configuration} hold.
Let $v^0 \in \Vcal_0$ and denote by $v$ the solution of~\eqref{eq:SMDP_finite_horizon} it determines. Then, there exists $t^*\geq 0$ such that for all $t\geq t^*$ and all $s$ in $[-\tmax, 0)$, we have 
\( v(t+s) = \Sg^*_{t-t^*}\big[\Sg_{t^*}[v^0]\big](s)\).
Moreover, if we define 
\(
\nu:=\max_{\sigma\in \SSigma^*}\rho\big(P^{\sigma}\big|_{(\SminusZ)\times(\SminusZ)}\big)<1\), where $\rho(\cdot)$ denotes the spectral radius of a matrix, then 
for all $\varepsilon>0$ small enough, there is a
weighted sup norm in which $\Sg^*_{\tmax}$ is a contraction of rate $\nu+\varepsilon$.
\end{proposition}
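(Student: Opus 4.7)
My plan for the first claim is a strict-gap argument combined with Theorem~\ref{thm:SSSP}. By finiteness of $A$ and the optimality equations~\eqref{eq:fixed_point_of_T}, the quantity
\[
\delta := \min_{i \in \SminusZ}\;\min_{a \in A_i \setminus A_i^*} \Big\{ (c^a - \lambda t^a) + \sum_{j \in S} p^a_j u^*(j) - u^*(i) \Big\}
\]
is strictly positive (or vacuously $+\infty$, if every action is optimal). Theorem~\ref{thm:SSSP} furnishes a $t^*$ such that $\|v(\tau) - u^*\|_\infty < \delta/3$ for every $\tau \geq t^* - \tmax$. For any $t \geq t^*$, substituting into~\eqref{eq:delta_SSP} shows that a non-optimal action yields a value at least $u^*(i) + 2\delta/3$, while an optimal one yields at most $u^*(i) + \delta/3$; the Bellman minimum is therefore attained exclusively at optimal actions, which gives $v(t+s) = \Sg^*_{t-t^*}[\Sg_{t^*}[v^0]](s)$ by a straightforward induction over the successive update epochs.

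\textbf{Part 2 (substochasticity and Perron weight).} For the contraction bound, I would first argue that every $\sigma \in \SSigma^*$ is proper: otherwise, condition~\ref{prop:ass_SSP_is_fluid_regime:item2}, transferred to the SSP by Assumption~\ref{ass:fluid_SSP_configuration}, forces the expected reduced cost under $\sigma$ to diverge from some state, contradicting the finiteness of $u^*$, which $\sigma$ attains. Hence each $P^\sigma|_{\SminusZ \times \SminusZ}$ is strictly substochastic and has spectral radius less than $1$, whence $\nu < 1$. Applying the Collatz--Wielandt theory of~\cite{akian2011collatz} to the monotone positively homogeneous map $T \colon \R^{\SminusZ} \to \R^{\SminusZ}$ defined by $T(x)_i := \max_{\sigma \in \SSigma^*} \sum_{j \in \SminusZ} p^{\sigma(i)}_j\,x_j$ produces, for all $\varepsilon>0$ small enough, a strictly positive vector $w \in \R^{\SminusZ}$ with $T(w) \leq (\nu + \varepsilon)\,w$ componentwise. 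Equip $\Vcal_0$ with the weighted sup norm $\|f\|_w := \sup\{ w_i^{-1} |f(i,s)| : s \in [-\tmax,0),\ i \in \SminusZ\}$ (the $0$-component is identically zero along any SSP trajectory and can be discarded).

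\textbf{Part 2 (lift to $\Sg^*_{\tmax}$) and main obstacle.} It remains to lift the static weighted contraction to the dynamic operator $\Sg^*_{\tmax}$. Using monotonicity and additive homogeneity (Proposition~\ref{prop:properties_semigroup}), subtracting the Bellman equation~\eqref{eq:delta_SSP} restricted to $A_i^*$ for two initial conditions yields, for all $t \in (0,\tmax]$ and $s \in [-\tmax,0)$, the pointwise bound
\[
\big|\Sg^*_t[v^0](i,s) - \Sg^*_t[\tilde v^0](i,s)\big| \;\leq\; \max_{a \in A_i^*} \sum_{j \in \SminusZ} p^a_j\;\big|\Sg^*_{t-t^a}[v^0](j,s) - \Sg^*_{t-t^a}[\tilde v^0](j,s)\big|,
\]
with the convention that the summand for $j=0$ vanishes. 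Iterating until the arguments fall inside $[-\tmax,0)$ and combining with the super-eigenvector inequality $T(w) \leq (\nu + \varepsilon) w$ delivers the desired contraction factor in $\|\cdot\|_w$. The delicate step, which I expect to handle via the nonlinear Perron--Frobenius framework of~\cite[Th.~1 and Th.~2]{akian2019solving}, is the bookkeeping around null sojourn times during this iteration: Assumption~\ref{ass:nonzeno}, together with the properness of optimal policies, is precisely what ensures that any chain of zero-delay actions terminates in either a positive-delay action or in state $0$, so that one full application of $\Sg^*_{\tmax}$ amounts to exactly one pass of $T$ on~$w$ and no more.
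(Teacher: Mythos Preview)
The paper offers essentially no proof here: it says the contraction part ``is deduced from \cite[Th.~1 and Th.~2]{akian2019solving}, using tools from non-linear Perron--Frobenius theory \cite{akian2011collatz}'' and even carries an internal remark that the weighted sup-norm construction ``is not so trivial in the semigroup case''. Your outline follows exactly that route and supplies more detail than the paper does, so there is little to compare against beyond the same references you already invoke. Part~1 is the standard strict-gap argument and is fine (the uniform bound $\|v(\tau)-u^*\|_\infty<\delta/3$ you assume follows from the monotone sandwich used in the proof of \Cref{thm:SSSP}).

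Two places in Part~2 deserve tightening. First, your properness argument is loose: $\sigma\in\SSigma^*$ only says that $u^*$ is a fixed point of the one-policy operator $x\mapsto c^\sigma+P^\sigma x$, not that the $\sigma$-expected cost equals $u^*$, so divergence under condition~\ref{prop:ass_SSP_is_fluid_regime:item2} does not directly contradict ``$\sigma$ attains $u^*$''. The clean step is via the invariant measure: a final class $F\neq\{0\}$ of an optimal $\sigma$ would give $(I-P^\sigma|_{F\times F})u^*|_F=c^\sigma|_F$, hence $\langle\mu_F,c^\sigma\rangle=0$, contradicting $\underline\chi>0$. Second, the assertion that ``one full application of $\Sg^*_{\tmax}$ amounts to exactly one pass of $T$ and no more'' misdescribes the mechanism: when some sojourn times are strictly below $\tmax$, a state at time $\tau$ near $\tmax$ undergoes several positive-delay Bellman steps before its argument falls into $[-\tmax,0)$, and a zero-delay chain already composes several rows of $T$ before any physical time elapses. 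Neither direction hurts you --- extra passes can only sharpen the factor --- but the argument you actually want is two-step: (i) $T(w)\le w$ yields nonexpansiveness of $\Sg^*_t$ in $\|\cdot\|_w$, hence $|v(j,\tau')-\tilde v(j,\tau')|\le w_j\|v^0-\tilde v^0\|_w$ uniformly in $\tau'$; then (ii) a single application of the Bellman inequality at any $\tau\in[0,\tmax)$ gives $|v(i,\tau)-\tilde v(i,\tau)|\le T(w)_i\|v^0-\tilde v^0\|_w\le(\nu+\varepsilon)\,w_i\|v^0-\tilde v^0\|_w$, with the zero-delay resolution absorbed because each iterate of $T$ along such a chain stays below $(\nu+\varepsilon)w$.
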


We point out that this proposition is reminiscent of the different phases of convergence established by Schweitzer and Federgruen in their work~\cite{schweitzer79} of the deviation $v(t)-\chi(\Sg)t$ in the case of MDPs when this quantity admits a limit.

We are interested in cases where convergence occurs in finite time,
i.e., {\em without geometric residual},
which is desirable in our call center application.
The next theorem shows that such a finite time convergence cannot be expected unless all probability matrices associated with optimal proper policies and restricted to states of $\SminusZ$ are nilpotent, hence making the rate of geometric convergence $\nu$ featured in~\Cref{prop:phases} null.
The latter is also equivalent to some restrictions on the SMDP topology, that we may interpret as requiring a form of \emph{hierarchy} within the set of states that is compatible with the moves made by the optimal policies; these should always ``descend'' in the hierarchy until finally reaching $0$, the minimal element.

\begin{theorem}
  \label{thm:finite_time_theoretical}
  Suppose Assumptions~\ref{ass:nonzeno} and \ref{ass:fluid_SSP_configuration} hold. %
  Then, the following are equivalent:
  \begin{enumerate}[label=(\roman*)]
    \item\label{thm:finite_time_theoretical:item1} for all $v^0$ in $\Vcal_0$ and associated solution $v$ of~\eqref{eq:SMDP_finite_horizon}, there exists $t^*$ in $\R$ such that for all $t\geq t^*$, $v(t)=u^*$%
    \item\label{thm:finite_time_theoretical:item2} for all proper optimal policies $\sigma$ in $\SSigma^*$, we have %
      \(\rho\big(P^{\sigma}\big|_{(\SminusZ)\times(\SminusZ)}\big) = 0\),%
    \item\label{thm:finite_time_theoretical:item3} there exists a partial ordering $(\leq)$ on $S$ such that for all $\sigma$ in $\SSigma^*$ and for all $i$ in $\SminusZ$, $\supp(\sigma(i))\subset\{j\in S\,,\;j < i\}$.
  \end{enumerate}
\end{theorem}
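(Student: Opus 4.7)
I would prove the equivalence through the chain (iii)$\Rightarrow$(ii), (ii)$\Rightarrow$(iii), (iii)$\Rightarrow$(i), and (i)$\Rightarrow$(ii). A key preliminary, valid under Assumption~\ref{ass:fluid_SSP_configuration}, is that \emph{every} $\sigma\in\SSigma^{*}$ is automatically proper: if some optimal $\sigma$ had a non-$\{0\}$ final class $F$, then the Bellman fixed-point equation restricted to $F$ would read $(I-P^{\sigma}|_{F})\,u^{*}|_{F}=(c^{\sigma}-\lambda t^{\sigma})|_{F}$, whereas the invariant measure of $P^{\sigma}|_{F}$ pairs strictly positively with the reduced cost vector because $\underline{\chi}>0$, giving a Fredholm-alternative contradiction. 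This allows me to treat the quantifier ``proper optimal'' in (ii) as ``optimal'', matching the quantifier of (iii).

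For (iii)$\Rightarrow$(ii), any linear extension of the partial order makes $P^{\sigma}|_{\SminusZ\times\SminusZ}$ strictly triangular, hence nilpotent with spectral radius $0$. For (ii)$\Rightarrow$(iii), I would introduce the directed graph $G$ on $S$ with edges $(i,j)$ such that $p^{a}_{j}>0$ for some $a\in A_{i}^{*}$ (equivalently, the union of the support graphs of all $\sigma\in\SSigma^{*}$), and show that $G|_{\SminusZ}$ is acyclic. A cycle $i_{1}\to\cdots\to i_{k}\to i_{1}$ in $G|_{\SminusZ}$ would yield, upon choosing at each $i_{\ell}$ an optimal action realizing the corresponding edge and any optimal action at the other states, a single policy $\sigma\in\SSigma^{*}$ whose matrix $P^{\sigma}|_{\SminusZ}$ carries that cycle; the trace of its $k$-th power would then be positive, so its spectral radius would be positive, contradicting (ii). Reachability in the resulting DAG (with $0$ minimal thanks to Assumption~\ref{ass:fluid_SSP_configuration}(1)) then defines the sought partial order.

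The implication (iii)$\Rightarrow$(i) I would handle by induction on the height $h(i)$ in the partial order. By Proposition~\ref{prop:phases} there is a finite $t^{*}$ past which $v$ evolves under $\Sg^{*}$. The base case $h(0)=0$ holds because $v^{0}\in\Vcal_{0}$ and $u^{*}(0)=0$. In the inductive step, once $v(j,\cdot)\equiv u^{*}(j)$ on $[T_{j},\infty)$ for all $j<i$ with $T_{j}\leq t^{*}+h(j)\tmax$, the Bellman right-hand side at $i$ evaluated at any $t\geq t^{*}+h(i)\tmax$ only involves stabilized values; by (iii) and the defining property of $A_{i}^{*}$ it collapses to $u^{*}(i)$. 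Hence $v(t)=u^{*}$ for all $t\geq t^{*}+(|S|-1)\tmax$.

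The main obstacle is the contrapositive of (i)$\Rightarrow$(ii). Assume $\rho:=\rho(P^{\sigma}|_{\SminusZ})>0$ for some $\sigma\in\SSigma^{*}$. Perron--Frobenius for nonnegative matrices yields a nonzero $e\geq 0$ supported in $\SminusZ$ with $P^{\sigma}|_{\SminusZ}\,e=\rho\,e$; I set $e(0)=0$ and, for a small $\varepsilon>0$, take as initial condition the constant profile $v^{0}(i,s)=u^{*}(i)-\varepsilon e(i)$, which lies in $\Vcal_{0}$. The constant function $u^{*}$ is a fixed point of $\Sg$, so monotonicity gives $v(t)\leq u^{*}$ for all $t\geq 0$; writing $w(t):=u^{*}-v(t)\geq 0$ and upper-bounding the Bellman minimum at $i$ by the term associated with the action $\sigma(i)$ yields the delay super-inequality
\[
w(i,t)\;\geq\;\sum_{j\in S}p^{\sigma(i)}_{j}\,w(j,\,t-t^{\sigma(i)}),\qquad i\in\SminusZ,
\]
with initial data $w(\cdot,s)=\varepsilon e$ on $[-\tmax,0)$. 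A pointwise comparison with the linear delay system driven by $P^{\sigma}$ and the sojourn vector $t^{\sigma}$ gives $w(t)\geq W(t)$ componentwise, where $W$ is the solution of that linear system starting from $\varepsilon e$. Because the linear dynamics preserves the direction $e$ up to a positive scalar factor that decays only geometrically at rate $\rho<1$, $W(t)$ remains strictly positive on $\supp(e)$ for all $t\geq 0$. Hence $v(t)\neq u^{*}$ for every $t$, contradicting (i). The delicate technical point here is the rigorous treatment of the delay structure in the linear comparison system in the presence of possibly null sojourn times, which is exactly where the non-Zeno Assumption~\ref{ass:nonzeno} and the non-linear Perron--Frobenius tools of~\cite{akian2011collatz,akian2019solving} intervene.
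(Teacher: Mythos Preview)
Your argument is correct and self-contained, but it differs from the paper's route in two places. For the equivalence \ref{thm:finite_time_theoretical:item2}$\Leftrightarrow$\ref{thm:finite_time_theoretical:item3}, the paper simply invokes the nonlinear spectral radius identity of~\cite[Prop.~8.1]{akian2011collatz}, whereas you give the direct elementary argument (triangularisation one way, a single cycle--carrying policy the other). More substantially, for the hard implication the paper argues \ref{thm:finite_time_theoretical:item1}$\Rightarrow$\ref{thm:finite_time_theoretical:item3}: after reducing to the MDP with $u^{*}=0$, $A_i=A_i^{*}$, $c^{a}=0$, it introduces the \emph{sum matrix} $P_i=\sum_{a\in A_i^{*}}p^{a}$ and uses the inequality $T(x)\leq N^{-1}Px$ for $x\leq 0$ to deduce from $T^{k}(-\unbf)=0$ that $P^{k}\unbf=0$, hence $P$ is nilpotent and the support graph is acyclic. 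Your approach instead proves \ref{thm:finite_time_theoretical:item1}$\Rightarrow$\ref{thm:finite_time_theoretical:item2} by a Perron eigenvector perturbation and a comparison with a single-policy linear delay system. Both are valid; the paper's sum--matrix trick is slicker in the unit-delay case but leans on the reduction to MDPs, while your comparison argument works directly in the semi-Markov setting once the delay bookkeeping is done. Two minor remarks: the smallness of $\varepsilon$ plays no role in your inequality $w(i,t)\geq\sum_{j}p^{\sigma(i)}_{j}w(j,t-t^{\sigma(i)})$, since this follows for \emph{any} $\sigma\in\SSigma^{*}$ from the Bellman upper bound and the optimality equation; and to make the positivity of $W$ transparent with heterogeneous delays, it helps to pass to $\tilde W(i,t)=W(i,t)/e(i)$ on $C=\supp(e)$, so that the transformed kernel has constant row sums $\rho$ and one gets $\tilde W(i,t)\geq\varepsilon\,\rho^{N}$ with $N$ the (finite, by Assumption~\ref{ass:nonzeno}) maximal number of $\sigma$-moves before the trajectory reaches the initial window. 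Your preliminary observation that every $\sigma\in\SSigma^{*}$ is proper under Assumption~\ref{ass:fluid_SSP_configuration} is correct and usefully clarifies that the word ``proper'' in~\ref{thm:finite_time_theoretical:item2} is redundant.
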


\subsection{Hierarchical SSP configurations}
\label{sec:hierarchical}

The set $\SSigma^*$ of optimal policies that control the ultimate rate of convergence according to the previous theorem is in general not known, and depends on the costs and sojourn times attached to the actions of the SMDP. It is desirable to identify conditions of a more topological nature under which finite time convergence occurs regardless of optimal character of some policies.
Building on the statement of~\Cref{thm:finite_time_theoretical}-\ref{thm:finite_time_theoretical:item3}, we choose to enforce the existence of a partial ordering $(\leq)$ on $S$ such that applying some actions necessarily make the order strictly decrease. This is formalized in the next assumption.

\begin{assumption}\label{ass:hierarchy1}
  There is a partial ordering $(\leq)$ on $S$, such that for all state $i \in S$, there is a partition of the set of playable actions from $i$ in the form $A_i = A_i^- \uplus A_i^+$, with the condition that
  if $a\in A_i^-$, then ($ p^a_j > 0 \implies j < i$) and
  if $a\in A_i^+$, then ($p^a_j > 0 \implies i \leq j)$,
in which $j < i$ means that $j \leq i$ and $j \neq i$. 
In addition, $A_i^-\neq\emptyset$ for all $i\in \SminusZ$.
\end{assumption}

In other words, Assumption~\ref{ass:hierarchy1} requires that actions either strictly ``descend'' or weakly ``ascend'' relatively to the states hierarchy, the first case being always possible. In what follows, we shall refer to $a\in A_i^-$ (resp.\ $a\in A_i^+$) as a ``descending action'' (resp.\ an ``ascending action'').
Remark that imposing in Assumption~\ref{ass:hierarchy1} that $A_i^-\neq\emptyset$ for all $i\in\SminusZ$ implies the condition \ref{prop:ass_SSP_is_fluid_regime:item1} of \Cref{prop:ass_SSP_is_fluid_regime}, since any policy which makes use of only descending actions is proper. This implication turns into an equivalence (\ie, any proper policy must make use of only descending actions) if the next condition is also met.

\begin{assumption}\label{ass:hierarchy2}
  For all $\sigma\in\SSigma$, if there exists $i\in S$ such that $\sigma(i)\in A_i^+$, then $\sigma$ is improper.
\end{assumption}

Under Assumptions~\ref{ass:hierarchy1} and~\ref{ass:hierarchy2} (stronger than the condition~\ref{thm:finite_time_theoretical:item3} of~\Cref{thm:finite_time_theoretical}), we obtain that $A_i^* \subset A_i^-$ for all $i \in S$ and according to~\Cref{thm:finite_time_theoretical}, convergence towards $u^*$ arises in finite time.

\paragraph{\textnormal{\textbf{Example}}} For the medical emergency call center depicted on~\Cref{fig:EMS_A_SMDP}, these two hierarchical assumptions are satisfied, as highlighted on the first picture of~\Cref{fig:three_hierarchies}. The states hierarchy is the one naturally given by the (partial) order in which the different tasks are performed in the call center.

\medskip 

Our goal is to leverage on the hierarchical structure brought by Assumptions~\ref{ass:hierarchy1} and~\ref{ass:hierarchy2} to bound the catch-up times $(\theta_i')_{i\in S}$ defined in~\Cref{thm:reduction_SSP_2}. To this purpose, we remark that the state $0$ is the bottom element of the order $(\leq)$, and that the states that lie in low layers of the hierarchy shall have ``quicker'' access to state $0$ than states located higher in the hierarchy (since the latter may need to pass through the former). From this perspective, if $i$ is a state of $S$, it is natural to study the catch-up time $\theta_i'$ after the states located lower than $i$ in the hierarchy already caught up the input. It amounts to determining the $(\theta_i')_{i\in S}$ by following an inductive scheme given by the partial ordering $(\leq)$.

This reasoning would result in straightforward bounds in the absence of ascending actions, \ie, which make the player move in states located higher in the hierarchy and thus slow down the catching-up of the input at the lowermost state $0$. We know under Assumption~\ref{ass:fluid_SSP_configuration} and~\ref{ass:hierarchy2} that these actions are ultimately non optimal because they are necessarily associated with improper policies and it is less costly to end up playing a proper policy. However for short decision horizons, some of these ascending actions may actually be optimal.

We tackle this difficulty by considering the collection of state-accessibility graphs of our SMDP in which particular states cannot use descending actions. More precisely, given $i\in \SminusZ$ such that $A_i^+\neq\emptyset$, we introduce the graph $\mathcal{G}^{(i)}_{S}$ with nodes set $S$ and with edge set
  \( \big\{(k,\ell) \in S^2\;\;\big|\;\; \exists a \in A_k \setminus A_k^-\quad\textrm{s.t.}\quad p^a_{\ell} > 0\big\}\).%
We denote by $\CFC{i}$ the strongly connected component of state $i$ in the graph $\mathcal{G}^{(i)}_S$. The~\Cref{fig:three_hierarchies} represents the two such subgraphs of our running example SMDP introduced in~\Cref{fig:EMS_A_SMDP}. It can be seen on this example that for $i$ in $S$ such that $A_i^+\neq \emptyset$, not selecting actions in $A_i^-$ amounts to staying within states of $\CFC{i}$ with no way of accessing $0$. The next lemma formalizes this fact.

\begin{figure}[htb]
\begin{center}
\resizebox{.48\textwidth}{!}{
  \def\tkzscl{.5}
\begin{tabular}{ccc}
  $\mathcal{G}_S$ & 
  $\mathcal{G}_S^{(1)}$ & 
  $\mathcal{G}_S^{(3)}$ \\
  \def\cutAt{0}\definecolor{orangeDIY}{rgb}{.92,.5,.11}
\definecolor{bluePY}{HTML}{1F77B4} 

\begin{tikzpicture}[auto,node distance=8mm,>=latex,scale=\tkzscl]
  \tikzset{arrowPetri/.style={>=latex,rounded corners=5pt}}

    \tikzstyle{round}=[thick,draw=black,circle,inner sep=2pt]

    \def\p{2.2}
    \def\colorBlockA{black}
    \def\colorBlockB{black}
    \def\colorBlockC{black}
    \def\opacityLinkB{1}
    \def\opacityLinkC{1}
    \ifnum\cutAt>0
      \def\colorBlockB{bluePY}
      \def\colorBlockC{bluePY}
        \def\opacityLinkB{.22}
    \fi
    \ifnum\cutAt>1
    \def\colorBlockB{black}
    \def\colorBlockC{bluePY}
      \def\opacityLinkB{1}
        \def\opacityLinkC{.22}
    \fi

    \node[round, color = \colorBlockA] (s0) at (0,0)              {\scriptsize$0$};
    \node[round, color = \colorBlockB] (s1) at ($(s0)+(0,-\p)$) {\scriptsize$1$};
    \node[round, color = \colorBlockB] (s2) at ($(s1)+(0,-\p)$)   {\scriptsize$2$};
    \node[round, color = \colorBlockC] (s3) at ($(s1)+(\p,-\p)$)  {\scriptsize$3$};
    \node[round, color = \colorBlockC] (s4) at ($(s3)+(0,-\p)$)   {\scriptsize$4$};
    \node[round, color = \colorBlockC] (s5) at ($(s4)+(0,-\p)$)   {\scriptsize$5$};

   \coordinate (a0)  at ($(s0)+(0,.5*\p)$) {};  
   \coordinate (a11) at ($(s1)+(0,.5*\p)$) {};  
   \coordinate (a12) at ($(s1)+(-.5*\p,0)$) {};  
   \coordinate (a2)  at ($(s2)+(0,.5*\p)$) {};  
   \coordinate (a31) at ($(s3)+(0,.5*\p)$) {};  
   \coordinate (a32) at ($(s3)+(.5*\p,0)$) {};  
   \coordinate (a4)  at ($(s4)+(0,.5*\p)$) {};  
   \coordinate (a5)  at ($(s5)+(0,.5*\p)$) {};  


     \draw[-{Latex[scale=1]}, rounded corners=5pt,  color = \colorBlockA] (a0) |- ($(a0)+(-.5*\p-.5,.2)$) |- (s0);
     \draw[-{Latex[scale=1]}, rounded corners=5pt,  color = \colorBlockA, opacity = \opacityLinkB] (a11) -- (s0);
     \draw[-{Latex[scale=1]}, rounded corners=5pt,  color = \colorBlockB] (a12) -| ($(a12)+(-.5,-1)$) |- (s2);
     \draw[-{Latex[scale=1]}, rounded corners=5pt,  color = \colorBlockB] (a12) -| ($(a12)+(-.5,-1)$) |- (s4);
     \draw[-{Latex[scale=1]}, rounded corners=5pt,  color = \colorBlockB, opacity = \opacityLinkC] (a31) |- (s1);
     \draw[-{Latex[scale=1]}, rounded corners=5pt,  color = \colorBlockB] (a2) -- (s1);
     \draw[-{Latex[scale=1]}, rounded corners=5pt,  color = \colorBlockC] (a4) -- (s3);
     \draw[-{Latex[scale=1]}, rounded corners=5pt,  color = \colorBlockC] (a5) -- (s4);
     \draw[-{Latex[scale=1]}, rounded corners=5pt,  color = \colorBlockC] (a32) -| ($(s5)+(.66*\p+.4,0)$) -- (s5);

   \draw[-{Square[scale=1.5, open, fill=white]}, semithick, color = \colorBlockA] (s0) -- (a0);
   \draw[-{Square[scale=1.5, open, fill=white]}, semithick, color = \colorBlockA, opacity = \opacityLinkB] (s1) -- (a11);
   \draw[-{Square[scale=1.5, open, fill=white]}, semithick, color = \colorBlockB] (s1) -- (a12);
   \draw[-{Square[scale=1.5, open, fill=white]}, semithick, color = \colorBlockB] (s2) -- (a2);
   \draw[-{Square[scale=1.5, open, fill=white]}, semithick, color = \colorBlockB, opacity = \opacityLinkC] (s3) -- (a31);
   \draw[-{Square[scale=1.5, open, fill=white]}, semithick, color = \colorBlockC] (s3) -- (a32);
   \draw[-{Square[scale=1.5, open, fill=white]}, semithick, color = \colorBlockC] (s4) -- (a4);
   \draw[-{Square[scale=1.5, open, fill=white]}, semithick, color = \colorBlockC] (s5) -- (a5);

   \node[ color = \colorBlockA] (lbl_a0)  at ($(a0) +(0,-.2)$) {\tiny$+$};
   \node[ color = \colorBlockB] (lbl_a0)  at ($(a12) +(.2,0)$) {\tiny$+$};
   \node[ color = \colorBlockC] (lbl_a0)  at ($(a32) +(-.2,0)$) {\tiny$+$};
   \node[ color = \colorBlockA, opacity = \opacityLinkB] (lbl_a0)  at ($(a11) +(0,-.245)$) {\tiny$-$};
   \node[ color = \colorBlockB, opacity = \opacityLinkC] (lbl_a0)  at ($(a31) +(0,-.245)$) {\tiny$-$};
   \node[ color = \colorBlockB] (lbl_a0)  at ($(a2) +(0,-.245)$) {\tiny$-$};
   \node[ color = \colorBlockC] (lbl_a0)  at ($(a4) +(0,-.245)$) {\tiny$-$};
   \node[ color = \colorBlockC] (lbl_a0)  at ($(a5) +(0,-.245)$) {\tiny$-$};

\end{tikzpicture} & 
  \def\cutAt{1}\definecolor{orangeDIY}{rgb}{.92,.5,.11}
\definecolor{bluePY}{HTML}{1F77B4} 

\begin{tikzpicture}[auto,node distance=8mm,>=latex,scale=\tkzscl]
  \tikzset{arrowPetri/.style={>=latex,rounded corners=5pt}}

    \tikzstyle{round}=[thick,draw=black,circle,inner sep=2pt]

    \def\p{2.2}
    \def\colorBlockA{black}
    \def\colorBlockB{black}
    \def\colorBlockC{black}
    \def\opacityLinkB{1}
    \def\opacityLinkC{1}
    \ifnum\cutAt>0
      \def\colorBlockB{bluePY}
      \def\colorBlockC{bluePY}
        \def\opacityLinkB{.22}
    \fi
    \ifnum\cutAt>1
    \def\colorBlockB{black}
    \def\colorBlockC{bluePY}
      \def\opacityLinkB{1}
        \def\opacityLinkC{.22}
    \fi

    \node[round, color = \colorBlockA] (s0) at (0,0)              {\scriptsize$0$};
    \node[round, color = \colorBlockB] (s1) at ($(s0)+(0,-\p)$) {\scriptsize$1$};
    \node[round, color = \colorBlockB] (s2) at ($(s1)+(0,-\p)$)   {\scriptsize$2$};
    \node[round, color = \colorBlockC] (s3) at ($(s1)+(\p,-\p)$)  {\scriptsize$3$};
    \node[round, color = \colorBlockC] (s4) at ($(s3)+(0,-\p)$)   {\scriptsize$4$};
    \node[round, color = \colorBlockC] (s5) at ($(s4)+(0,-\p)$)   {\scriptsize$5$};

   \coordinate (a0)  at ($(s0)+(0,.5*\p)$) {};  
   \coordinate (a11) at ($(s1)+(0,.5*\p)$) {};  
   \coordinate (a12) at ($(s1)+(-.5*\p,0)$) {};  
   \coordinate (a2)  at ($(s2)+(0,.5*\p)$) {};  
   \coordinate (a31) at ($(s3)+(0,.5*\p)$) {};  
   \coordinate (a32) at ($(s3)+(.5*\p,0)$) {};  
   \coordinate (a4)  at ($(s4)+(0,.5*\p)$) {};  
   \coordinate (a5)  at ($(s5)+(0,.5*\p)$) {};  


     \draw[-{Latex[scale=1]}, rounded corners=5pt,  color = \colorBlockA] (a0) |- ($(a0)+(-.5*\p-.5,.2)$) |- (s0);
     \draw[-{Latex[scale=1]}, rounded corners=5pt,  color = \colorBlockA, opacity = \opacityLinkB] (a11) -- (s0);
     \draw[-{Latex[scale=1]}, rounded corners=5pt,  color = \colorBlockB] (a12) -| ($(a12)+(-.5,-1)$) |- (s2);
     \draw[-{Latex[scale=1]}, rounded corners=5pt,  color = \colorBlockB] (a12) -| ($(a12)+(-.5,-1)$) |- (s4);
     \draw[-{Latex[scale=1]}, rounded corners=5pt,  color = \colorBlockB, opacity = \opacityLinkC] (a31) |- (s1);
     \draw[-{Latex[scale=1]}, rounded corners=5pt,  color = \colorBlockB] (a2) -- (s1);
     \draw[-{Latex[scale=1]}, rounded corners=5pt,  color = \colorBlockC] (a4) -- (s3);
     \draw[-{Latex[scale=1]}, rounded corners=5pt,  color = \colorBlockC] (a5) -- (s4);
     \draw[-{Latex[scale=1]}, rounded corners=5pt,  color = \colorBlockC] (a32) -| ($(s5)+(.66*\p+.4,0)$) -- (s5);

   \draw[-{Square[scale=1.5, open, fill=white]}, semithick, color = \colorBlockA] (s0) -- (a0);
   \draw[-{Square[scale=1.5, open, fill=white]}, semithick, color = \colorBlockA, opacity = \opacityLinkB] (s1) -- (a11);
   \draw[-{Square[scale=1.5, open, fill=white]}, semithick, color = \colorBlockB] (s1) -- (a12);
   \draw[-{Square[scale=1.5, open, fill=white]}, semithick, color = \colorBlockB] (s2) -- (a2);
   \draw[-{Square[scale=1.5, open, fill=white]}, semithick, color = \colorBlockB, opacity = \opacityLinkC] (s3) -- (a31);
   \draw[-{Square[scale=1.5, open, fill=white]}, semithick, color = \colorBlockC] (s3) -- (a32);
   \draw[-{Square[scale=1.5, open, fill=white]}, semithick, color = \colorBlockC] (s4) -- (a4);
   \draw[-{Square[scale=1.5, open, fill=white]}, semithick, color = \colorBlockC] (s5) -- (a5);

   \node[ color = \colorBlockA] (lbl_a0)  at ($(a0) +(0,-.2)$) {\tiny$+$};
   \node[ color = \colorBlockB] (lbl_a0)  at ($(a12) +(.2,0)$) {\tiny$+$};
   \node[ color = \colorBlockC] (lbl_a0)  at ($(a32) +(-.2,0)$) {\tiny$+$};
   \node[ color = \colorBlockA, opacity = \opacityLinkB] (lbl_a0)  at ($(a11) +(0,-.245)$) {\tiny$-$};
   \node[ color = \colorBlockB, opacity = \opacityLinkC] (lbl_a0)  at ($(a31) +(0,-.245)$) {\tiny$-$};
   \node[ color = \colorBlockB] (lbl_a0)  at ($(a2) +(0,-.245)$) {\tiny$-$};
   \node[ color = \colorBlockC] (lbl_a0)  at ($(a4) +(0,-.245)$) {\tiny$-$};
   \node[ color = \colorBlockC] (lbl_a0)  at ($(a5) +(0,-.245)$) {\tiny$-$};

\end{tikzpicture} & 
  \def\cutAt{3}\definecolor{orangeDIY}{rgb}{.92,.5,.11}
\definecolor{bluePY}{HTML}{1F77B4} 

\begin{tikzpicture}[auto,node distance=8mm,>=latex,scale=\tkzscl]
  \tikzset{arrowPetri/.style={>=latex,rounded corners=5pt}}

    \tikzstyle{round}=[thick,draw=black,circle,inner sep=2pt]

    \def\p{2.2}
    \def\colorBlockA{black}
    \def\colorBlockB{black}
    \def\colorBlockC{black}
    \def\opacityLinkB{1}
    \def\opacityLinkC{1}
    \ifnum\cutAt>0
      \def\colorBlockB{bluePY}
      \def\colorBlockC{bluePY}
        \def\opacityLinkB{.22}
    \fi
    \ifnum\cutAt>1
    \def\colorBlockB{black}
    \def\colorBlockC{bluePY}
      \def\opacityLinkB{1}
        \def\opacityLinkC{.22}
    \fi

    \node[round, color = \colorBlockA] (s0) at (0,0)              {\scriptsize$0$};
    \node[round, color = \colorBlockB] (s1) at ($(s0)+(0,-\p)$) {\scriptsize$1$};
    \node[round, color = \colorBlockB] (s2) at ($(s1)+(0,-\p)$)   {\scriptsize$2$};
    \node[round, color = \colorBlockC] (s3) at ($(s1)+(\p,-\p)$)  {\scriptsize$3$};
    \node[round, color = \colorBlockC] (s4) at ($(s3)+(0,-\p)$)   {\scriptsize$4$};
    \node[round, color = \colorBlockC] (s5) at ($(s4)+(0,-\p)$)   {\scriptsize$5$};

   \coordinate (a0)  at ($(s0)+(0,.5*\p)$) {};  
   \coordinate (a11) at ($(s1)+(0,.5*\p)$) {};  
   \coordinate (a12) at ($(s1)+(-.5*\p,0)$) {};  
   \coordinate (a2)  at ($(s2)+(0,.5*\p)$) {};  
   \coordinate (a31) at ($(s3)+(0,.5*\p)$) {};  
   \coordinate (a32) at ($(s3)+(.5*\p,0)$) {};  
   \coordinate (a4)  at ($(s4)+(0,.5*\p)$) {};  
   \coordinate (a5)  at ($(s5)+(0,.5*\p)$) {};  


     \draw[-{Latex[scale=1]}, rounded corners=5pt,  color = \colorBlockA] (a0) |- ($(a0)+(-.5*\p-.5,.2)$) |- (s0);
     \draw[-{Latex[scale=1]}, rounded corners=5pt,  color = \colorBlockA, opacity = \opacityLinkB] (a11) -- (s0);
     \draw[-{Latex[scale=1]}, rounded corners=5pt,  color = \colorBlockB] (a12) -| ($(a12)+(-.5,-1)$) |- (s2);
     \draw[-{Latex[scale=1]}, rounded corners=5pt,  color = \colorBlockB] (a12) -| ($(a12)+(-.5,-1)$) |- (s4);
     \draw[-{Latex[scale=1]}, rounded corners=5pt,  color = \colorBlockB, opacity = \opacityLinkC] (a31) |- (s1);
     \draw[-{Latex[scale=1]}, rounded corners=5pt,  color = \colorBlockB] (a2) -- (s1);
     \draw[-{Latex[scale=1]}, rounded corners=5pt,  color = \colorBlockC] (a4) -- (s3);
     \draw[-{Latex[scale=1]}, rounded corners=5pt,  color = \colorBlockC] (a5) -- (s4);
     \draw[-{Latex[scale=1]}, rounded corners=5pt,  color = \colorBlockC] (a32) -| ($(s5)+(.66*\p+.4,0)$) -- (s5);

   \draw[-{Square[scale=1.5, open, fill=white]}, semithick, color = \colorBlockA] (s0) -- (a0);
   \draw[-{Square[scale=1.5, open, fill=white]}, semithick, color = \colorBlockA, opacity = \opacityLinkB] (s1) -- (a11);
   \draw[-{Square[scale=1.5, open, fill=white]}, semithick, color = \colorBlockB] (s1) -- (a12);
   \draw[-{Square[scale=1.5, open, fill=white]}, semithick, color = \colorBlockB] (s2) -- (a2);
   \draw[-{Square[scale=1.5, open, fill=white]}, semithick, color = \colorBlockB, opacity = \opacityLinkC] (s3) -- (a31);
   \draw[-{Square[scale=1.5, open, fill=white]}, semithick, color = \colorBlockC] (s3) -- (a32);
   \draw[-{Square[scale=1.5, open, fill=white]}, semithick, color = \colorBlockC] (s4) -- (a4);
   \draw[-{Square[scale=1.5, open, fill=white]}, semithick, color = \colorBlockC] (s5) -- (a5);

   \node[ color = \colorBlockA] (lbl_a0)  at ($(a0) +(0,-.2)$) {\tiny$+$};
   \node[ color = \colorBlockB] (lbl_a0)  at ($(a12) +(.2,0)$) {\tiny$+$};
   \node[ color = \colorBlockC] (lbl_a0)  at ($(a32) +(-.2,0)$) {\tiny$+$};
   \node[ color = \colorBlockA, opacity = \opacityLinkB] (lbl_a0)  at ($(a11) +(0,-.245)$) {\tiny$-$};
   \node[ color = \colorBlockB, opacity = \opacityLinkC] (lbl_a0)  at ($(a31) +(0,-.245)$) {\tiny$-$};
   \node[ color = \colorBlockB] (lbl_a0)  at ($(a2) +(0,-.245)$) {\tiny$-$};
   \node[ color = \colorBlockC] (lbl_a0)  at ($(a4) +(0,-.245)$) {\tiny$-$};
   \node[ color = \colorBlockC] (lbl_a0)  at ($(a5) +(0,-.245)$) {\tiny$-$};

\end{tikzpicture}
\end{tabular}
}\end{center}
\caption{\normalfont{Left: a hierarchical Stochastic Shortest Path configuration (satisfying Assumptions~\ref{ass:hierarchy1} and~\ref{ass:hierarchy2}) with six states. We have $0\leq 1 \leq 2$ and $0\leq 1 \leq 3 \leq 4 \leq 5$, while other pairs need not to be comparable. Middle (resp.\ right): the remaining sub-graph when ruling out actions of $A_1^-$ (resp.\ $A_3^-$), with strongly connected component $\CFC{1}$ (resp.\ $\CFC{3}$) outlined in blue.}} 
\label{fig:three_hierarchies}
\end{figure}
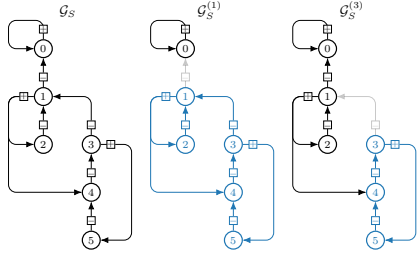

\begin{lemma}\label{lemma:accessible_from_i}
  Suppose that Assumptions~\ref{ass:fluid_SSP_configuration}, \ref{ass:hierarchy1} and~\ref{ass:hierarchy2} hold. For all $i \in \SminusZ$ such that $A_i^+\neq\emptyset$, the following properties hold:

  \begin{enumerate}[label=(\roman*)]
      \item \label{lemma:accessible_from_i:i} $0$ is not accessible from $i$ in $\mathcal{G}^{(i)}_S$,
      \item \label{lemma:accessible_from_i:ii} every state accessible from $i$ in $\mathcal{G}^{(i)}_S$ also has access to $i$ via descending actions.%
  \end{enumerate}

\end{lemma}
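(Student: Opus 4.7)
The plan is to prove (ii) first and then deduce (i) from it. Indeed, under (ii), if $0$ were accessible from $i$ in $\mathcal{G}^{(i)}_S$, then $0$ itself would admit a descending path back to $i$; but in the SSP configuration the only action at $0$ is the self-loop, so $A_0^- = \emptyset$ and $0$ has no outgoing descending edge, which contradicts $i \neq 0$. Hence (i) follows immediately from (ii).

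The engine of the proof of (ii) is a \emph{trapping lemma} that I would establish first: for every $k \in S$ with $A_k^+ \neq \emptyset$, every $a \in A_k^+$, every $j \in \supp(a) \setminus \{k\}$, and every proper policy $\mu$, the $\mu$-descending chain starting at $j$ hits $k$ strictly before $0$ almost surely. To prove it, consider the ``one-ascending-action'' policy $\sigma$ that coincides with $\mu$ everywhere except $\sigma(k) = a$, so that $\sigma$ is improper by Assumption~\ref{ass:hierarchy2}. Writing $P_k(0)$ for the probability of eventually hitting $0$ from $k$ under $\sigma$, the one-step recursion, expressed in terms of the descending-hitting probabilities $q_j := \Pr^{\mu}(\text{hit } k \text{ before } 0 \mid j)$, reduces to $P_k(0) \cdot X = X$ with $X = \sum_{j \in \supp(a)\setminus\{k\}} p^a_j (1-q_j) \geq 0$. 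The value $P_k(0) = 1$ would make $\sigma$ proper (every other state then reaches $k$ under $\mu$-descending, and hence $0$), which is forbidden by Assumption~\ref{ass:hierarchy2}; hence $X = 0$, forcing $q_j = 1$ on $\supp(a)\setminus\{k\}$.

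With the trapping lemma in hand, I would prove (ii) by induction along a shortest path in $\mathcal{G}^{(i)}_S$, establishing the stronger statement that, for every accessible $k$ and every proper $\mu$, the $\mu$-descending chain from $k$ hits $i$ before $0$ almost surely. The base case $k = i$ is trivial. For the inductive step along an edge $k \to k'$ of $\mathcal{G}^{(i)}_S$ realized by an action $a$: if $k = i$, then $a \in A_i^+$ and the trapping lemma at $i$ closes the case; if $k \neq i$ and $a \in A_k^+$, the trapping lemma applied \emph{at} $k$ gives almost-sure descending access from $k'$ to $k$, which concatenates with the inductive hypothesis at $k$ to yield a.s.\ descending access from $k'$ to $i$; if $k \neq i$ and $a \in A_k^-$, I would modify $\mu$ into $\mu'$ by setting $\mu'(k) := a$ (still proper, since fully descending), apply the inductive hypothesis at $k$ to the decomposition $q_k^{\mu'} = \sum_{m \in \supp(a)} p^a_m q_m^{\mu'}$ to force $q_{k'}^{\mu'} = 1$, and observe that the descending chain from $k'$ stays strictly below $k$ and therefore never revisits $k$, so that $\mu'$ and $\mu$ agree on it and $q_{k'}^\mu = q_{k'}^{\mu'} = 1$.

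The hard part will be the ascending sub-case of the induction, because the inductive property is phrased in terms of access through \emph{descending} actions while the link being crossed is itself \emph{ascending}. The trapping lemma stated in full generality (at every $k$ with $A_k^+ \neq \emptyset$, not only at $k = i$) is precisely what closes this gap by supplying the missing descending leg; extracting this generality is the critical use of Assumption~\ref{ass:hierarchy2} in the whole argument.
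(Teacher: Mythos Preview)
Your argument is correct, but it is considerably more elaborate than the paper's. The paper proves (i) first and derives (ii) from it, using only elementary graph-reachability reasoning. The single observation driving everything is that the only edges of the full accessibility graph $\mathcal{G}_S$ missing from $\mathcal{G}^{(i)}_S$ are those leaving $i$ via $A_i^-$. If every state still had access to $0$ in $\mathcal{G}^{(i)}_S$, a spanning-tree construction (as in the proof of \Cref{prop:ass_SSP_is_fluid_regime}) would yield a proper policy with $\sigma(i)\in A_i^+$, contradicting Assumption~\ref{ass:hierarchy2}; hence some state $j$ loses access to $0$, and then any path $j\to 0$ in $\mathcal{G}_S$ must pass through $i$, which forces (i). For (ii), any $j$ reachable from $i$ in $\mathcal{G}^{(i)}_S$ likewise has no access to $0$ there (else $i$ would), so the purely descending path $j\to 0$ guaranteed by Assumption~\ref{ass:hierarchy1} must hit $i$ on the way, giving descending access from $j$ to $i$.

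Your route replaces this global contrapositive by a local probabilistic mechanism: the trapping lemma plus an induction along paths in $\mathcal{G}^{(i)}_S$. The trapping lemma is a clean stand-alone fact and actually yields the stronger conclusion that \emph{every} proper policy's chain hits $i$ before $0$ almost surely (not merely that some descending path exists), which is more than the lemma asks. The price is the induction machinery, especially the $a\in A_k^-$ sub-case with the policy surgery $\mu\mapsto\mu'$ and the order-monotonicity argument that the descending chain from $k'$ never returns to $k$; all of this the paper sidesteps with one sentence. Both proofs exploit Assumption~\ref{ass:hierarchy2} at exactly the same point (turning ``one ascending choice'' into ``improper''), but the paper needs it once, whereas you invoke it at every $k$ with $A_k^+\neq\emptyset$ along the path.
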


Let $i \in \SminusZ$ such that $A_i^+\neq \emptyset$. To prove that it cannot be more interesting in the long run to stay in the class $\CFC{i}$ than playing descending actions to reach state $0$, we focus on the particular evolution associated with~\eqref{eq:SMDP_finite_horizon} in $\CFC{i}$. Observe that the pair
  $(\CFC{i}, A_{\CFC{i}})$, with 
  \( A_{\CFC{i}}\coloneqq A_i^+\;\uplus\biguplus_{\substack{j\in\CFC{i} \\ j\neq i}} A_j\),
  is a consistent subset of states and actions in the sense of~\Cref{remark:reduced_semigroup}. 
  We thus introduce the associated evolution semigroup $\big(\Sg_t\big|_{\CFC{i},\,A_{\CFC{i}}}\big)_{t\geq 0}$, denoted for short by
  $(\widetilde{\Sg}^{(i)}_t)_{t\geq 0}$. %

\begin{lemma}\label{lemma:reduced_game}
  Suppose Assumptions~\ref{ass:hierarchy1} and~\ref{ass:hierarchy2} hold. Let $i$ in $S$ such that $A_i^+\neq \emptyset$. 
Then, %
  the growth rate of $\widetilde{\Sg}^{(i)}$ is uniform on $\CFC{i}$, \ie, of the form $\chi^{(i)}\unbf$, where $\chi^{(i)}\geq\underline{\chi}$.
\end{lemma}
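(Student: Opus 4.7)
The plan is to establish both assertions — uniformity of the growth rate on $\CFC{i}$ and the lower bound $\chi^{(i)} \geq \underline{\chi}$ — by applying the min-over-policies formula of \Cref{prop:affine2} to the restricted semigroup $\widetilde{\Sg}^{(i)}$, combined with two structural facts: the strong connectivity of $\CFC{i}$ in $\mathcal{G}^{(i)}_S$ (driving the uniformity), and the fact that $0 \notin \CFC{i}$, inherited from \Cref{lemma:accessible_from_i}\ref{lemma:accessible_from_i:i} (driving the lower bound). As a preliminary, one checks that the restricted SMDP on $(\CFC{i}, A_{\CFC{i}})$ still satisfies Assumption~\ref{ass:nonzeno}: any policy $\tilde{\sigma}$ of the restriction extends to a full policy $\sigma$ of the original SMDP, and any finite class of the Markov chain induced by $\tilde{\sigma}$ is contained in $\CFC{i}$ and remains a finite class under $\sigma$ (since $\CFC{i}$ is $P^{\sigma}$-invariant by the consistency of $(\CFC{i}, A_{\CFC{i}})$). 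The non-Zeno property for $\sigma$ on that class transfers back to $\tilde{\sigma}$, so \Cref{prop:affine2} applies to $\widetilde{\Sg}^{(i)}$ and yields a well-defined growth-rate vector $\chi(\widetilde{\Sg}^{(i)}) \in \R^{|\CFC{i}|}$.

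For the uniformity, I would exploit strong connectivity as follows: for any $j, k \in \CFC{i}$, a finite path from $k$ to $j$ in $\mathcal{G}^{(i)}_S$ exists, each of its edges being realized by an action of $A_{\CFC{i}}$ — an ascending action in $A_i^+$ when traversing $i$, an arbitrary action in $A_l$ when at any other $l \in \CFC{i}$. This makes the restricted SMDP \emph{communicating} in the classical sense, and one can then propagate equality of growth rates by a strategy-based ping-pong on the value function of the restricted SMDP: starting from $k$, construct a strategy (in the sense of the definition~\eqref{eq:def_value}) that executes the path from $k$ to $j$ via the above actions and switches to a near-optimal strategy from $j$ upon reaching it — reverting to a near-optimal strategy from the current state upon any divergence on a random branch. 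The prefix contributes only a uniformly bounded cost and sojourn time, so dividing by $t$ and letting $t \to \infty$ yields $\chi(\widetilde{\Sg}^{(i)})_k \leq \chi(\widetilde{\Sg}^{(i)})_j$; a symmetric construction gives the reverse inequality, so $\chi(\widetilde{\Sg}^{(i)}) = \chi^{(i)} \unbf$ for some scalar $\chi^{(i)}$.

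For the lower bound, let $\tilde{\sigma}$ be any policy on $(\CFC{i}, A_{\CFC{i}})$ and extend it to a full policy $\sigma$ on $S$ by arbitrary choices outside $\CFC{i}$. By consistency, $\CFC{i}$ is closed under $P^{\sigma}$, so the final classes of $\sigma$ contained in $\CFC{i}$ — together with their invariant measures, costs, and sojourn times — coincide with the final classes of $\tilde{\sigma}$ in the restricted SMDP. Because $0 \notin \CFC{i}$, none of these classes equals $\{0\}$, so the definition~\eqref{eq:chi_lower} of $\underline{\chi}$ forces $\langle \mu^{\tilde{\sigma}}_F, c^{\tilde{\sigma}} \rangle / \langle \mu^{\tilde{\sigma}}_F, t^{\tilde{\sigma}} \rangle \geq \underline{\chi}$ for every such $F$. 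Averaging with the hitting probabilities $\phi^F_i$ (which sum to one since the trajectory is absorbed into the union of final classes of $\CFC{i}$), then minimizing over $\tilde{\sigma}$ in \Cref{prop:affine2} applied to $\widetilde{\Sg}^{(i)}$ at state $i$, yields $\chi(\widetilde{\Sg}^{(i)})_i \geq \underline{\chi}$, whence $\chi^{(i)} \geq \underline{\chi}$ by the uniformity of the previous paragraph. The main technical subtlety lies in the uniformity step: the communicating-MDP result is classical for unit sojourn times but is less directly cited in the semi-Markov setting, so the cleanest route is the ping-pong argument above, whose key point is that the prefix along the path in $\mathcal{G}^{(i)}_S$ contributes only an $O(1)$ perturbation that is averaged out in the large-$t$ limit.
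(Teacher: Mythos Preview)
Your lower-bound argument ($\chi^{(i)} \geq \underline{\chi}$) is correct and coincides with the paper's: any final class of a restricted policy extends to a final class $F\neq\{0\}$ of a full policy (since $0\notin\CFC{i}$), so each ratio entering~\Cref{prop:affine2} is bounded below by $\underline{\chi}$, and the convex combination inherits this bound.

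The uniformity step has a genuine gap. The strategy you build (follow the path $k\to j$, switch to near-optimal play from $j$ on success, switch to near-optimal play from the diverged state otherwise) has long-run growth rate equal to the convex combination $p\,\chi(\widetilde{\Sg}^{(i)})_j + \sum_l q_l\,\chi(\widetilde{\Sg}^{(i)})_l$, where $p>0$ is the probability of traversing the whole path and the $q_l$ are divergence probabilities. This is \emph{not} in general $\leq \chi(\widetilde{\Sg}^{(i)})_j$: the diverged states may have larger growth rates than $j$. So the conclusion $\chi(\widetilde{\Sg}^{(i)})_k \leq \chi(\widetilde{\Sg}^{(i)})_j$ for arbitrary $k,j$ does not follow. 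Two easy repairs: either specialise to $k$ realising the maximum of the $\chi(\widetilde{\Sg}^{(i)})_l$ and bound every diverged term by this maximum, turning the inequality into $\chi_k \leq p\,\chi_j + (1-p)\,\chi_k$, hence $\chi_k\leq\chi_j$ and uniformity; or have the strategy \emph{retry} reaching $j$ after every divergence, so that $j$ is reached almost surely with finite expected prefix cost and time. The paper bypasses strategies altogether and works directly with the formula of \Cref{prop:affine2}: it picks a policy $\sigma$ and a final class $F\subset\CFC{i}$ realising the minimum ratio $\chi^{(i)}$, then uses the strong connectivity of $\CFC{i}$ to build a policy $\sigma'$ (equal to $\sigma$ on $F$) whose \emph{only} final class is $F$; since every state of $\CFC{i}$ is then absorbed into $F$ with probability one under $\sigma'$, \Cref{prop:affine2} gives $\chi(\widetilde{\Sg}^{(i)})\leq\chi^{(i)}\unbf$, hence equality.
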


\newcommand{\minMi}{M}

We are now ready to formulate the bounds on the catch-up times $(\theta_i')_{i\in S}$. Recall that these are defined in~\Cref{thm:reduction_SSP_2} under Assumption~\ref{ass:fluid_regime} for $\lambda$-sink SMDPs, or equivalently under Assumption~\ref{ass:fluid_SSP_configuration} for SMDPs in SSP configuration. In particular we have $\underline{\chi}>0$.

\begin{theorem}\label{thm:bound_SSP}
  Suppose that hierarchical Assumptions~\ref{ass:hierarchy1} and~\ref{ass:hierarchy2} hold.
  Then, we have $\theta'_0 = 0$ and for all $i\in \SminusZ$, $\theta'_i$ consistently verifies by induction:%
  \[ \theta'_i \leq  \max_{a\in A_i^-}\big\{ t^a+\max_{j \in \supp(a)} \theta'_j\big\} + \frac{\minMi}{\chi^{(i)}}\unbm_{A_i^+\neq\emptyset} ,\]
    \noindent where for all $i$ in $S$ such that $A_i^+\neq\emptyset$, %
    $\chi^{(i)}\unbf$ denotes the growth rate vector of the evolution semigroup $\widetilde{\Sg}^{(i)}$ introduced in~\Cref{lemma:reduced_game}. %
  \end{theorem}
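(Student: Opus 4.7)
I would argue by strong induction on the partial order $(\leq)$. The base case is $\theta'_0 = 0$: in the SSP configuration state~$0$ is a cost-free absorbing sink, so the Bellman equation~\eqref{eq:delta_SSP} together with the initial condition $v'(0,s)=0$ on $[-\tmax,0)$ yields $v'(0,\cdot)\equiv 0 = u^*(0)$. For the inductive step, I fix $i\in\SminusZ$, assume the bound for every $j<i$, and set $T_i:=\max_{a\in A_i^-}\{t^a + \max_{j\in\supp(a)}\theta'_j\}$. Because descending actions have support strictly below $i$, the inductive hypothesis gives $v'(j,t-t^a)=u^*(j)$ for every $a\in A_i^-$ and every $j\in\supp(a)$ whenever $t\geq T_i$. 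Combined with the inclusion $A_i^*\subset A_i^-$ provided by Assumptions~\ref{ass:hierarchy1}--\ref{ass:hierarchy2} and~\Cref{thm:finite_time_theoretical}, this forces the minimum over $A_i^-$ in~\eqref{eq:delta_SSP} to coincide with $u^*(i)$, and hence
\[
v'(i,t) = \min\{u^*(i),\,B(t)\},\qquad B(t) := \min_{a\in A_i^+}\Big\{c^a + \sum_{j}p^a_j\,v'(j, t - t^a)\Big\},
\]
for all $t\geq T_i$, with the convention $B\equiv+\infty$ when $A_i^+=\emptyset$. This immediately settles Case~A where $A_i^+=\emptyset$, giving $\theta'_i\leq T_i$.

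In Case~B I have to show $B(t)\geq u^*(i)$ as soon as $t\geq T_i+M/\chi^{(i)}$. The key observation is that on every sub-interval of $[T_i,\infty)$ where $v'(i,t)=B(t)<u^*(i)$, the restriction $v'|_{\CFC{i}}$ obeys exactly the dynamics of the restricted semigroup $\widetilde{\Sg}^{(i)}$: at $i$ only ascending actions are selected, and by consistency of $(\CFC{i},A_{\CFC{i}})$—a consequence of~\Cref{lemma:accessible_from_i}—the Bellman equations at the other states of $\CFC{i}$ only involve values of $v'$ on $\CFC{i}$. I would then introduce the solution $\tilde v$ of $\widetilde{\Sg}^{(i)}$ started from the constant initial condition $\tilde w^0(k,s)\equiv u^*(k)-M$ on $\CFC{i}$. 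Nonexpansiveness of $\Sg^{\Delta}$ applied to the fixed-point trajectory $u^*$ gives $\|v'(t)-u^*\|_\infty\leq M$, so that $v'|_{\CFC{i}}(T_i+\cdot)\geq \tilde w^0$ as initial data. A pointwise induction on $t$, combining monotonicity of $\widetilde{\Sg}^{(i)}$, the Bellman equation for $\tilde v$, and the inequality $\tilde v(i,t)\leq B(T_i+t)$ available throughout the bad regime, then delivers $\tilde v(k,t)\leq v'(k, T_i+t)$ for every $k\in\CFC{i}$ so long as $v'(i,T_i+t)<u^*(i)$.

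To close the argument I invoke~\Cref{lemma:reduced_game} (growth rate $\chi^{(i)}\tilde\unbf$ of $\widetilde{\Sg}^{(i)}$) and~\Cref{prop:affine1}: comparing $\tilde w^0$ to a suitably shifted affine stationary trajectory $s\mapsto \chi^{(i)}s\tilde\unbf+\tilde h$ of $\widetilde{\Sg}^{(i)}$ via additive homogeneity and monotonicity yields the lower bound $\tilde v(i,t)\geq u^*(i)-M+\chi^{(i)} t$. For $t>M/\chi^{(i)}$ this strictly exceeds $u^*(i)$, contradicting $\tilde v(i,t)\leq v'(i,T_i+t)=B(T_i+t)<u^*(i)$ in the bad regime. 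Hence the bad regime has duration at most $M/\chi^{(i)}$, which gives $\theta'_i\leq T_i+M/\chi^{(i)}$ and completes the induction. The main technical obstacle is precisely this sharp quantitative lower bound on $\tilde v(i,t)$, matching the constant $M/\chi^{(i)}$ with no spurious additive term: the reference affine trajectory of $\widetilde{\Sg}^{(i)}$ has to be chosen so that the residual shift reduces exactly to $-M$, exploiting the tight alignment between the offset $u^*-M\tilde\unbf$ and the corrector of $\widetilde{\Sg}^{(i)}$ on $\CFC{i}$ guaranteed by Assumption~\ref{ass:hierarchy2} and the inclusion $A_i^*\subset A_i^-$.
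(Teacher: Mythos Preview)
Your overall strategy coincides with the paper's: strong induction along $(\leq)$, the easy case $A_i^+=\emptyset$, and for $A_i^+\neq\emptyset$ a lower bound of the trajectory on $\CFC{i}$ by a shifted affine stationary regime of $\widetilde{\Sg}^{(i)}$, followed by a contradiction if $\theta'_i>\Theta_i+M/\chi^{(i)}$. The difference lies in how the contradiction is extracted, and that is where your write-up has a genuine gap.

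Comparing the initial datum $u^*|_{\CFC{i}}-M\tilde\unbf$ with the affine regime $s\mapsto \chi^{(i)} s\,\tilde\unbf+h^{(i)}$ only yields, for each $k\in\CFC{i}$,
\[
\tilde v(k,t)\;\geq\;\chi^{(i)} t + h^{(i)}_k - M - \max_{\ell\in\CFC{i}}\bigl(h^{(i)}_\ell-u^*(\ell)\bigr)\,.
\]
Your sharp claim $\tilde v(i,t)\geq u^*(i)-M+\chi^{(i)} t$ is therefore equivalent to the assertion that $i$ itself realises $\max_{k\in\CFC{i}}\bigl(h^{(i)}_k-u^*(k)\bigr)$. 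This is true, but it is not a ``tight alignment'' that falls out of Assumption~\ref{ass:hierarchy2} and $A_i^*\subset A_i^-$ by inspection. One has to argue as follows: for $k\in\CFC{i}\setminus\{i\}$, the $u^*$-optimal action $a^*\in A_k^-$ has $\supp(a^*)\subset\CFC{i}$ (from \Cref{lemma:accessible_from_i}), and subtracting the fixed-point identity for $u^*(k)$ from the corrector inequality for $h^{(i)}_k$ gives $h^{(i)}_k-u^*(k)\leq -\chi^{(i)} t^{a^*}+\sum_j p^{a^*}_j\bigl(h^{(i)}_j-u^*(j)\bigr)$; so any maximiser $k\neq i$ forces $t^{a^*}=0$ and passes the maximum to every $j\in\supp(a^*)\subset\{j<k\}\cap\CFC{i}$, whence a strictly descending chain in $\CFC{i}$ that can only stop at $i$.

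The paper does \emph{not} take this route. It applies the affine lower bound at the (a priori unknown) maximiser $j\in\CFC{i}$, obtaining $\theta'_j\leq\Theta_i+M/\chi^{(i)}$, and then invokes a separate structural lemma (\Cref{lemma:theta_hierarchy}: $\theta'_j\geq\theta'_i$ for every $j\in\CFC{i}$) to transfer the bound to $i$. That lemma is proved by exactly the same ``descending chain in $\CFC{i}$ terminates at $i$'' mechanism sketched above, applied to the catch-up times instead of the corrector. So your variant is legitimate, but the step you flag as the ``main technical obstacle'' hides a lemma of the same weight as the one the paper isolates; without it the argument does not close.
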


  The proof of~\Cref{thm:bound_SSP} makes an extensive use of the fact that the function $v'$ introduced in~\Cref{thm:reduction_SSP_2} is nondecreasing. As indicated in the statement, a different behavior occurs if $A_i^+$ is empty or not. The case $A_i^+=\emptyset$ is rather easy to address: it consists in applying the inductive scheme described earlier and waiting for states lower in the hierarchy to catch up the input. In contrast, the case $A_i^+\neq\emptyset$, in which an ``adverse'' behavior may delay the catch-up by using actions of $A_i^+$, is harder to analyze. The key ingredient of the proof is to bound below the evolution of $v'$ on the class $\CFC{i}$ by an affine stationary regime of the evolution semigroup $\widetilde{\Sg}^{(i)}$. Since the latter has positive growth rate, and $v'$ eventually reaches a constant value on states located lower in the hierarchy, actions of $A_i^+$ cannot be selected for a too long duration.
  
We set $\theta^* \coloneqq \max_{i\in S}\theta'_i$ the maximum of the catch-up times, so that after time $\theta^*$, all states have caught up the input.
  
  \paragraph{\textnormal{\textbf{Example}}} We suppose that in addition to the usual calls arrival rate $\lambda$, our medical emergency call center undergoes the sudden arrival of $M$ extra calls (for instance, corresponding to an event with many casualties) at time $\underline{t}=0$. As proved in~\Cref{coro:value_cv}, if the call center is well-staffed, this peak of calls will ultimately be absorbed and the system shall return to a ``cruise regime'', where the policy $(\sigma(1),\sigma(3))=(a_1^-,a_3^-)$ is applied (recall that the latter is the only proper policy of the system). As already observed and commented in~\Cref{fig:three_hierarchies}, our SMDP modeling the call center satisfies hierarchical assumptions~\ref{ass:hierarchy1} and~\ref{ass:hierarchy2}. As a result, the previous peak is overcome in a finite time $\theta^*$ and we can apply~\Cref{thm:bound_SSP} to bound this duration. We obtain
  \begin{equation} \theta^* = \frac{M}{(\lambda_A\wedge\lambda_P) - \lambda} + t_1 + \frac{M}{\lambda_P-\lambda} + t_2 + t_3\,.\label{eq:finite_time_convergence_example}\end{equation}
  In this transience bound, we identify three terms coming from the mere communication delays between states (the times $t_1$, $t_2$ and $t_3$), and two terms originating from the states that could play ascending actions. The term $M/((\lambda_A\wedge\lambda_P) - \lambda)$ corresponds to a maximum time needed for state $1$ to choose action $a^-_1$. We check that it is proportional to the amount of extra calls to pick up and is governed by the minimum residual handling speed of agents after they performed all the usual tasks, i.e. the throughput on class $\CFC{1}$ of semigroup $(\widetilde{\Sg}_t^{(1)})_{t\geq 0}$, given by either $\lambda_A-\lambda$ or $\lambda_P-\lambda$ depending on which policy realizes the minimal average-cost between $(\sigma(1),\sigma(3))=(a_1^+,a_3^-)$ and $(\sigma(1),\sigma(3))=(a_1^+,a_3^+)$. Similarly, the term $M/(\lambda_P-\lambda)$ bounds the time needed for state $3$ to choose action $a_3^-$ after $\theta_1'+t_1$; the denominator corresponds to the throughput on class $\CFC{3}=\{3,4,5\}$ of semigroup $(\widetilde{\Sg}_t^{(3)})_{t\geq 0}$ (applying policy $\sigma(1),\sigma(3))=(a_1^+,a_3^+)$).
  
  \medskip 

  Building on~\Cref{thm:bound_SSP}, we may give a coarser but simple bound on the global catch-up time $\theta^*$ that emphasizes the tree structure of hierarchical SMDPs. We denote by $d$ the maximal length of a descending path in $S$ relatively to the ordering $(\leq)$, and by $d^+$ the maximal number of states with non-empty set of ascending playable actions along a descending path.

    \begin{corollary}\label{coro:convergence_from_below}Under conditions of~\Cref{thm:bound_SSP}, convergence towards $u^*$ occurs in a time $\theta^*$ such that
    $ \theta^* \leq d \scalebox{.87}{$\;\times\;$} \tmax \,+\, d^+ \scalebox{.87}{$\;\times\;$} \big(M/\underline{\chi}\big)$.%
    \end{corollary}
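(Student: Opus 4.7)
The plan is to unwind the inductive bound of \Cref{thm:bound_SSP} along a worst-case descending chain. Given any state $i \in S$, we iteratively construct a sequence $i = i_0 > i_1 > \cdots > i_k$ as follows: whenever $i_\ell \neq 0$, I pick an action $a_\ell \in A_{i_\ell}^-$ achieving the outer maximum in the inductive bound, and then pick $i_{\ell+1} \in \supp(a_\ell)$ achieving the inner maximum. Assumption~\ref{ass:hierarchy1} forces $\supp(a_\ell) \subset \{j \in S \;|\; j < i_\ell\}$, so the chain strictly decreases in the partial order. Since $A_j^- \neq \emptyset$ is required for every $j \in \SminusZ$, the state $0$ is the unique minimal element of $(S, \leq)$, and the chain must reach $0$ in finitely many steps. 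In particular, $k$ is at most the maximal length $d$ of a descending path in $S$.

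Iterating the bound of \Cref{thm:bound_SSP} along this chain and using $\theta'_0 = 0$, I get
\[
\theta'_i \;\leq\; \sum_{\ell=0}^{k-1} t^{a_\ell} \;+\; \sum_{\ell=0}^{k-1} \frac{M}{\chi^{(i_\ell)}}\, \unbm_{A_{i_\ell}^+ \neq \emptyset} \, .
\]
Each sojourn time satisfies $t^{a_\ell} \leq \tmax$, so the first sum is at most $k \cdot \tmax \leq d \cdot \tmax$. For the second sum, \Cref{lemma:reduced_game} gives $\chi^{(i_\ell)} \geq \underline{\chi}$ whenever $A_{i_\ell}^+ \neq \emptyset$, hence each summand is at most $M/\underline{\chi}$. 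The number of indices $\ell \in \{0,\dots,k-1\}$ such that $A_{i_\ell}^+ \neq \emptyset$ counts states along a descending path with non-empty set of ascending actions, and is therefore bounded by $d^+$. Combining these estimates yields $\theta'_i \leq d \cdot \tmax + d^+ \cdot (M/\underline{\chi})$, and taking the maximum over $i \in S$ gives the claimed bound on $\theta^* = \max_{i \in S} \theta'_i$.

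This is essentially a clean bookkeeping argument once \Cref{thm:bound_SSP} and \Cref{lemma:reduced_game} are in place; no step looks technically hard. The only minor subtlety is ensuring that the greedy chain terminates at $0$ (and not at some other minimal element), which relies on the fact that Assumption~\ref{ass:hierarchy1} forbids minimal elements in $\SminusZ$.
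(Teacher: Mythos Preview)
Your proof is correct and is precisely the argument the paper has in mind: the corollary is stated without proof because it is obtained by unrolling the recursive bound of \Cref{thm:bound_SSP} along a maximizing descending chain, using $\chi^{(i)}\geq\underline{\chi}$ from \Cref{lemma:reduced_game} and $t^a\leq\tmax$. There is nothing to add.
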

 
    This upper bound on the total time of convergence is governed by the ratio $M/\underline{\chi}$, \ie, it is proportional to the magnitude of the bulk that affected the input, and inversely proportional to the growth-rate of the inner part of the system (no taking into account the input). Rephrasing this result in terms of the $\lambda$-sink SMDPs, such as done in the example above, we get that $\underline{\chi}=\lambda'-\lambda$, where $\lambda'$ is an intrinsic throughput of the system and $\lambda$ is the input throughput. In the emergency call center example, $\lambda'$~represents the maximal admissible
    input rate of calls. E.g., in~\eqref{eq:finite_time_convergence_example}, $\lambda'= \lambda_A\wedge\lambda_P$ is the maximal input flow that does not exceed the capacities of treatment of the MRA and physicians.
    Hence, the difference $\lambda'-\lambda$ has an intuitive interpretation as a {\em security margin}, which increases with the staffing.

    Although the results of~\Cref{thm:bound_SSP} and~\Cref{coro:convergence_from_below} were brought to address the question  raised in~\Cref{thm:reduction_SSP_2} of the transience time to catch-up a specific Heaviside-type perturbation in $\lambda$-sink SMDPs, the derived bounds bear some generality. Indeed, any perturbation taking the form of an extra positive cost on the input --- or equivalently an extra negative cost on states of $\SminusZ$ --- possibly not instantaneous but spread over a time window of length smaller than $\tmax$ can be bounded by such a template. This addresses the realistic case where the inner part of the system suffers a delay with respect to the input due to the perturbation. Besides, note that our techniques also allow us to bound the transience time of going back to an input-driven regime starting ``ahead of it''. This would correspond to a ``negative bulk'' of arrivals (\eg, a reduced rate of arrivals, or an absence of arrivals, over a short time period), \ie, the opposite situation to the one considered in our motivating application.
    As an examination of our proofs shows, this case is easier to handle because it is always non-optimal to pick ascending actions, thus resulting in bounds without the $M/\underline{\chi}$ terms.\todo{SG: I explained better the last sentence, but speaking of ``negative bulk'', I think it is clearer.}

      \section*{Concluding remarks}
      We provided an explicit upper bound for the time needed for an emergency call center to absorb a bulk of calls, relying on a continuous time Petri net model. This is based on an analysis of the conditions for convergence in finite time for semi-Markov decision processes. We showed that, under a ``hierarchical'' assumption on the topology of Petri nets, satisfied by emergency call centers, there is a finite absorption time, bounded by an expression whose essential term is of the form $d^+ M/(\lambda' -\lambda)$, where $\lambda$ is the input rate, $\lambda'$ is an intrinsic throughput of the system (with an explicit monotone dependence in the staffing), assuming that $\lambda'>\lambda$, $M$ is the bulk size, and $d^+$ is a constant depending on the topology of the system but not on the staffing.
      Whereas the order $M/(\lambda'-\lambda)$ of our bound
      is optimal, we believe there is still room for improvement for the multiplicative constant $d^+$ that we obtained. We plan to address this issue in further work.

\bibliographystyle{alpha}
\bibliography{transience_bound}
\appendix
\section{Illustration of the SMDP dynamics}
\begin{figure}[htb]
  \centering
  \includegraphics[trim={5 5 0 7},clip]{./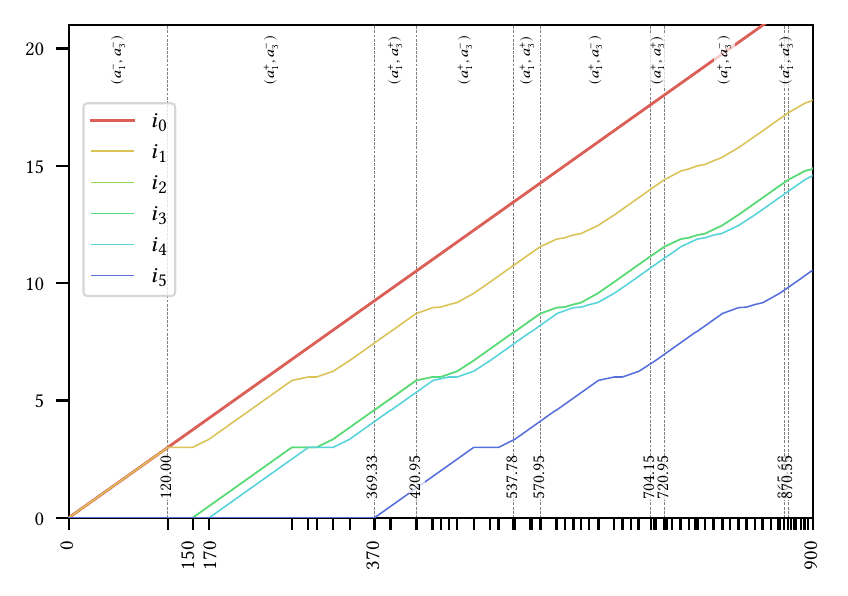}
  \caption{\textnormal{The six components of the value function $v^*$ associated with the SMDP depicted in~\Cref{fig:EMS_A_SMDP}, as functions of the time. The values of the parameters have been set to $\lambda=0.025$, $\pi = 0.3$, $t_{0}$ is arbitrary small, $t_1=150$, $t_2=20$, $t_3=200$, $N_A=3$ and $N_P = 1.375$.}}
  \label{fig:real_value_function}
  \end{figure}

We depict in~\Cref{fig:real_value_function} for a specific choice of parameters the value function $v^*$ associated to the SMDP of~\Cref{fig:EMS_A_SMDP} and previous dynamics~\eqref{SAMU-SMDP}, thus corresponding to our idealized emergency call center. In this SMDP, a total of four policies can be played at any instant (two different actions can indeed be pulled in both states $1$ and $3$), and we mention on the~\Cref{fig:real_value_function} a strategy to be played at every instant to realize $v^*$ in terms of these policies. %
For instance, $v^*(400)$ can be achieved by applying policy $(\sigma(1),\sigma(3))=(a_1^-,a_3^-)$ for 120 seconds, then $(a_1^+,a_3^-)$ until $t\simeq369.33\,\mathrm{s}$ and $(a_1^+,a_3^+)$ until $t=400\,\mathrm{s}$.

\section{Illustration of the evolution semigroup}
The action of the semigroup $\Sg$ is illustrated in \Cref{fig:sin_value_function}.
\begin{figure}[htb]
  \centering
  \includegraphics[trim={5 7 0 7},clip]{./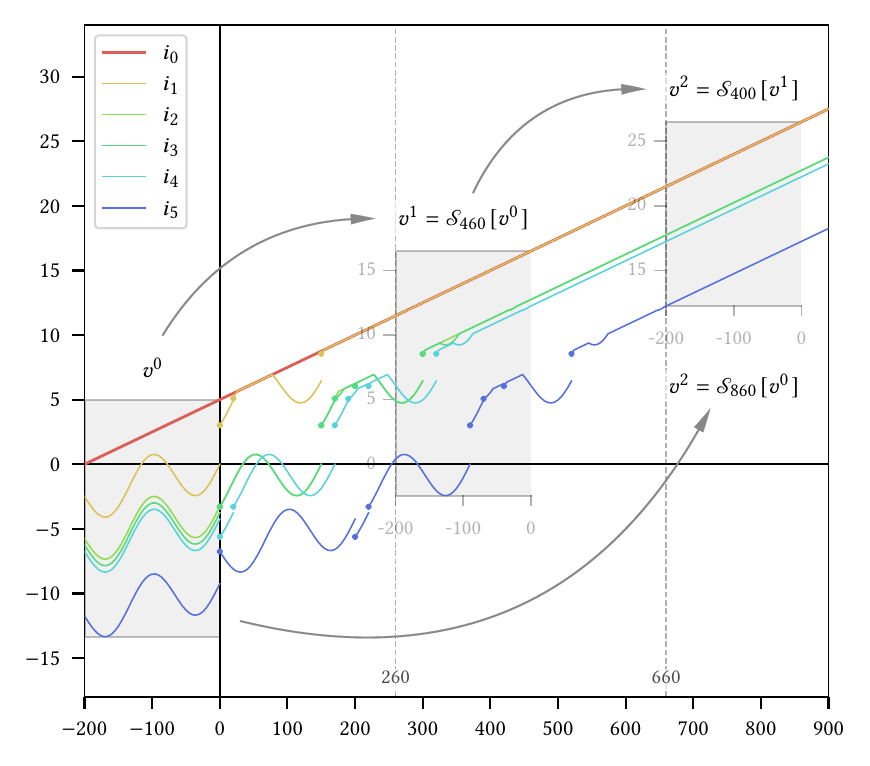}
  \caption{\textnormal{The dynamics~\eqref{eq:SMDP_finite_horizon} of the SMDP of~\Cref{fig:EMS_A_SMDP} applied on an arbitrary initial condition $v^0$ of $\Vcal$ (given by a linear function on coordinate $0$ and by the sum of linear functions with a sine wave for other states), and the action of the semigroup $\Sg$ introduced in~\Cref{def:semigroup} illustrated for evolution periods of 450, 430 and 880 units of time. Parameter values are identical to~\Cref{fig:real_value_function} except $N_A=7$ and $N_P=3.5$. The finite time convergence to a stationary regime is visible on the figure.}}
  \label{fig:sin_value_function}
  \end{figure}

\section{Proof of the results of Section~\ref{ssec:well-posedness}}

The aim of this appendix is to provide the somewhat technical proofs of the main results of Section~\ref{sec:intro_aux_SMDP}. The main difficulty of our SMDP model is that one may perform several moves and incur the corresponding costs in zero time. 

We first show that the finite horizon SMDP problem is well posed. 
\begin{theorem}\label{thm:value_well_defined}
  Under Assumption~\ref{ass:nonzeno}, the value-function $v^*$ is well-defined.
\end{theorem}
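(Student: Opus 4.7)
The plan is to reduce the well-definedness of $v^*$ to a uniform upper bound on the expected number of transitions $\widehat{N_t}$ performed under any strategy within horizon $t$. Since $S$ and $A$ are finite, $c_{\max}:=\max_{a\in A}|c^a|<\infty$, and therefore $\big|\sum_{k=0}^{\widehat{N_t}}\widehat{c}_k\big|\leq c_{\max}(\widehat{N_t}+1)$. Consequently, as soon as $\sup_{f\in\Fbb}\mathbb{E}^f_i(\widehat{N_t})<\infty$ for every $i\in S$ and every $t\geq 0$, each expectation appearing in~\eqref{eq:def_value} is absolutely convergent and the infimum defining $v^*(i,t)$ is a finite real number. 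The whole problem thus reduces to establishing this uniform bound.

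The core step would be a uniform non-accumulation claim: there exist $K\in\N$ and $\beta>0$ such that, for every strategy $f\in\Fbb$, every state $i\in S$, and every history $h$ compatible with $(f,i)$, the conditional probability given $h$ that the next $K$ actions all carry a zero sojourn time is at most $1-\beta$. I would prove this by contradiction. Since the event ``all next $K$ actions are zero-sojourn'' is of pure reachability/avoidance type with respect to the process of (state, action) pairs, a standard Markov decision argument shows that for each fixed $K$ the supremum of its probability over $f\in\Fbb$ is attained by a randomized Markovian strategy $\tilde f_K$. If the claim failed, one would have a sequence $(\tilde f_K)_K$ along which this probability exceeds $1-1/K$. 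By compactness of the product space of probability distributions over $(A_i)_{i\in S}$, one extracts a convergent subsequence with limit $\tilde f^*$; since for each fixed $m$ the probability that the first $m$ actions are zero-sojourn is a polynomial in the parameters of the strategy, passing to the limit shows that under $\tilde f^*$ almost every visited action has zero sojourn time. Derandomizing inside the set of states reachable under $\tilde f^*$ then yields a deterministic policy $\sigma\in\SSigma$ whose induced Markov chain admits a closed recurrent class $F$ with $t^{\sigma(j)}=0$ for every $j\in F$, contradicting Assumption~\ref{ass:nonzeno}.

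Once the claim is established, the proof concludes by a standard renewal and stochastic-domination argument. Grouping transitions into consecutive blocks of size $K$, the number of blocks required to witness at least one positive-sojourn action stochastically dominates a geometric random variable of parameter $\beta$, uniformly in $f$ and $i$. Since each positive-sojourn action contributes at least $\tmin:=\min\{t^a:a\in A,\,t^a>0\}>0$ to the elapsed time, $\widehat{N_t}$ is stochastically dominated, uniformly in $f$ and $i$, by $K$ times a sum of at most $\lceil t/\tmin\rceil$ i.i.d.\ geometric variables of parameter $\beta$, whence $\mathbb{E}^f_i(\widehat{N_t})\leq K(t/\tmin+1)/\beta$, which is the desired uniform bound.

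The hard part is the compactness-and-derandomization step: one must carefully track how a randomized Markovian strategy that avoids positive-sojourn actions almost surely gives rise to a fully deterministic policy whose chain traps itself in a zero-sojourn recurrent class. The reduction from arbitrary history-dependent strategies to randomized Markovian ones, while natural for a Markovian reachability objective, also requires some bookkeeping. The rest of the argument is routine, but it crucially relies on Assumption~\ref{ass:nonzeno} being applicable to all deterministic policies, including those obtained by derandomization.
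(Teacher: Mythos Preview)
Your approach is sound and takes a genuinely different route from the paper's. The paper constructs an auxiliary \emph{layered} MDP with $K=\lceil t/\tmin\rceil$ copies of $S$ plus a cemetery state $\bot$, shows that $\sup_{f}\mathbb{E}^f_i(\widehat{N_t})\leq N^{[K]}(i^{(1)})$ where $N^{[K]}$ is the maximal expected total reward (unit rewards) in this layered SSP, and then proves $N^{[K]}<\infty$ by identifying it as the fixed point of a piecewise-affine operator that is contracting in a weighted sup-norm; the relevant spectral condition $\rho(P^\sigma|_{S_Z\times S_Z})<1$ for every policy $\sigma$ is shown equivalent to Assumption~\ref{ass:nonzeno} via a Collatz--Wielandt / nonlinear Perron--Frobenius argument. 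Your route is purely probabilistic: a uniform bound on the probability of $K$ consecutive zero-sojourn moves, followed by a renewal/domination estimate. The paper's machinery buys a precise fixed-point description of $N^{[K]}$ and, through the lower sandwich bound, a proof that Assumption~\ref{ass:nonzeno} is also \emph{necessary}; your argument is more elementary and delivers the explicit estimate $\mathbb{E}^f_i(\widehat{N_t})\leq K(\lceil t/\tmin\rceil+1)/\beta$ directly.

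Two imprecisions worth tightening. First, the optimal $\tilde f_K$ for the $K$-step objective is Markovian but \emph{time-inhomogeneous}, so it does not live in $\prod_{i\in S}\Delta(A_i)$; take compactness in the countable product $\prod_{k\geq 0}\prod_{i}\Delta(A_i)$ with the product topology and extract by a diagonal argument (the probability of ``first $m$ moves zero-sojourn'' depends only on the first $m$ coordinates, so continuity is preserved). Second, in the renewal step ``stochastically dominates'' should read ``is stochastically dominated by''. Your derandomization does go through: for each state $j$ reachable under $\tilde f^*$, choose $\sigma(j)$ in the support of $\tilde f^*_k(j)$ for some step $k$ at which $j$ is visited; then every successor of $\sigma(j)$ is again reachable, so the reachable set is closed under $\sigma$ and hence contains a final class $F\subset S_Z$ with $t^{\sigma(j)}=0$ for all $j\in F$, contradicting Assumption~\ref{ass:nonzeno}.
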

We can actually show that Assumption~\ref{ass:nonzeno} cannot be weakened.

\begin{lemma}\label{lemma:contains_stochastic_matrix}
  Let $P\in\R^{n\times n}$ be a stochastic matrix (thought as a Markov chain on $\{1,\dots,n\}$). We denote by $\mathcal{F}$ the set of its final classes. Let $J$ be a subset of $\{1,\dots,n\}$ and $P\big|_{J\times J}$ be the square submatrix of $P$ restricted to $J$. Then, this submatrix has spectral radius $1$ if and only if it contains a stochastic submatrix, or alternatively
  \[\rho\big(P\big|_{J\times J}\big) = 1 \;\Longleftrightarrow\; \exists F \in \mathcal{F},\quad F \subset J \]
  \end{lemma}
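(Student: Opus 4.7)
The plan is to treat the two implications separately, using the Perron--Frobenius theorem and the correspondence between nonnegative invariant measures and recurrent sets.

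For the ``if'' direction, suppose there is a final class $F\in\mathcal{F}$ with $F\subset J$. By definition of a final class, $F$ is closed under $P$, so the row sums of $P\big|_{F\times F}$ are all equal to one, making it a stochastic matrix and hence $\rho\big(P\big|_{F\times F}\big)=1$. Since $P\big|_{F\times F}$ is a principal submatrix of the nonnegative matrix $P\big|_{J\times J}$, a standard monotonicity property of the spectral radius of nonnegative matrices (itself a consequence of Perron--Frobenius) yields $\rho\big(P\big|_{F\times F}\big)\leq \rho\big(P\big|_{J\times J}\big)$. On the other hand, $P\big|_{J\times J}$ is a substochastic matrix, so all its row sums are $\leq 1$ and $\rho\big(P\big|_{J\times J}\big)\leq 1$. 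Combining these two inequalities gives $\rho\big(P\big|_{J\times J}\big)=1$.

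For the ``only if'' direction, suppose $\rho\big(P\big|_{J\times J}\big)=1$. By Perron--Frobenius applied to the nonnegative matrix $P\big|_{J\times J}$, there is a nonzero nonnegative left eigenvector $\mu\geq 0$ on $J$ with $\mu^{T}P\big|_{J\times J}=\mu^{T}$. The crucial step is to show that $S\coloneqq\mathrm{supp}(\mu)\subset J$ is closed under $P$. Summing the eigenvector equation over $j\in J$ and using $\sum_{j\in J}P_{ij}\leq 1$ yields $\sum_{i\in J}\mu_i\big(1-\sum_{j\in J}P_{ij}\big)=0$; as every term in the sum is nonnegative, each one vanishes, so $P_{ij}=0$ whenever $i\in S$ and $j\notin J$. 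Next, for $j\in J\setminus S$ the eigenvector equation gives $0=\mu_j=\sum_{i\in S}\mu_i P_{ij}$, forcing $P_{ij}=0$ for $i\in S$, $j\in J\setminus S$. Combining these, for every $i\in S$ we have $\sum_{j\in S}P_{ij}=1$, so $S$ is indeed closed under $P$.

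It follows that $P\big|_{S\times S}$ is a stochastic matrix on a nonempty finite state space, which always admits at least one final class $F$. Because $S$ is closed under $P$, a class that is final for $P\big|_{S\times S}$ is also final for $P$; hence $F\in\mathcal{F}$ and $F\subset S\subset J$, as required. The only mildly technical step in the plan is the two-step support analysis that establishes closedness of $\mathrm{supp}(\mu)$; once that is in hand, the extraction of a final class inside $J$ is routine.
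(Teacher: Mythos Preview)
Your proof is correct. The ``if'' direction is essentially the same as the paper's (both exploit that a closed class yields a stochastic diagonal block, hence an eigenvalue~$1$). For the ``only if'' direction, however, you take a genuinely different route: the paper argues via the canonical block decomposition of $P$ into its transient part $Q_0$ and recurrent blocks $P_1,\dots,P_k$, observing that any \emph{proper} principal submatrix of an irreducible stochastic $P_\ell$ has spectral radius strictly below~$1$, so that $\rho(P|_{J\times J})=1$ forces $J$ to swallow some $F_\ell$ whole. You instead take a left Perron eigenvector $\mu$ for the eigenvalue~$1$ and show by a two-step mass-balance argument that $\mathrm{supp}(\mu)$ is a closed set for $P$, from which a final class inside $J$ is extracted. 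Your approach is more self-contained---it does not presuppose the transient/recurrent decomposition---and the support analysis is a nice, reusable trick; the paper's argument, on the other hand, is shorter once that structural decomposition is taken for granted and makes the role of the final classes visible from the outset.
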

  \begin{proof}
    We take advantage of the decomposition of $P$ adapted to the partition of $\{1,\dots,n\}$ into the transient states $T$ of $P$ and the final classes $F_1$, $F_2$, \dots, $F_k$ of $P$:
    \[ P= \begin{pmatrix}
      \;Q_0\; & \;Q_1\;   & \;Q_2\;    &  \cdots & \;Q_k\; \\
        0 & P_1   &  0     &  \cdots & 0 \\
        0 &   0   & P_2    &  \cdots & 0 \\[-.5ex]
    \vdots & \vdots& \vdots &  \ddots & 0  \\
    0     & 0     & 0      & \cdots  & P_k 
  \end{pmatrix}\,,\]
  where $P_1$,\dots,$P_k$ are stochastic matrices associated with irreducible Markov chains, and $Q_0$ is a substochastic matrix. It is clear that if we have $F_{\ell}\subset J$ for some $1\leq \ell\leq k$, we can find a right eigenvector associated with eigenvalue $1$ for $P\big|_{J\times J}$. Conversely, since any strict submatrix of $P_1$,\dots, $P_k$ is substochastic and then has spectral radius strictly less than $1$, $P\big|_{J\times J}$ cannot admit an eigenvector associated with $1$ if $J$ doesn't contain the whole support of some $P_{\ell}$ for $1\leq \ell\leq k$, i.e. a final class of $P$.
  \end{proof}

\subsection{Well-posedness of the value function (\Cref{thm:value_well_defined})}\label{appendix:Nt}

To show that $\sup_{f\in\Fbb}\mathbb{E}_i^f(\widehat{N}_t)<\infty$, we shall actually study an auxiliary Markov Decision Process (in which the time is therefore not taken into account) with larger state space and equipped with a \emph{layered} structure.

Let $K$ be a positive integer, and define for all $k$ in $\{1,\dots,K\}$ the states of the $k$-th layer:
\[ S^{(k)} \coloneqq \Big\{ i_1^{(k)},i_2^{(k)},\dots i_{|S|}^{(k)}\Big\}\,.\]
We may think of all these layers as exact copies of the initial state space $S$ of our SMDP and so we have natural projections $(\pi_k:S^{(k)}\to S)_{1\leq k\leq K}$ mapping a state of any layer to the state of $S$ from which it is derived (we shall drop the index $k$ and only denote these projections by $\pi$ when nonambiguous). We also introduce a \emph{cemetery} state denoted by $\bot$, so that the complete state space $\widetilde{S}$ of our new MDP writes
\[ \widetilde{S} \coloneqq \Big( \bigcup_{k=1}^K S^{(k)}\Big)\cup \{\bot\}\,.\]
The set $S$ of states of the original SMDP can be partitioned into two sets $S_Z$ and $S_{NZ}$, respectively the \emph{possibly Zeno states}, so that $S_Z\coloneqq \{i\in S\;|\; \exists a \in A_i,\;\; t^a = 0\}$ and the \emph{necessarily non-Zeno states} $S_{NZ} \coloneqq \{i\in S\;|\;\forall a\in A_i,\;\;t^a > 0\}$. This decomposition naturally carries over to the layers $(S^{(k)})_{1\leq k\leq K}$ and we shall later use the partition $S^{(k)}=S^{(k)}_Z \cup S^{(k)}_{NZ}$ for all $k\in\{1,\dots, K\}$.

As for actions, we convene that from the cemetery state, only one action $a_{\bot}$ can be pulled (and then $A_{\bot} = \{a_{\bot}\}$). From any other state $i^{(k)}$ within the $k$-th layer, the player can pick exactly as many actions as in $A_{\pi(i^{(k)})}$:
\[ A_{i^{(k)}} \coloneqq \Big\{ a_1^{(k)},a_2^{(k)},\dots a_{|A_{\pi(i^{(k)})}|}^{(k)}\Big\}\,.\]
Once again there are natural projections from these newly created sets of playable actions towards those of the original SMDP, and for the sake of simplicity we still denote these maps by $\pi$.%

For all $i^{(k)}$ in $S^{(k)}$, the transition probabilities after playing action $a$ in $A_{i^{(k)}}$ are such that 
\[ \text{if}\;\,t^{\pi(a)} > 0,\;\,\text{then}\quad p^a_j\coloneqq \left\{\begin{array}{ll} p^{\pi(a)}_{\pi(j)} & \text{if}\;\, k < K\;\,\text{and}\;\,j\in S^{(k+1)}\\ 1 & \text{if}\;\,k=K\;\,\text{and}\,\; j=\bot \\ 0 & \text{otherwise}\end{array}\right.\]
\[ \text{if}\;\,t^{\pi(a)} = 0,\;\,\text{then}\quad p^a_j\coloneqq \left\{\begin{array}{ll} p^{\pi(a)}_{\pi(j)} & \text{if}\;\, j\in S^{(k)} \\ 0 & \text{otherwise}\end{array}\right.\]
and finally $p^{a_{\bot}}_{\bot} = 1$ (the cemetery state is absorbing).

In other words, all actions in the new game derived from Zeno actions in the original SMDP make the player stay in the current layer, while picking some new action derived from a non-Zeno initial action forces to dive into the next layer (and to join the cemetery state if already in the $K$-th layer). 

Unit rewards $r^a \coloneqq 1$ are attached to all the actions $a\in\widetilde{A}\setminus\{a_{\bot}\}$ of this new MDP except in the cemetery state $\bot$ where no more rewards can be earned, i.e. $r^{a_{\bot}}\coloneqq 0$.

Furthermore, the admissible trajectories $\widetilde{H}_{\infty}$ associated with the new game correspond to those of the initial SMDP $H_{\infty}$, up to potentially hitting $\bot$ after which a single action can be chosen, so the strategies of the original game precisely induce all the strategies of the new one. Since any $f$ in $\Fbb$ and initial state $i$ in $\widetilde{S}$ define a random process $(i,\widehat{a}_0,\widehat{r}_0,\widehat{i}_1,\widehat{a}_1,\widehat{r}_1,\dots)$, we can introduce the vector $N^{\Kbox}$ of $\R^{|\widetilde{S}|}_{\geq 0}$ where 
\begin{equation}\label{eq:number_of_jumps} \text{for all}\;\,i\in\widetilde{S},\quad N^{\Kbox}(i) \coloneqq \sup_{f\in\Fbb} \;\mathbb{E}^f_i\bigg(\sum_{k=0}^{\infty}\widehat{r}_k\bigg)\,.\end{equation}
 
\begin{lemma}\label{lemma:sandwich_N}
Let $t\geq\tmin$ and denote $K\coloneqq \lceil t/\tmin\rceil$. We have for all $i$ in $S$:
\[ N^{\scalebox{.72}{$[1]$}}(i^{(1)}) \;\leq\; \sup_{f\in\Fbb}\mathbb{E}^f_i(\widehat{N}_t) \;\leq\; N^{\Kbox}(i^{(1)})\,,\]
where the second inequality actually holds for all $t\geq 0$.
\end{lemma}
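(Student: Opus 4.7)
The plan is to establish both inequalities by a pathwise coupling between the SMDP trajectories and those of the layered auxiliary MDP: one translates strategies in one game into strategies in the other, compares $\widehat{N}_t$ with the auxiliary MDP's total reward pathwise, and then takes expectations and suprema. The combinatorial heart of the argument is the observation that, along any admissible SMDP trajectory, the number of non-Zeno actions pulled at cumulative time $\leq t$ is bounded by $K=\lceil t/\tmin\rceil$, since each such action contributes at least $\tmin$ to the cumulative time; this matches exactly the capacity of the $K$ layers in the auxiliary MDP before the cemetery $\bot$ is reached.

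For the upper bound, I would take any $f\in\Fbb$ in the SMDP starting at $i$ and construct an auxiliary strategy $\tilde f$ by mimicking $f$ through the projection $\pi$ of layered histories. Since sojourn times are deterministic functions of actions, every auxiliary MDP history determines a unique SMDP history via $\pi$; lifting $f$'s prescribed action to the current layer yields a coupling under which the projected auxiliary trajectory has the same law as the SMDP trajectory under $f$. In this coupling, every SMDP action pulled at cumulative time $\leq t$ corresponds to an auxiliary action performed in some layer $k\leq K$ by the combinatorial bound above, and therefore contributes a unit reward before $\bot$ is reached. Integrating the pathwise inequality $\widehat{N}_t\leq\sum_k\widehat{r}_k$ and taking the supremum over strategies yields $\sup_{f\in\Fbb}\mathbb{E}^f_i(\widehat{N}_t)\leq N^{\Kbox}(i^{(1)})$.

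For the lower bound, I would reverse the construction: given any strategy $\tilde f$ for the one-layer auxiliary MDP starting at $i^{(1)}$, I define an SMDP strategy $f$ that plays the projection $\pi(\tilde f)$ as long as $\tilde f$ remains in layer $1$ and arbitrarily thereafter. Zeno actions consume zero time, so cumulative times stay at $0$ throughout, and the at most one non-Zeno action played by $\tilde f$ before transitioning to $\bot$ is pulled at cumulative time $0$, fitting within the budget because $t\geq\tmin$. Hence every reward-bearing auxiliary action is counted by $\widehat{N}_t$ in the coupled SMDP trajectory, which yields the pathwise inequality $\widehat{N}_t\geq\sum_k\widehat{r}_k$ and, after taking expectations and the supremum, $N^{\scalebox{.72}{$[1]$}}(i^{(1)})\leq\sup_{f\in\Fbb}\mathbb{E}^f_i(\widehat{N}_t)$.

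The main obstacle is making these couplings fully rigorous in the semi-Markov setting with randomized history-dependent strategies: one must build a joint probability space whose marginals match both dynamics, verify measurability of the lift and projection operations on strategies, and carefully track the indexing convention of $\widehat{N}_t$ relative to the reward count, so that edge cases such as actions pulled at cumulative time exactly $t$ are correctly absorbed into the bound.
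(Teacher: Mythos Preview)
Your proposal is correct and follows essentially the same approach as the paper: both arguments rest on the interpretation of $N^{\Kbox}(i^{(1)})$ as the maximum expected number of moves in the original SMDP before $K$ non-Zeno actions have been pulled, together with the observation that $K=\lceil t/\tmin\rceil$ non-Zeno actions cost at least $K\tmin\geq t$ units of physical time, so $\widehat{N}_t$ is bounded above by that count (and, for $t\geq\tmin$, bounded below by the count with $K=1$). The paper states this interpretation directly and informally, whereas you make the coupling explicit by constructing strategy lifts and projections; your more careful write-up is a legitimate elaboration of the same idea, and the residual indexing issues you flag are indeed present in the paper's terse proof as well but are immaterial to the downstream use of the lemma (finiteness of $\sup_f\mathbb{E}^f_i(\widehat{N}_t)$).
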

\begin{proof}
  From what preceeds, we understand that for all $i$ in $S$ and $k$ in $\{1,\dots,K\}$ for the $K$-layered MDP, the quantity $N^{\Kbox}(i^{(k)})$ gives the maximum expected number of moves starting from $i$ and before performing $K+1-k$ non-Zeno moves in the original SMDP. 

In particular for any admissible trajectory of the process, the number $\widehat{N_t}$ of moves performed in this original SMDP before exceeding a total physical elapsed time $t$ must be less than the number of moves before pulling $K$ non-Zeno actions (costing a total time of at least $K\tmin\geq t$ units of time), hence the second inequality. On the other hand if $t\geq \tmin$, it requires to pull at least one non-Zeno action before possibly exceeding a total physical elapsed time of $t$, hence the first inequality. 
\end{proof}

We use hereafter the notation $\unbf$ to denote the vector with all coordinates equal to $1$.

\begin{lemma}\label{lemma:N_is_a_fixed_point_of_B}
Denote $\widetilde{\SSigma}\coloneqq \prod_{i\in\widetilde{S}}\widetilde{A}_i$ the set of policies (or decision rules) in our new MDP. 
Similarly to the original SMDP, the matrix with components $(p^{\widetilde{\sigma}(i)}_j)_{i,j\in \widetilde{S}}$ is denoted by $P^{\widetilde{\sigma}}$. We introduce the self-map $B$ of \(\R^{|\widetilde{S}\setminus \{\bot\}|}\) 

\[ B(x) = \unbf + \max_{\widetilde{\sigma}\in\widetilde{\SSigma}} \big\{P^{\widetilde{\sigma}}\big|_{(\widetilde{S}\setminus \{\bot\})\times (\widetilde{S}\setminus \{\bot\})} x\big\}\]

Then, the vector $N^{\Kbox}$ with its $\bot$-coordinate dropped is a fixed-point of $B$ :
\[ N^{\Kbox}\big|_{\widetilde{S}\setminus\{\bot\}} = B\big(N^{\Kbox}\big|_{\widetilde{S}\setminus\{\bot\}}\big)\]
\end{lemma}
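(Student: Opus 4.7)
The plan is to establish finiteness of $N^{\Kbox}$ and then apply the standard Bellman optimality argument of dynamic programming. First I would note that $N^{\Kbox}(\bot) = 0$: the sole action $a_{\bot}$ available at $\bot$ carries reward $0$ and keeps the process at $\bot$ almost surely, so every trajectory from $\bot$ accumulates zero reward.

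The crucial preliminary step is to show $N^{\Kbox}(i^{(k)}) < \infty$ for every layered state. Within each layer, Zeno actions keep the process in that layer while non-Zeno actions send it to the next layer (or to $\bot$ from the last one), so the total number of moves from $i^{(k)}$ before reaching $\bot$ is the sum, over at most $K+1-k$ visited layers, of the number of consecutive Zeno moves performed in each. For a fixed policy $\sigma$ of the original SMDP (lifted in the natural way), set $S_Z^\sigma := \{i \in S : t^{\sigma(i)} = 0\}$. Assumption~\ref{ass:nonzeno} says that no final class of $P^\sigma$ lies entirely in $S_Z^\sigma$, so Lemma~\ref{lemma:contains_stochastic_matrix} yields $\rho\bigl(P^\sigma|_{S_Z^\sigma \times S_Z^\sigma}\bigr) < 1$. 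Consequently $\sum_n (P^\sigma|_{S_Z^\sigma \times S_Z^\sigma})^n$ converges, bounding the expected number of Zeno moves in a single layer by a finite constant depending on $\sigma$; maximizing over the finite set $\SSigma$ gives a uniform bound, and the classical finite-state/action MDP result with nonnegative rewards ensures that the supremum of expected total reward over history-dependent strategies coincides with the maximum over stationary policies. Summing over the $K+1-k$ layers shows $N^{\Kbox}(i^{(k)}) < \infty$.

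With finiteness in hand, I would apply the Bellman optimality argument. For every $i \in \widetilde{S}\setminus\{\bot\}$,
\[
N^{\Kbox}(i) \;=\; \max_{a \in \widetilde{A}_i}\Bigl\{\,1 + \sum_{j \in \widetilde{S}} p^a_j\, N^{\Kbox}(j)\,\Bigr\}.
\]
The $(\leq)$ direction is obtained by conditioning an arbitrary strategy on its first action: the continuation from each successor is itself a strategy, hence bounded in value by $N^{\Kbox}$ there. The $(\geq)$ direction is obtained by splicing: given $\varepsilon > 0$, pick strategies $f_j$ achieving value at least $N^{\Kbox}(j) - \varepsilon$ from each $j$, and consider the strategy that plays a fixed action $a$ first and then continues with $f_j$ upon reaching $j$; let $\varepsilon \to 0$. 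Both manipulations are justified precisely because $N^{\Kbox}$ is finite.

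Finally, since $N^{\Kbox}(\bot) = 0$, the $\bot$-coordinate contributes nothing to the sum $\sum_j p^a_j N^{\Kbox}(j)$, so the recursion is unchanged when $P^{\widetilde{\sigma}}$ is replaced by its principal submatrix indexed by $\widetilde{S}\setminus\{\bot\}$. The componentwise maximum over actions decouples into a maximum over policies $\widetilde{\sigma} \in \widetilde{\SSigma}$, which is exactly the definition of $B$ applied to $N^{\Kbox}|_{\widetilde{S}\setminus\{\bot\}}$, yielding the fixed-point identity. The main obstacle I foresee is the clean justification of the dynamic programming principle in this semi-infinite horizon setting; the layered structure combined with Assumption~\ref{ass:nonzeno} and Lemma~\ref{lemma:contains_stochastic_matrix} is what makes the reward stream almost-surely summable and allows the required exchanges of supremum and expectation to go through rigorously.
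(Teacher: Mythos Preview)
Your proof is correct but organizes the argument differently from the paper. The paper's proof is essentially a two-line citation: it recognizes $N^{\Kbox}$ as the value of a total-reward MDP with nonnegative rewards and invokes \cite[Theorem~7.1.3]{puterman2014markov}, which yields the optimality equation $N^{\Kbox}=\max_{\widetilde{\sigma}}\{r^{\widetilde{\sigma}}+P^{\widetilde{\sigma}}N^{\Kbox}\}$ directly, valid in $[0,+\infty]$ with no finiteness hypothesis; then $N^{\Kbox}(\bot)=0$ and the block-triangular shape of $P^{\widetilde{\sigma}}$ allow dropping the $\bot$-row and $\bot$-column. You instead first establish finiteness (via Assumption~\ref{ass:nonzeno} and Lemma~\ref{lemma:contains_stochastic_matrix}) and only then derive the Bellman equation by a hands-on conditioning/splicing argument.

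Your route is more self-contained, but it imports Assumption~\ref{ass:nonzeno} into a lemma that the paper proves unconditionally. This matters for the surrounding logic: in the paper, finiteness of $N^{\Kbox}$ is obtained \emph{afterwards} (Proposition~\ref{prop:technical1} and the proof of Theorem~\ref{thm:value_well_defined}) by combining the unconditional fixed-point identity of this lemma with a separate spectral-radius characterization of Assumption~\ref{ass:nonzeno}. In particular, the converse direction of Theorem~\ref{thm:value_well_defined}---that failure of Assumption~\ref{ass:nonzeno} forces some coordinate of $N^{\Kbox}$ to be infinite---relies on invoking the lemma when Assumption~\ref{ass:nonzeno} does \emph{not} hold, which your version does not cover. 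For the forward direction (and hence for the main development of the paper) your approach is perfectly adequate; it simply merges into this lemma part of what the paper postpones to Proposition~\ref{prop:technical1}.
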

\begin{proof}
  We recognize in~\eqref{eq:number_of_jumps} an expected total reward as defined in~\cite[Section 7.1.1]{puterman2014markov}. All the rewards are nonnegative so~\cite[Theorem 7.1.3]{puterman2014markov} applies and we know that $N^{\Kbox}$ verifies an optimality equation of the form 
  \begin{equation}\label{eq:puterman_total} N^{\Kbox} = \max_{\widetilde{\sigma}\in\widetilde{\SSigma}}\big\{r^{\widetilde{\sigma}}+P^{\widetilde{\sigma}} N^{\Kbox} \big\}\,,\end{equation}
  Remark that the vector $r^{\widetilde{\sigma}}$ does not actually depend on $\widetilde{\sigma}\in\widetilde{\SSigma}$ (it has only unit components except on the last coordinate where it is null), and above all, any $P^{\widetilde{\sigma}}$ has the form 

  \hspace{-.2cm}\resizebox{.48\textwidth}{!}{$
  \begin{blockarray}{ccccccccc}
      \begin{block}{(cccccccc)c}
          & & & & & & & & &\\[-2ex]
          P_{Z,Z}^{(1)} & P_{Z,NZ}^{(1)} & Q_{Z,Z}^{(1)} & Q_{Z,NZ}^{(1)} & \cdots & 0 & 0 & 0 & \scalebox{.8}{$S_Z^{(1)}   $} \\[2ex]
          0 & 0 & P_{NZ,Z}^{(1)} & P_{NZ,NZ}^{(1)} & \cdots & 0 & 0 & 0 & \scalebox{.8}{$S_{NZ}^{(1)}$} \\[2ex]
          0 & 0 & P_{Z,Z}^{(2)} & P_{Z,NZ}^{(2)} & \cdots & 0 & 0 & 0 & \scalebox{.8}{$S_Z^{(2)}   $} \\[2ex]
          0 & 0 & 0 & 0 & \cdots & 0 & 0 & 0 & \scalebox{.8}{$S_{NZ}^{(2)}$} \\[2ex]
          \vdots & \vdots & \vdots & \vdots & \ddots & \vdots &\vdots & \vdots & \vdots \\[2ex] 
          0 & 0 & 0 & 0 & \cdots & P_{Z,Z}^{(K)} & P_{Z,NZ}^{(K)} & Q_{Z,\bot}^{(K)} & \scalebox{.8}{$S_Z^{(K)}   $} \\[2ex]
          0 & 0 & 0 & 0 & \cdots & 0 & 0 & \unbf & \scalebox{.8}{$S_{NZ}^{(K)}$} \\[2ex]
          0 & 0 & 0 & 0 & \cdots & 0 & 0 & 1 & \scalebox{.8}{$\bot$} \\[2ex]
      \end{block}
      \scalebox{.8}{$S_Z^{(1)}$} & \scalebox{.8}{$S_{NZ}^{(1)}$} & \scalebox{.8}{$S_Z^{(2)}$} & \scalebox{.8}{$S_{NZ}^{(2)}$} & \cdots & \scalebox{.8}{$S_Z^{(K)}$} & \scalebox{.8}{$S_{NZ}^{(K)}$} & \scalebox{.8}{$\bot$} & \\
  \end{blockarray}$
   }
  where all the blocks depend on $\widetilde{\sigma}$. Indeed, we already justified that playing actions from necessarily non-Zeno states forces to join the next layer, while playing actions from possibly Zeno states can either result in staying on the current layer (if a Zeno action is chosen) or going to the next one (otherwise). 

  We know that $N^{\Kbox}$ satisfies~\eqref{eq:puterman_total} and that $N^{\Kbox}(\bot)=0$ by the definition~\eqref{eq:number_of_jumps}. 
  This last fact actually makes the above two blocks $Q^{(K)}_{Z,\bot}$ and $\unbf$ in the last column not active and we may just write $N^{\Kbox}\big|_{S^{(K)}_Z} = \unbf + P^{(K)}_{Z,Z} N^{\Kbox}\big|_{S^{(K)}_Z} + P^{(K)}_{Z,NZ} N^{\Kbox}\big|_{S^{(K)}_{NZ}}$ and $N^{\Kbox}\big|_{S^{(K)}_{NZ}} = \unbf$. The equation~\eqref{eq:puterman_total} can then be simply refined in the one claimed by the lemma.
\end{proof}

The following proposition is a particular case resulting of operator-based techniques from~\cite{akian2011collatz,akian2019solving}, that shall be very useful in several of our proofs.
\begin{proposition}\label{prop:fixed_point_if_contraction}
  Let $n$ in $\N$ be a positive integer, $\mathcal{E}$ a finite set. Let $(C^{(k)})_{k\in\mathcal{E}}$ be nonnegative matrices of $\R^{n\times n}$ and $(d^{(k)})_{k\in\mathcal{E}}$ be vectors of $\R^n$.%
  
  We introduce the following two piecewise-affine selfs map of $\R^n$:
  \[ \textforall x \textin \R^n, \quad C^{\max}(x) \coloneqq \max_{k\in\mathcal{E}}\big\{ C^{(k)} x + d^{(k)}\big\}\,,\]
  \[ \textforall x \textin \R^n, \quad C^{\min}(x) \coloneqq \min_{k\in\mathcal{E}}\big\{ C^{(k)} x + d^{(k)}\big\}\,,\]
  where the minimum is taken componentwise.
  For $(k_1,\dots,k_n)$ in $\mathcal{E}^n$, we denote by $C^{(k_1\cdots k_n)}$ the $n\times n$-matrix whose $i$-th row is given by the $i$-th row of $C^{(k_i)}$.

    If all matrices $(C^{(k_1\cdots k_n)})_{(k_1,\dots,k_n)\in\mathcal{E}^n}$ have a spectral radius strictly less than one, then
    there exists $\lambda$ in $[0,1)$, and $u$ in $\R^n_{>0}$ such that $C^{\max}$ and $C^{\min}$ are $\lambda$-contracting in weight sup-norm $\Vert\cdot\Vert_u$, and thus
    both $C^{\max}$ and $C^{\min}$ admit a (unique) fixed-point in $\R^n$.
\end{proposition}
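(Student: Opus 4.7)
The plan is to isolate the positively homogeneous ``Lipschitz part'' of $C^{\max}$ and $C^{\min}$, observe that both operators share a single order-preserving sublinear majorant, and then invoke the Collatz--Wielandt theory of~\cite{akian2011collatz,akian2019solving} to produce a common weight vector witnessing contraction.

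First, I would introduce the auxiliary self-map of $\R^n_{\geq 0}$ defined coordinatewise by $\Phi(z) := \max_{k\in\mathcal{E}} C^{(k)} z$, and establish the componentwise Lipschitz estimate
\[
\bigl|C^{\max}(x) - C^{\max}(y)\bigr| \;\leq\; \Phi\bigl(|x-y|\bigr)\quad\text{and}\quad \bigl|C^{\min}(x) - C^{\min}(y)\bigr| \;\leq\; \Phi\bigl(|x-y|\bigr),
\]
where absolute values are taken coordinatewise. The derivation is routine: for each coordinate $i$, picking the index $k$ attaining the maximum (resp.\ minimum) in $C^{\max}(x)_i$ or $C^{\max}(y)_i$ and using the nonnegativity of $C^{(k)}$ to bound $|C^{(k)}(x-y)|$ by $C^{(k)}|x-y|$, yields $|C^{\max}(x)_i - C^{\max}(y)_i| \leq \max_k (C^{(k)} |x-y|)_i = \Phi(|x-y|)_i$, and identically for $C^{\min}$.

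Next, I would produce the weight vector. The operator $\Phi$ is order-preserving and positively homogeneous of degree one on $\R^n_{\geq 0}$, so its Collatz--Wielandt spectral radius $\rho(\Phi) := \inf\{\mu > 0 \,:\, \exists\, u \in \R^n_{>0},\; \Phi(u) \leq \mu u\}$ is well-defined. The crucial identity is
\[
\rho(\Phi) \;=\; \max_{(k_1,\dots,k_n)\in\mathcal{E}^n}\,\rho\bigl(C^{(k_1\cdots k_n)}\bigr).
\]
The inequality $\rho(\Phi) \geq \rho(C^{(k_1\cdots k_n)})$ follows by testing $\Phi$ on the Perron eigenvector of $C^{(k_1\cdots k_n)}$; the reverse follows because any $u > 0$ with $\Phi(u) \leq \mu u$ satisfies $C^{(k_1\cdots k_n)} u \leq \mu u$ for every row-selection $(k_1,\dots,k_n)$, which forces $\rho(C^{(k_1\cdots k_n)}) \leq \mu$ by the Collatz--Wielandt characterization for nonnegative matrices. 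This identity is part of the nonlinear Perron--Frobenius framework of~\cite{akian2011collatz}, which I would cite rather than reprove. By hypothesis $\rho(\Phi) < 1$, so for any $\lambda$ with $\rho(\Phi) < \lambda < 1$, the definition of $\rho(\Phi)$ provides $u \in \R^n_{>0}$ with $\Phi(u) \leq \lambda u$.

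Combining the two ingredients: for any $x, y \in \R^n$ and any coordinate $i$, the bound $|x-y|_i \leq \|x-y\|_u\, u_i$ together with monotonicity and positive homogeneity of $\Phi$ and the estimate $\Phi(u) \leq \lambda u$ gives $|C^{\max}(x) - C^{\max}(y)|_i \leq \Phi(|x-y|)_i \leq \lambda\,\|x-y\|_u\,u_i$, so that $\|C^{\max}(x) - C^{\max}(y)\|_u \leq \lambda \|x-y\|_u$, and identically for $C^{\min}$. Existence and uniqueness of the fixed points then follow from the Banach fixed-point theorem on the complete space $(\R^n, \|\cdot\|_u)$. The main technical obstacle is the spectral radius identity above, which I would handle by a direct appeal to the Collatz--Wielandt results of~\cite{akian2011collatz,akian2019solving}; once this is in place, the remainder of the argument is a standard combination of majorant and fixed-point techniques.
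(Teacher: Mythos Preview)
Your proof is correct and follows essentially the same route as the paper: both introduce the homogeneous majorant $\Phi=\check{C}^{\max}$, invoke the nonlinear Perron--Frobenius identity $\rho(\Phi)=\max_{(k_1,\dots,k_n)}\rho(C^{(k_1\cdots k_n)})$ from~\cite{akian2011collatz}, extract a positive weight vector $u$ with $\Phi(u)\leq\lambda u$ via Collatz--Wielandt, and conclude contraction in $\Vert\cdot\Vert_u$. The only cosmetic difference is that you derive the contraction estimate directly from the componentwise Lipschitz bound $|C^{\max}(x)-C^{\max}(y)|\leq\Phi(|x-y|)$, whereas the paper appeals to the ready-made characterization in~\cite[Theorem~1]{akian2019solving}; the underlying computation is the same.
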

\begin{proof}
  We suppose that for all $(k_1,\cdots,k_n)$ in $\mathcal{E}^n$, we have $\rho\big(C^{(k_1\cdots k_n)}\big)<1$ and we want to show that $C^{\max}$ and $C^{\min}$ admit a fixed-point.

  We first introduce the homogeneous part $\check{C}^{\max}$ (resp.\ $\check{C}^{\min}$) of operator $C^{\max}$ (resp.\ $C^{\min}$) defined by $\check{C}^{\max}(x)\coloneqq \max_{k\in\mathcal{E}}\big\{C^{(k)} x\big\}$ (resp.\ $\check{C}^{\min}(x)\coloneqq \min_{k\in\mathcal{E}}\big\{C^{(k)} x\big\}$) for $x$ in $\R^n$. We want to apply~\cite[Theorem 1]{akian2019solving}, which claims that for $\lambda\geq 0$ and $u\in\R^n_{>0}$, the order-preserving operator $C^{\max}$ (resp.\ $C^{\min}$) of $\R^n$ is $\lambda$-contracting in weighted sup-norm $\Vert\cdot\Vert_u$ if and only if $\check{C}^{\max}(u)\leq\lambda u$ (resp.\ $\check{C}^{\min}(u)\leq\lambda u)$. Actually, since for all $x$ in $\R^n_{\geq 0}$, we have $\zerobf\leq \check{C}^{\min}(x)\leq\check{C}^{\max}(x)$, we shall just focus on $\check{C}^{\max}$ in what follows.

  Although it is not linear (as a supremum of linear maps), we may define a notion of spectral radius for $\check{C}^{\max}$ by 
  \[\rho(\check{C}^{\max}) \coloneqq \sup\big\{\lambda \geq 0\;\big|\; \exists x \in \R^n_{\geq 0}\setminus\{0\},\;\, \check{C}^{\max}(x) = \lambda x\big\}\]
  
  Since all the maps $x\mapsto C^{(k)}x$ are order-preserving on $\R^n_{\geq 0}$, we obtain according to~\cite[Proposition 8.1]{akian2011collatz} that 
  \[ \rho(\check{C}^{\max}) = \max_{(k_1,\dots,k_n)\in\mathcal{E}^n}\,\rho\big(C^{(k_1\cdots k_n)}\big)\,.\]
  (indeed, to satisfy the selection property needed in~\cite[Proposition 8.1]{akian2011collatz}, we here must take into account all the matrices with rows shuffled).

  Therefore, our initial assumption entails that $\rho(\check{C}^{\max})<1$. However, from the generalized Collatz-Wielandt theorem (see~\cite[Theorem 1.1]{akian2011collatz}), it also holds that
  \[ \rho(\check{C}^{\max}) = \inf \big\{ \lambda > 0\;\big|\; \exists x\in \R^n_{>0},\;\, \check{C}^{\max}(x)\leq\lambda x\big\}\,,\]
  so by taking a specific $\lambda$ in $[\rho(\check{C}^{\max}),1)$, we know that there exists a positive vector $u$ such that $\zerobf\leq \check{C}^{\max}(u)\leq \lambda u$. From our previous bound on $\check{C}^{\min}$ and by applying~\cite[Theorem 1]{akian2019solving}, we conclude that the (no longer homogeneous) operators $C^{\max}$ and $C^{\min}$ are $\lambda$-contracting in the weighted sup-norm relative to $u$ with $0\leq \lambda < 1$.
  It immediately follows that both of them admit a (unique) positive fixed-point in $\R^n$.%
  \end{proof}

\begin{proposition}\label{prop:technical1}
The following are equivalent:
\begin{enumerate}[label=(\arabic*)]
\item\label{prop:technical1:item1} the operator $B$ defined in~\Cref{lemma:N_is_a_fixed_point_of_B} admits a (unique) finite positive fixed-point,
\item\label{prop:technical1:item2} for all $\widetilde{\sigma}$ in $\widetilde{\SSigma}$, $\rho\big(P^{\widetilde{\sigma}}\big|_{(\widetilde{S}\setminus \{\bot\})\times (\widetilde{S}\setminus \{\bot\})}\big) < 1$
\item\label{prop:technical1:item3} for all $\sigma$ in $\SSigma$, $\rho\big(P^{\sigma}\big|_{S_Z\times S_Z}\big) < 1$
\item\label{prop:technical1:item4} the conditions of Assumption~\ref{ass:nonzeno} are fulfilled.
\end{enumerate}
\end{proposition}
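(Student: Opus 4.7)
The plan is to close the cycle of implications $(4) \Rightarrow (3) \Rightarrow (2) \Rightarrow (1) \Rightarrow (4)$ by combining the block-triangular structure of $P^{\widetilde{\sigma}}$ exhibited in the proof of \Cref{lemma:N_is_a_fixed_point_of_B} with \Cref{prop:fixed_point_if_contraction} and \Cref{lemma:contains_stochastic_matrix}. The equivalence $(3) \Leftrightarrow (4)$ is a direct application of \Cref{lemma:contains_stochastic_matrix}: the condition $\rho\big(P^{\sigma}\big|_{S_Z \times S_Z}\big) < 1$ (with $S_Z$ here understood as the set of states on which $\sigma$ actually selects a Zeno action, \ie $\{i : t^{\sigma(i)} = 0\}$) is equivalent to the absence of any final class of $P^{\sigma}$ contained in that set, which in turn is equivalent to every final class of $P^{\sigma}$ containing at least one state $i$ with $t^{\sigma(i)} > 0$, \ie to Assumption~\ref{ass:nonzeno}.

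For $(2) \Leftrightarrow (3)$, I would exploit the block-upper-triangular decomposition displayed in the proof of \Cref{lemma:N_is_a_fixed_point_of_B}, in which the only diagonal blocks that do not vanish identically are the $P^{(k)}_{Z,Z}(\widetilde{\sigma})$, so that $\rho\big(P^{\widetilde{\sigma}}\big|_{(\widetilde{S}\setminus\{\bot\})\times(\widetilde{S}\setminus\{\bot\})}\big) = \max_{k} \rho\big(P^{(k)}_{Z,Z}(\widetilde{\sigma})\big)$. A row of $P^{(k)}_{Z,Z}(\widetilde{\sigma})$ indexed by $i\in S^{(k)}_Z$ vanishes precisely when $\widetilde{\sigma}$ selects a non-Zeno action at $i$; otherwise, it equals the $S_Z$-restriction of the corresponding Zeno-action transition vector. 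Because the spectral radius of a nonnegative matrix is unchanged after deleting a zero row together with the matching column, $\rho\big(P^{(k)}_{Z,Z}(\widetilde{\sigma})\big)$ coincides with the spectral radius of the principal submatrix of $P^{\sigma}$ indexed by the Zeno-selected states, where $\sigma$ is the projection of $\widetilde{\sigma}$. As $\widetilde{\sigma}$ ranges over $\widetilde{\SSigma}$, these data range over every instance quantified in $(3)$.

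For $(2) \Rightarrow (1)$, I would apply \Cref{prop:fixed_point_if_contraction} with $\mathcal{E} \coloneqq \widetilde{\SSigma}$, $C^{(\widetilde{\sigma})} \coloneqq P^{\widetilde{\sigma}}\big|_{(\widetilde{S}\setminus\{\bot\})\times(\widetilde{S}\setminus\{\bot\})}$ and $d^{(\widetilde{\sigma})} \coloneqq \unbf$. The key point is that the row-shuffling appearing in the hypothesis of that proposition, where the $i$-th row of $C^{(k_1\cdots k_n)}$ is taken from $C^{(k_i)}$, corresponds exactly to constructing a policy of $\widetilde{\SSigma}$ by selecting an action independently at each state, so the row-shuffled matrices are themselves of the form $P^{\widetilde{\sigma}}\big|_{\cdots}$. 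Condition $(2)$ then produces a weighted sup-norm in which $B$ is contracting, so $B$ admits a unique fixed point; positivity follows from $B(x) \geq \unbf$. For the converse $(1) \Rightarrow (2)$, I would argue by contrapositive: since every $P^{\widetilde{\sigma}}\big|_{\cdots}$ is substochastic, its spectral radius is at most one, and if it equals one for some $\widetilde{\sigma}$, then iterating $B(x) \geq P^{\widetilde{\sigma}}\big|_{\cdots} x + \unbf$ from $x_0 \coloneqq \zerobf$ gives $B^n(x_0) \geq \sum_{k=0}^{n-1}\big(P^{\widetilde{\sigma}}\big|_{\cdots}\big)^k \unbf$; by \Cref{lemma:contains_stochastic_matrix}, $P^{\widetilde{\sigma}}\big|_{\cdots}$ contains a stochastic submatrix on some final class, providing a nonnegative eigenvector of eigenvalue one along which the partial sums diverge, contradicting the existence of any finite fixed point of $B$.

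The main obstacle I expect is the careful bookkeeping needed to establish $(2) \Leftrightarrow (3)$: one must correctly pair a policy $\widetilde{\sigma}$ of the auxiliary MDP with its projection $\sigma$ \emph{together with} the subset $\{i \in S : t^{\sigma(i)} = 0\}$ on which the Zeno branch is active, and then handle the potential zero rows when reducing spectral radii to the $S_Z$-submatrices appearing in $(3)$. The other implications should follow without genuine difficulty once \Cref{prop:fixed_point_if_contraction} and \Cref{lemma:contains_stochastic_matrix} are brought to bear.
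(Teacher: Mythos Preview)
Your proposal is correct and follows essentially the same route as the paper: \Cref{lemma:contains_stochastic_matrix} for $(3)\Leftrightarrow(4)$, the block-triangular decomposition of $P^{\widetilde{\sigma}}$ for $(2)\Leftrightarrow(3)$, \Cref{prop:fixed_point_if_contraction} for $(2)\Rightarrow(1)$, and a contrapositive argument for $(1)\Rightarrow(2)$. The only substantive difference is in that last step: the paper pairs a putative fixed point $x^{\sbullet}$ against a nonnegative invariant measure of the stochastic submatrix (getting $0\geq\mu^T\unbf>0$), whereas you iterate $B$ from $\zerobf$ and show the orbit diverges along a final class---both arguments work, and your explicit handling of the zero rows of $P^{(k)}_{Z,Z}$ together with the $\sigma$-dependent Zeno set $\{i:t^{\sigma(i)}=0\}$ is in fact more careful than the paper's own identification $\rho(P^{(k)}_{Z,Z})=\rho(P^{\sigma_k}|_{S_Z\times S_Z})$, which glosses over exactly the bookkeeping you flag as the main obstacle.
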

\begin{proof}

  \udensdot{The implication~\ref{prop:technical1:item1}\;$\Longrightarrow$\;\ref{prop:technical1:item2}} is an application of~\Cref{prop:fixed_point_if_contraction} to the particular case of the operator $B$ defined in~\Cref{lemma:N_is_a_fixed_point_of_B}, observing that working with policies causes the collection $(P^{\widetilde{\sigma}})_{\widetilde{\sigma}\in\widetilde{\SSigma}}$ to already contain all the matrices that can be constructed by the row-selection procedure. The unique fixed-point of $B$ must lie in $\R^n_{>0}$ because the ``affine part'' of operator $B$ (the terms $(d^{(k)})_{k\in\mathcal{E}}$ of~\Cref{prop:fixed_point_if_contraction}) is positive.

  \udensdot{We prove~\ref{prop:technical1:item2}\;$\Longrightarrow$\;\ref{prop:technical1:item1}} by supposing that $B$ admits a finite positive fixed-point $x^{\sbullet}$ and assuming by contradiction that there is a particular $\widetilde{\sigma}$ in $\widetilde{\SSigma}$ such that $\rho\big(P^{\widetilde{\sigma}}\big|_{(\widetilde{S}\setminus \{\bot\})\times (\widetilde{S}\setminus \{\bot\})}\big) = 1$. By~\Cref{lemma:contains_stochastic_matrix}, the latter matrix contains a stochastic square submatrix, which then admits an invariant measure (i.e. a nonnegative left eigenvector for the eigenvalue $1$). Up to completing this invariant measure by with zeroes, we can construct a nonnegative left eigenvector $\mu\in\R_{\geq 0}^{|\widetilde{S}\setminus\{\bot\}|}$ for the value $1$ for $P^{\widetilde{\sigma}}\big|_{(\widetilde{S}\setminus \{\bot\})\times (\widetilde{S}\setminus \{\bot\})}$. Moreover, the fixed-point property guarantees that 
  \[ x^{\sbullet} \geq P^{\widetilde{\sigma}}\big|_{(\widetilde{S}\setminus \{\bot\})\times (\widetilde{S}\setminus \{\bot\})} x^{\sbullet} + \unbf\,.\]
  Taking the dot-product with the nonnegative vector $\mu$ on both sides leads to $\zerobf \geq \mu^T \unbf$, however $\mu^T \unbf > 0$ (most commonly $\mu$ is chosen normalized so that $\mu^T \unbf = 1$), hence the contradiction.

\udensdot{To obtain~\ref{prop:technical1:item2}$\;\Longleftrightarrow$\;\ref{prop:technical1:item3}}, we remark for example by computing the characteristic polynomial of the big triangular matrix written in the proof of~\Cref{lemma:N_is_a_fixed_point_of_B} that for $\widetilde{\sigma}\in\widetilde{\SSigma}$, we have by denoting $(\sigma_1,\dots,\sigma_K)\in\SSigma^K$ the restrictions of $\widetilde{\sigma}$ to layers $(S^{(k)})_{1\leq k\leq K}$ that  
\[ \rho\big(P^{\widetilde{\sigma}}\big|_{(\widetilde{S}\setminus \{\bot\})\times (\widetilde{S}\setminus \{\bot\})}\big) = \max_{1\leq k\leq K} \rho\big(P^{(k)}_{Z,Z}\big) = \max_{1\leq k\leq K} \rho\big(P^{\sigma_k}\big|_{S_Z\times S_Z}\big)\,,\]
and as a result
\[ \max_{\widetilde{\sigma}\in\widetilde{\SSigma}}\rho\big(P^{\widetilde{\sigma}}\big|_{(\widetilde{S}\setminus \{\bot\})\times (\widetilde{S}\setminus \{\bot\})}\big) = \max_{\sigma\in\SSigma} \rho\big(P^{\sigma}\big|_{S_Z\times S_Z}\big)\,.\]

\udensdot{The equivalence~\ref{prop:technical1:item3}$\;\Longleftrightarrow\;$\ref{prop:technical1:item4}} follows immediately from~\Cref{lemma:contains_stochastic_matrix} since by negation its claim for all policy we have
\[\forall \sigma\in\SSigma,\;\, \rho\big(P^{\sigma}\big|_{S_Z\times S_Z}\big) < 1 \;\Longleftrightarrow\; \forall\sigma\in\SSigma,\;\,\forall F \in \mathcal{F}(\sigma),\;\, F \not\subset S_Z\,,\]
the last assertion corresponding exactly to any final class of any policy having at least one positive sojourn time, i.e. Assumption~\ref{ass:nonzeno}. 
\end{proof}

The previous results allow us to quickly demonstrate~\Cref{thm:value_well_defined}
\begin{proof}[Proof of~\Cref{thm:value_well_defined}]
\udensdot{Suppose that Assumption~\ref{ass:nonzeno} holds.}
Then, by~\Cref{prop:technical1}, the operator $B$ defined in~\Cref{lemma:N_is_a_fixed_point_of_B} admits a unique positive fixed-point, which is finite. However,~\Cref{lemma:N_is_a_fixed_point_of_B} tells us that this fixed-point coincides with $N^{\Kbox}\big|_{\widetilde{S}\setminus\{\bot\}}$. It follows that $N^{\Kbox}$ is finite for all positive $K$ in $\N$. The~\Cref{lemma:sandwich_N} therefore guarantees that for all $i$ in $S$ and $t\geq 0$ that \udensdot{$\sup_{f\in\Fbb}\mathbb{E}^f_i\big(\widehat{N}_t\big) < \infty$}.

\udensdot{Conversely, if Assumption~\ref{ass:nonzeno} is not verified}, we reuse~\Cref{prop:technical1} to conclude that $N^{\scalebox{.72}{$[1]$}}$ must have non-finite value on some coordinate $i$ in $S$. \Cref{lemma:sandwich_N} entails that for the same $i$ and some $t\geq \tmin$ we have \udensdot{$\sup_{f\in\Fbb}\mathbb{E}^f_i\big(\widehat{N}_t\big) = \infty$}.
\end{proof}

\begin{figure}
  \def\tkzscl{.5}
  \begin{tikzpicture}[auto,node distance=8mm,>=latex,scale=\tkzscl]
  \tikzset{arrowPetri/.style={>=latex,rounded corners=5pt}}

    \tikzstyle{round}=[thick,draw=black,circle,inner sep=2pt]

    \def\p{5}
    \def\q{1.5}
    \def\r{1}

    \node[round] (s1) at (0,0)                {\scriptsize$1$};
    \node[round] (s2) at ($(s1)+(\p,-\r)$)    {\scriptsize$2$};
    \node[round] (s3) at ($(s1)+(\r,-1*\p)$)   {\scriptsize$3$};

   \coordinate (a11) at ($(s1)+(\q,0)$) {};  
   \coordinate (a12) at ($(s1)+(0,-\q)$) {};  
   \coordinate (a21) at ($(s2)+(-\q,0)$) {};  
   \coordinate (a22) at ($(s2)+(0,-\q)$) {};  
   \coordinate (a31) at ($(s3)+(0,\q)$) {};  
   \coordinate (a32) at ($(s3)+(\q,0)$) {};  

    \node (lbl_a11) at ($(a11)+(-.1*\r,.7*\r)$) {\tiny$a_{11}:(*,0)$};
    \node (lbl_a12) at ($(a12)+(-.7\r,-.3)$) [rotate=90]{\tiny$a_{12}:(*,0)$};
    \node (lbl_a21) at ($(a21)+(.2*\r,.6*\r)$) {\tiny$a_{21}:(*,3)$};
    \node (lbl_a22) at ($(a22)+(-1.2*\r,0)$) {\tiny$a_{22}:(*,0)$};
    \node (lbl_a31) at ($(a31)+( 1.3*\r,0)$) {\tiny$a_{31}:(*,1)$};
    \node (lbl_a32) at ($(a32)+( 1*\r,.5*\r)$) {\tiny$a_{32}:(*,2)$};   
   
    \node (prob1) at ($(a11)+( 1.2*\r,1.6*\r)$) [rotate=45]{\tiny$1/3$};   
    \node (prob1) at ($(s2)+( 0,1.3*\r)$) [rotate=45]{\tiny$2/3$};   
    \node (prob1) at ($(a22)+( .5*\r,-1.1*\r)$) [rotate=45]{\tiny$1/2$};   
    \node (prob1) at ($(a22)+( -.5*\r,-1.1*\r)$) [rotate=45]{\tiny$1/2$};   
    \node (prob1) at ($(a32)+( 1.3*\r,-1.2\r)$) [rotate=45]{\tiny$3/5$};   
    \node (prob1) at ($(a32)+( 3.3*\r,-.4\r)$) [rotate=45]{\tiny$2/5$};   
   
     \draw[-{Latex[scale=1]}, rounded corners=5pt, line width=.12mm] (a11) -| (s2);
     \draw[-{Latex[scale=1]}, rounded corners=5pt, line width=.12mm] (a11) -| ($(s1)+(2.5*\r,1.5*\r)$) -| (s1);
     \draw[-{Latex[scale=1]}, rounded corners=5pt, line width=.12mm] (a12) |- (s3);
     \draw[-{Latex[scale=1]}, rounded corners=5pt, line width=.12mm] (a31) |- ($(s2)+(-\r,-\r)$) -- (s2);
     \draw[-{Latex[scale=1]}, rounded corners=5pt, line width=.12mm] (a22) |- ($(s3)+( \r, \r)$) -- (s3);
     \draw[-{Latex[scale=1]}, rounded corners=5pt, line width=.12mm] (a22) |- ($(a22)+(\r, -1.5*\r)$) |- (s2);
     \draw[-{Latex[scale=1]}, rounded corners=5pt, line width=.12mm] (a21) -- ($(s1)+( \r,-\r)$) -- (s1);
     \draw[-{Latex[scale=1]}, rounded corners=5pt, line width=.12mm] (a32) -| ($(s2)+( 2*\r,0)$) -- (s2);
     \draw[-{Latex[scale=1]}, rounded corners=5pt, line width=.12mm] (a32) -| ($(s3)+( 2.5*\r,-1.5*\r)$) -| (s3);

   \draw[-{Square[scale=1.5, open, fill=white]}, thick] (s1) -- (a11);
   \draw[-{Square[scale=1.5, open, fill=white]}, thick] (s1) -- (a12);
   \draw[-{Square[scale=1.5, open, fill=white]}, thick] (s2) -- (a21);
   \draw[-{Square[scale=1.5, open, fill=white]}, thick] (s2) -- (a22);
   \draw[-{Square[scale=1.5, open, fill=white]}, thick] (s3) -- (a31);
   \draw[-{Square[scale=1.5, open, fill=white]}, thick] (s3) -- (a32);
\end{tikzpicture}
\end{figure} 
\begin{figure}
  \def\tkzscl{.5}
  \input{./TikZ/zeno_layered.tex} 
\end{figure}

\begin{remark}
  The layered MDP constructed in this subsection is a Stochastic Shortest Path configuration (or should we say a ``Stochastic Longest Path'' since we seek to maximize the number of moves before hitting $\bot$). The assertion~\ref{prop:technical1:item2} of~\Cref{prop:technical1} essentially requires that all the policies of this SSP are proper, hence providing contraction properties used in the proof as observed in~\cite[Proposition 1]{bertsekas1991analysis}.
\end{remark}

\subsection{Dynamics of the value function}\label{appendix:dynamics}

\begin{proof}[Proof of~\Cref{prop:uniquely_determined}]
  We prove the result by showing correctness of the following inductive scheme: ``let $t_0\geq 0$, if $v$ is known on $[-\tmax, t_0)$, then $v$ is uniquely determined on $[t_0,t_0+\tmin)$'', since it suffices to apply this reasoning step by step for $t_0 \in \tmin \N$ to prove the proposition.

  Suppose then that $t_0\geq 0$ is fixed and $v$ is known on $[-\tmax, t_0)$. Let $t\in[t_0,t_0+\tmin)$ be a fixed instant as well. We reuse the partition of the state space $S=S_Z\uplus S_{NZ}$ introduced in the previous subsection. We shall also denote $A_{i,Z}\coloneqq \{ a\in A_i\;|\; t^a = 0\}\neq\emptyset$ if $i$ is in $S_Z$.

  If $i\in S_{NZ}$, $v(i,t)$ is explicitly determined by the~\eqref{eq:SMDP_finite_horizon} equation 
  \begin{equation}\label{eq:vNZ_explicit}
    v(i,t) = \min_{a\in A_i} \bigg\{ c^{a} + \sum_{j\in S} p^{a}_{j}\,v(j, t-t^{a}) \bigg\} \, ,
  \end{equation}
  for by definition, all the $(t^a)_{a\in A_i}$ are positive and therefore the $(t-t^a)_{a\in A_i}$ lie in $[-\tmax+t_0,t_0)$. However, the restriction $v_Z(t) \coloneqq (v(i,t))_{i\in S_Z}$ of $v$ to states of $S_Z$ is solution of the implicit equation     
  \[ \quad v_Z(t) = \varphi_t( v_Z(t))\coloneqq \min\big( \, C_{Z,t}(v_Z(t))\,,\, w_t \,\big) \,,\]
  where we have
  \[ \text{for all}\;\,i\in S_Z,\quad \big[w_t\big]_i\coloneqq \min_{a\in A_i\setminus A_{i,Z}} \bigg\{ c^{a} + \sum_{j\in S} p^{a}_{j}\,v(j, t-t^{a}) \bigg\}\,,\]
  \[ \text{for all}\;\,a\in \biguplus_{i\in S_Z}A_{i,Z},\quad c^a_t \coloneqq c^{a} + \sum_{j\in S_{NZ}} p^{a}_{j}\,v(j, t)\,,\,\textrm{and}\]
  \[ \text{for all}\;\,i\in S_Z,\quad \big[C_{Z,t}(x)\big]_i \coloneqq \min_{a\in A_{i,Z}} \bigg\{ c^{a}_t + \sum_{j\in S_Z} p^{a}_{j}\,x_j \bigg\}\,.\]
  
  Observe that $w_t$ and the $(c^a_t)$ actually depend on $t$ only via positive time-delays, and they are explicitly determined by the knowledge of $v$ on $[-\tmax,t_0)$ as well. We keep the index ``$t$'' nevertheless to emphasize that we shall determine $v_Z$ pointwise on $[t_0,t_0+\tmin)$.
  
  As a self-map of $\R^{|S_Z|}$, observe that $\varphi_t$ is continuous. In addition under Assumption~\ref{ass:nonzeno}, we obtain using~\Cref{prop:technical1} and~\Cref{prop:fixed_point_if_contraction} the existence of $\lambda\in[0,1)$ and $u\in\R^{|S_Z|}_{>0}$ such that $\varphi_t$ is $\lambda$-contracting in weighted sup-norm $\Vert\cdot\Vert_u$ (taking the minimum of the operator $\check{C}^{\min}$ defined there with the constant application $w_t$ doesn't affect its contraction rate). As a result, $v_Z(t)$ arises as the unique fixed-point of $\varphi_t$, not necessarily positive but still finite. This shows that $v(t)$ is uniquely determined for all $t$ in $[t_0,t_0+\tmin)$, hence achieving our inductive scheme.

  Now that we have proved the uniqueness of $v(t)$ and in particular $v_Z(t)$, we can present for the latter a more convenient form. Indeed, we know that $v_Z(t)$ satisfies the following implicit relation:
  \begin{equation}\label{eq:vZ_implicit}
     v_Z(t) = \min\bigg(\min_{\sigma\in\SSigma_Z}\Big\{c^{\sigma}_t\big|_{S_Z}+P^{\sigma}\big|_{S_Z\times S_Z}v_Z(t)\Big\}\,,\,w_t\bigg)\,,
  \end{equation}
  where $\SSigma_Z\coloneqq \prod_{i\in S_Z}A_{i,Z}$ and for $\sigma$ in $\SSigma_Z$, $c^{\sigma}_t|_{S_Z}$ (resp.\ $P^{\sigma}|_{S_Z\times S_Z}$) denotes the vector $(c^{\sigma(i)}_t)_{i\in S_Z}$ (resp.\ the matrix $(p^{\sigma(i)}_j)_{i,j\in S_Z}$) -- the use of the restriction bars is actually not correct but we keep them to help visualize the dimensions of the objects at stake.

  Equation~\eqref{eq:vZ_implicit} holds if and only if for all $\sigma$ in $\SSigma_Z$, we have $v_Z(t) \leq c^{\sigma}_t\big|_{S_Z} + P^{\sigma}\big|_{S_Z\times S_Z}$ and $v_Z(t)\leq w_t$, with an equality achieved for each coordinate. Since by Assumption~\ref{ass:nonzeno}, we have $\rho(P^{\sigma}\big|_{S_Z\times S_Z})<1$ for all $\sigma$ of $\SSigma_Z$, the first set of inequalities can be replaced by $v_Z(t) \leq (I-P^{\sigma}|_{S_Z\times S_Z})^{-1}c^{\sigma}_t\big|_{S_Z}$ for all $\sigma$ in $\SSigma_Z$ (indeed for any stochastic matrix $P$ with $\rho(P) < 1$, the matrix $(I-P)^{-1}$ is nonnegative and as a result order-preserving as it can be seen for instance by a series expansion). We therefore obtain the following explicit expression:
  \begin{equation}\label{eq:vZ_explicit}
    v_Z(t) = \min\bigg(\min_{\sigma\in\SSigma_Z}\Big\{\Big(I-P^{\sigma}\big|_{S_Z\times S_Z}\Big)^{-1}c^{\sigma}_t\big|_{S_Z}\Big\}\,,\,w_t\bigg)
  \end{equation}

  If $v$ is the function determined by $v^0\in\Vcal$ and~\eqref{eq:SMDP_finite_horizon}, we easily show that if $v^0$ is càdlàg (resp.\ piecewise-affine, piecewise-constant), then is so is $v$, by supposing that $v$ is indeed càdlàg (resp.\ piecewise-affine, piecewise-constant) on $[-\tmax, t_0)$ for $t_0\geq 0$ and extending this property on $[t_0, t_0+\tmin)$ using equations~\eqref{eq:vNZ_explicit} and~\eqref{eq:vZ_explicit} which preserve the càdlàg character (resp.\ piecewise-affine, piecewise-constant) and the fact that $t\mapsto (c^a_t)_{a\in A_Z}$ and $t\mapsto w_t$ are càdlàg (resp.\ piecewise-affine, piecewise-constant) mappings of $[t_0, t_0+\tmin)$ as well.

  The remark subsequent to the theorem that claims that the discontinuity points of $v$ must be contained in $\mathcal{D}(v^0)+\sum_{a\in A}t^a\N$, (where $\mathcal{D}(v^0)$ is the countable sets of discontinuities of $v^0$) is obtained by a similar induction scheme.
\end{proof}

\begin{proof}[Proof of~\Cref{thm:value_is_S0}]
Following~\Cref{prop:uniquely_determined}, we denote by $w$ the unique function defined on $[-\tmax,\infty)$ such that $w$ is null over $[-\tmax,0)$ and $w$ satifies~\eqref{eq:SMDP_finite_horizon}. \Cref{prop:uniquely_determined} again guarantees that $w$ is piecewise constant and càdlàg, in particular we have a piecewise constant and càdlàg ``choice of argmin in~\eqref{eq:SMDP_finite_horizon}'' mapping $\sigma:[-\tmax,\infty)\to \SSigma$, such that for all $i$ in $S$ and $t\geq 0$ 
\[ w_i(t) = c^{\sigma_i(t)} + \sum_{j\in S}p^{\sigma_i(t)}_j w_j(t-t^{\sigma_i(t)})\,,\]

When $f$ is a strategy in $\Fbb$, we define:
\[ v^{f}\!(i,t) \coloneqq \mathbb{E}_{(i,t)}^{f}\bigg(\sum_{k=0}^{\widehat{N_t}}\widehat{c}_k\bigg) =  \mathbb{E}_{(i,t)}^{f}\bigg(\sum_{k=0}^{\infty}\widehat{c}_k\unbm_{\widehat{s}_k\leq t}\bigg)\]

Our goal is to prove that for all $i$ in $S$ and $t\geq 0$, we have $w_i(t) = \inf_{f\in\Fbb}v^f(i,t)$.
We now introduce a particular strategy $f^*=(f^*_0,f^*_1,f^*_2,\dots)$ in $\Fbb$, by letting 
\[ \textforall k \textin \N, \;\, \textforall h_k \textin H_k,\;\, f^*_k(h_k) \coloneqq \sigma_{i_k}(\max(t-s_k,0))\]

The strategy $f^*$ is pure but not strictly speaking Markovian: it does depend on the history only through the current state and the current total elapsed time since $0$, or equivalently the time remaining to live before hitting the planning horizon $t$. Although $f^*$ is also not stationary, observe it still has \emph{some form} of stationarity~: the choice of actions to play at different epochs varies but does not depend on the epoch number itself, the same horizon-dependent rule being applied at all epochs. Huang and Guo call these strategies ``horizon-relevant stationary'' (see~\cite{huang2011finite}). This fact shall be central to proving that $v^{f^*}$ verifies~\eqref{eq:SMDP_finite_horizon}. Indeed, let $i$ in $S$ and $t\geq 0$. We have  
\[\begin{aligned}
  v^{f^*}\!(i,t)  &= \mathbb{E}_{(i,t)}^{f^*}(\widehat{c}_0\unbm_{0\leq t}) + \mathbb{E}_{(i,t)}^{f^*}\Big(\sum_{k=1}^{\infty}\widehat{c}_k\unbm_{\widehat{s}_k\leq t}\Big) \\
  &=c^{\sigma_i(t)} + \mathbb{E}_{(i,t)}^{f^*}\bigg(\mathbb{E}_{(i,t)}^{f^*}\Big(\sum_{k=1}^{\infty}\widehat{c}_k\unbm_{\widehat{s}_k\leq t}\;\Big|\; \widehat{s}_0, \widehat{i}_0, \widehat{a}_0, \widehat{s}_1, \widehat{i}_1\Big)\bigg)\\
  &=c^{\sigma_i(t)} + \sum_{j\in S}p^{\sigma_i(t)}_j \mathbb{E}_{(i,t)}^{f^*}\Big(\sum_{k=1}^{\infty}\widehat{c}_k\unbm_{\widehat{s}_k\leq t}\;\Big|\; \widehat{a}_0 = \sigma_i(t), \widehat{i}_1 = j \Big)\\
\end{aligned}\]
where the second equality follows from the law of total expectation and the third one from the fact that $\widehat{s}_0$, $\widehat{i}_0$ and $\widehat{s}_1$ are deterministic under the choice of $\widehat{i}_0=i$ and $\widehat{a}_0$.

Consider an admissible random process $(\widehat{s}_0, \widehat{i}_0, \widehat{a}_0, \widehat{s}_1, \widehat{i}_1, \widehat{a}_1, \dots)$  (called ``process A'') generated by the initial state $i\in S$ and the strategy $f^*_t$ and such that $\widehat{i}_1 = j$ for $j\in S$ fixed. Then, the random process $(\widehat{s}_1-t^{\sigma_i(t)}, \widehat{i}_1, \widehat{a}_1, \widehat{s}_2-t^{\sigma_i(t)}, \widehat{i}_2, \widehat{a_2}, \dots) = (\widehat{s}{}'_0, \widehat{i}{}'_0, \widehat{a}{}'_0, \widehat{s}{}'_1, \widehat{i}{}'_1, \widehat{a}{}'_1, \dots)$ (that we may call ``process B'') has the same law as the random process generated by initial state $j$ and the strategy $f^*_{t-t^{\sigma_i(t)}}$. Indeed the $k$-th chosen action in process B is given by $\sigma_{\widehat{i}{}'_k}(t-t^{\sigma_i(t)}-\widehat{s}{}'_k)$, coinciding with $\sigma_{\widehat{i}_{k+1}}(t-\widehat{s}_{k+1})$, the $(k+1)$-th chosen action in process A. Hence

\[\begin{aligned}
  v^{f^*}\!(i,t) &=c^{\sigma_i(t)} + \sum_{j\in S}p^{\sigma_i(t)}_j \mathbb{E}_{(j,t-t^{\sigma_i(t)})}^{f^*}\Big(\sum_{k=1}^{\infty}\widehat{c}{}'_{k-1}\unbm_{t^{\sigma_i(t)}+\widehat{s}{}'_{k-1}\leq t}\Big)\\
  &=c^{\sigma_i(t)} + \sum_{j\in S}p^{\sigma_i(t)}_j \mathbb{E}_{(j,t-t^{\sigma_i(t)})}^{f^*}\Big(\sum_{k=0}^{\infty}\widehat{c}{}'_{k}\unbm_{\widehat{s}{}'_{k}\leq t-t^{\sigma_i(t)}}\Big)\\
  &=c^{\sigma_i(t)} + \sum_{j\in S}p^{\sigma_i(t)}_j v^{f^*}\!(j,t-t^{\sigma_i(t)})\\
\end{aligned}\]

This identity allows us to prove by induction (consider intervals of the form $[t_0,t_0+\tmin)$ again) that $v^{f^*}$ and $w$ that we already know to coincide on $[-\tmax,0)$ actually coincide on $[-\tmax,\infty)$. In particular, $v^{f^*}$, satisfies equations~\eqref{eq:SMDP_finite_horizon}, and $w\geq v^*$.

We now want to conclude that $w = v^{f^*} \leq v^* = \inf_{f\in\Fbb} v^f$.
Using computations similar to those used for showing the recursive equation verified by $v^{f^*}$, we can demonstrate that if $f = (f_0,f_1,f_2,\dots)$ is a randomized and history-dependent strategy, we have for all $i$ in $S$ and $t\geq 0$
\begin{equation}\label{eq:HR_SMDP}
  v^f(i,t) = \sum_{a\in A_i}f_0(a)\bigg(c^a + \sum_{j\in S}p^a_j v^{f'_a}(j,t-t^a)\bigg)\,,\end{equation}
where for $i$ in $S$ and $a$ in $A_i$, $f'_a$ denotes the randomized and history-dependent strategy $(f'_{a,0},f'_{a,1},\dots)$ with for all $k$ in $\N$ and $h_k$ in $H_k$, $f'_{a,k}(h_k) \coloneqq f_{k+1}((i,a,h_k))$. This equation becomes simpler if $f$ is a Markovian strategy, i.e. when $f_k(h_k) = f_k(i_k)$, because in this case all the $(f'_a)_{a\in A}$ coincide with the strategy $f'\coloneqq(f_1,f_2,\dots)$. We then take advantage of the fact that for any randomized history-dependent strategy $f$ in $\Fbb$, there is a randomized Markovian strategy $f_{\mathrm{MR}}$ in $\Fbb_{\mathrm{MR}}$ so that the random processes generated by $f$ and $f_{\mathrm{MR}}$ have the same law (see~\cite[Theorem 5.5.1]{puterman2014markov}), and as a result $\inf_{f\in \Fbb} v^f = \inf_{f \in \Fbb_{\mathrm{MR}}} v^f$.

We now show that for all $i$ in $S$ and $t$ in $[t_0,t_0+\tmin)$, we have $\inf_{f\in\Fbb_{\textrm{MR}}} v^f(i,t)\geq v^{f^*}\!(i,t)$, by supposing that this inequality holds for all $t$ in $[-\tmax, t_0)$. In a similar way of the proof of~\Cref{prop:uniquely_determined},  we know that this heredity step is trivial on states $i$ in $S_{NZ}$, and thus we focus on the restriction $v^f_Z(t)$ of $v^f$ to states of $S_Z$. We reuse the notation $C_{Z,t_0}$ and $w_{t_0}$ of~\Cref{prop:uniquely_determined}, but add the ``$*$'' symbol to indicate that their definition use the particular function $v^{f^*}$ instead of a general $v$. The fact that $f$ is Markovian and~\eqref{eq:HR_SMDP} gives us 
\[  v^f_Z(t) \geq \min\big( \, C^*_{Z,t}(v^{f'}_Z(t))\,,\, w^*_{t}\,\big)\coloneqq \varphi^*_{t}(v^{f'}_Z(t))\]
Denoting by $f^{(n)}$ the strategy $(f_n,f_{n+1},f_{n+2},\dots)$ if $n$ is in $\N$, we immediately obtain for $\varphi^*_t$ is order-preserving that for all $n$
\[ v^f_Z(t) \geq \big(\varphi^*_t\big)^n\big(v^{f^{(n)}}_Z(t)\big)\]

Now denote for short $x_n\coloneqq v^{f^{(n)}}_Z(t)$ and $x^*\coloneqq v^{f^*}_Z(t)$ and $\varphi\coloneqq \varphi^*_t$ , we claim that $\lim_{n\to\infty} \varphi^n(x_n) = x^*$. Indeed as seen in~\Cref{prop:uniquely_determined}, we have $u>0$ such that $\varphi$ is $\lambda$-contracting in weighted sup-norm $\Vert\cdot\Vert_{u}$ and admits $x^*$ as a fixed-point (for it verifies~\eqref{eq:SMDP_finite_horizon} at $t$). Also observe that following~\Cref{lemma:sandwich_N}, we have $\Vert x_n \Vert_{u} \leq M\coloneqq \max(u)\max_{i\in S_Z}\{N^{\scalebox{.72}{[$\lceil t/\tmin\rceil$]}}(i^{(1)})\}\max_{a\in A}\{|c^a|\}$. So we have for $n$ in $\N$:
\[\begin{aligned}
\varepsilon_n &\coloneqq \Vert \varphi^n(x_n)-x^*\Vert_u \\
& \leq \lambda \Vert \varphi^{n-1}(x_n) - x^*\Vert_u \\
& \leq \lambda\Vert \varphi^{n-1}(x_{n-1})-x^*\Vert_u + \lambda\Vert \varphi^{n-1}(x_n)-\varphi^{n-1}(x_{n-1})\Vert_u \\
& \leq \lambda \varepsilon_{n-1} + 2\lambda^{n-1} M\\
& \leq \lambda^n\varepsilon_0 + 2n\lambda^{n-1}M\qquad \text{by immediate induction}
\end{aligned}\]
Since $\lim_{n\to\infty}\varepsilon_n = 0$, we deduce that for all $f$ in $\Fbb_{\mathrm{MR}}$, $v^f_Z(t)\geq v^{f^*}_Z(t)$, which shows correctness our inductive scheme.

We can finally conclude by the inequalities 
\[ v^{f^*}\!(i,t)\leq \inf_{f\in\Fbb_{\mathrm{MR}}} v^f(i,t)= \inf_{f\in\Fbb} v^f(i,t) = v^*(i,t)\leq v^{f^*}\!(i,t)\]
that the value function $v^*$ is given by the unique solution of the equations~\eqref{eq:SMDP_finite_horizon} with null initial condition.
\end{proof}

\begin{proof}[Proof of the fact that $(\Sg_t)_{t\geq 0}$ is a semigroup]
  First notice that we naturally have $\Sg_{0}$ is the identity mapping of $\Vcal$. Then, take nonnegative numbers $t_1$ and $t_2$, a test function $v^0$ in $\Vcal$ and $s$ in $[-\tmax,0)$. The value $(\Sg_{t_2}\circ\Sg_{t_1})(v^0)(s)$ is equal to $v'(t_2+s)$, where $v'$ is the solution of the dynamics equations determined by the initial condition $\Sg_{t_1}(v^0)$, but the latter is given by $r\mapsto v(t_1+r)$ for $r$ in $[-\tmax,0)$ where $v$ is the solution of the dynamics equations determined by $v^0$. Since $v'$ and $r\mapsto v(t_1+r)$ coincide on $[-\tmax,0)$, they are also equal on $[-\tmax,\infty)$ and we obtain $v'(t_2+s)=v(t_1+t_2+s)$. As a result, $\Sg_{t_2}\circ\Sg_{t_1}=\Sg_{t_1+t_2}$.
\end{proof}  

\begin{proof}[Proof of~\Cref{prop:properties_semigroup}]
To show that the evolution semi-group is additively-homogeneous and order-preserving,  we reuse the inductive scheme of the proof of~\Cref{prop:uniquely_determined} and some of the notation defined there. 

\udensdot{We first prove the additively-homogeneous character}. Suppose $v^0$ and $v^0{}'$ are in $\Vcal$, denote $v$ and $v'$ the functions they uniquely determine on $[-\tmax,\infty)$. In the particular case where $v^0{}' = v^0 + \alpha\tilde{\unbf}$ with $\alpha$ in $\R$, if we suppose that $v'(t) = v(t) + \alpha\unbf$ for all $t$ in $[-\tmax, t_0)$, then we easily see that this property still holds on $[t_0, t_0+\tmin)$ for non-Zeno states using explicit expression~\eqref{eq:vNZ_explicit}. On Zeno states, we make use of the explicit expression~\eqref{eq:vZ_explicit}. Following from the non-Zeno part, it is clear that $w_t' = w_t + \alpha\unbf$, and for all $\sigma$ in $\SSigma$, we also have $c^{\sigma}_t{}' = c^{\sigma}_t + \alpha P^{\sigma}|_{S_{NZ}\times S_Z}\unbf$. However for $P^{\sigma}|_{S_{NZ}\times S_Z}\unbf+P^{\sigma}|_{S_{Z}\times S_Z}\unbf = \unbf$ (fundamental property of stochastic matrices), we also have $c^{\sigma}_t{}' = c^{\sigma}_t + \alpha(I-P^{\sigma}|_{S_Z\times S_Z})^{-1}\unbf$, 
 and it follows readily from~\eqref{eq:vZ_explicit} that $v_Z'(t) = v_Z(t) + \alpha\unbf$, which completes the proof of the additively homogenous character.

\udensdot{For the monotonicity}, take $v^0{}' \geq v^0$ and let us suppose that $v'\geq v$ on $[-\tmax, t_0)$. Using explicit expression~\eqref{eq:vNZ_explicit}, we still obtain that this carries along on $[t_0,t_0+\tmin)$ 
for non-Zeno states. On states of $S_Z$, observe that for a fixed $t$ in $[t_0,t_0+\tmin)$ we have $w'_t\geq w_t$ and $c^a_t{}'\geq c^a_t$ for all $a$, hence explicit expression~\eqref{eq:vZ_explicit} provides $v'_Z(t)\geq v_Z(t)$ by order-preserving character of the employed inverses.

\udensdot{The nonexpansiveness} itself is a consequence of both the additively homogeneous and order-preserving characters, as it was shown by Crandall in Tartar in~\cite{crandall}.

It is clear that \udensdot{$\Sg_t$ is continuous for the uniform topology} for all $t\geq 0$ from the nonexpansiveness for the latter amounts to the $1$-Lipschitz property. The \udensdot{continuity with respect to the pointwise} \udensdot{convergence} is a little more subtle. We build on the fact (for instance following from the proof of~\Cref{prop:uniquely_determined} by induction once again) that for all $t\geq 0$ and $s\in[-\tmax,0)$, the number $\Sg_t[v^0](s)$ continuously depend on finitely many values taken by $v^0\in\Vcal$. Indeed, we can write 
\[ \Sg_t[v^0](s) = \psi_{t+s}\Big(\Big\{v^0(\tau)\;\Big|\;\tau\in \Big(t+s-\sum_{a\in A}t^a\N\Big)\cap [-\tmax, 0)\Big\}\Big)\,,\]
the mapping $\psi_{t+s}$ is continuous because it is obtained by taking minimums and linear combinations (including inversions of Cramer systems, see equations~\eqref{eq:vNZ_explicit} and~\eqref{eq:vZ_explicit}), and the number of values of $v^0$ used is finite as the intersection of a discrete countable set (whose points are all isolated) and a bounded interval. We then use the sequential characterization of the continuity to deduce that if $(v^{0}_{(k)})_{k\in\N}$ converge pointwise towards $v^0_{\infty}$ as functions of $\Vcal$, then for all $t\geq 0$ and for all $s$ in $[-\tmax,0)$, the sequence $(\Sg_t[v^0_{(k)}](s))_{k\in\N}$ converges towards $\Sg_t[v^0_{\infty}](s)$. 
\end{proof}

\subsection{Proofs of the results on SSP}

\begin{proof}[Proof of~\Cref{thm:normalization}]
The result follows from reusing the~\eqref{eq:SMDP_finite_horizon} equations and injecting the definition of $\Delta v^*$:
\[\Delta v^*(i,t)  = \min_{a\in A_i} \bigg\{ c^{a} + \sum_{j\in S} p^{a}_{j}\,\Big(\Delta v^*(j, t-t^{a})+\lambda(t-t^a)\Big) \bigg\}-\lambda t\]
Since for all $a$ in $A$, $\sum_{j\in S}p^a_j=1$, the terms $\lambda t$ cancel out and the reduced cost $c^a-\lambda t^a$ appears.
\end{proof}

\begin{proof}[Proof of~\Cref{prop:ass_SSP_is_fluid_regime}]
  We first prove $\ref{prop:ass_SSP_is_fluid_regime:item1}\,\Longleftrightarrow\,\ref{ass:fluid_regime:item1}$. It is easy to see that the accessibility of $j$ from $i$ also amounts to the existence of a particular policy $\sigma$ in $\SSigma$ and an integer $n$ in $\N$ such that $(P^{\sigma})^n_{ij} > 0$. In this sense, the implication $\ref{prop:ass_SSP_is_fluid_regime:item1}\,\implies\,\ref{ass:fluid_regime:item1}$ is direct. Conversely, suppose that $\ref{ass:fluid_regime:item1}$ holds, and let $\mathcal{T}$ be a directed spanning tree with root $0$ of the graph $\mathcal{G}$, whose nodes set is $S$ and whose arcs set contains $(i,j)\in\mathcal{G}^2$ iff there exists $a\in A_i$ such that $p^a_j>0$. Such a spanning tree indeed exists since every node has access to $0$ and it can be constructed using depth-first search. We now construct a proper policy state-by-state: for any $i$ in $S$, there is a path $i\to i'\to\dots\to 0$ in $\mathcal{T}$, we then choose for $\sigma(i)$ any action in $A_i$ such that $p^a_{i'} > 0$. The accessibility property and the fact that state $0$ is absorbing guarantee that $(P^{\sigma}|_{(\SminusZ)\times(\SminusZ)})^{|S|}$ is strictly substochastic, hence $\rho(P^{\sigma}|_{(\SminusZ)\times(\SminusZ)})<1$ and we have $\lim_{n\to\infty}(P^{\sigma}|_{(\SminusZ)\times(\SminusZ)})^n = 0$, so that $\sigma$ is proper, hence the implication $\ref{ass:fluid_regime:item1}\,\implies\,\ref{prop:ass_SSP_is_fluid_regime:item1}$.

  We continue by showing $\ref{ass:fluid_regime:item2}\,\implies\,\ref{prop:ass_SSP_is_fluid_regime:item2}$. 
  Assume $\ref{ass:fluid_regime:item2}$ holds, so that $\underline{\chi}>0$, and consider an improper policy $\sigma$ in $\SSigma$ of the complete Semi-Markov SSP. For $\sigma$ is improper, it admits a final class $F$ different than $\{0\}$. Denoting by $v^{\sigma}(i,t)$ the expected-cost incurred up to time $t\geq 0$ starting in state $i\in S$ by applying the stationary strategy associated with $\sigma$, we have by~\Cref{prop:affine1} that for all $i$ in $F$, $v^{\sigma}(i,t)\sim \langle \mu^{\sigma}_F, c^{\sigma}\rangle/\langle \mu^{\sigma}_F, t^{\sigma}\rangle t$. Since this slope is positive by~\eqref{eq:chi_lower} and Assumption~\ref{ass:fluid_SSP_configuration}, we deduce that applying the policy $\sigma$ yields a positive infinite total cost starting in states of $F$.
  
  To conclude the proof, we show that $\ref{prop:ass_SSP_is_fluid_regime:item1}\wedge\neg\ref{ass:fluid_regime:item2}\,\implies\,\neg\ref{prop:ass_SSP_is_fluid_regime:item2}$, since it brings $\ref{prop:ass_SSP_is_fluid_regime:item1}\wedge\ref{prop:ass_SSP_is_fluid_regime:item2}\,\implies\,\ref{prop:ass_SSP_is_fluid_regime:item1}\wedge\ref{ass:fluid_regime:item2}$.
  If~\ref{ass:fluid_regime:item2} does not hold, it means by~\eqref{eq:chi_lower} that there exists a policy $\sigma_{\scalebox{.6}{$\mathrm{I}$}}$ of $\SSigma$ and a final class $F$ of $\sigma_{\scalebox{.6}{$\mathrm{I}$}}$ such that $\langle \mu^{\sigma}_F, c^{\sigma_{\scalebox{.6}{$\mathrm{I}$}}}\rangle/\langle \mu^{\sigma}_F, t^{\sigma_{\scalebox{.6}{$\mathrm{I}$}}}\rangle \leq 0$. However, if~\ref{prop:ass_SSP_is_fluid_regime:item1} also holds, it means that we have a proper policy $\sigma_{\scalebox{.6}{$\mathrm{P}$}}$ in $\SSigma$. We can therefore introduce the new policy $\sigma$ in $\SSigma$ such that for all $i$ in $F$, $\sigma(i)\coloneqq \sigma_{\scalebox{.6}{$\mathrm{I}$}}(i)$ and for all $i$ in $S\setminus\{0\}$, $\sigma(i)\coloneqq \sigma_{\scalebox{.6}{$\mathrm{P}$}}(i)$. 
  Let us denote $S'\coloneqq S\setminus(\{0\}\cup F)$, then since $P^{\sigma}|_{S'\times S'}$ is extracted from $P^{\sigma_{\scalebox{.6}{$\mathrm{P}$}}}|_{\SminusZ\times\SminusZ}$, it cannot contain any stochastic submatrix, which means that the policy $\sigma$ admits only $\{0\}$ and $F$ as final classes. The fact that $F\neq\{0\}$ is a final class of policy $\sigma$ naturally means that $\sigma$ is improper. Eventually, by~\Cref{prop:affine1}, we know that for all $i$ in $S$, $v^{\sigma}(i,t) {=}_{t\to\infty} \chi_i t + O(1)$, where $\chi_i$ is a nonnegative linear combination of $0$ and $\langle \mu^{\sigma}_F, c^{\sigma_{\scalebox{.6}{$\mathrm{I}$}}}\rangle/\langle \mu^{\sigma}_F, t^{\sigma_{\scalebox{.6}{$\mathrm{I}$}}}\rangle$. Hence, we have exhibited an improper policy $\sigma$ which does not yield a total infinite total cost, starting from any state, so~\ref{prop:ass_SSP_is_fluid_regime:item2} is false.

\end{proof}

We denote by $v^{\sbullet}$ the constant function of $\Vcal_0$ equal to $u^*$.

\begin{lemma}\label{lemma:v_fixedpoint_of_Sg}
  If $v^0$ in $\Vcal_0$ is a fixed-point of $\Sg_t$ for all $t\geq 0$, then $v^0=v^{\sbullet}$. Conversely, $v^{\sbullet}$ is a fixed-point of $\Sg_t$ for all $t\geq 0$.\end{lemma}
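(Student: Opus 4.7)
The plan is to handle the converse (that $v^{\sbullet}$ is a fixed point) first, and then to show conversely that any fixed point must equal $v^{\sbullet}$.

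For the easier direction, I would define $\bar{v}\colon [-\tmax,\infty)\to\R^{|S|}$ as the constant function $\bar{v}(s)\coloneqq u^*$ and verify that $\bar{v}$ satisfies the SSP dynamic programming equations pointwise: on state $0$, the identity is immediate because all actions in $A_0$ are null-cost and absorbing and $u^*(0)=0$; on every $i\in\SminusZ$, the required relation $u^*(i)=\min_{a\in A_i}\{c^a+\sum_{j\in S} p^a_j\, u^*(j)\}$ is precisely equation~\eqref{eq:fixed_point_of_T}. By the uniqueness statement in~\Cref{prop:uniquely_determined}, $\bar{v}$ is then the unique trajectory generated by the initial condition $v^{\sbullet}$, so that $\Sg_t[v^{\sbullet}](s)=\bar{v}(t+s)=u^*=v^{\sbullet}(s)$ for every $t\geq 0$ and $s\in[-\tmax,0)$.

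For the forward direction, let $v^0\in\Vcal_0$ satisfy $\Sg_t[v^0]=v^0$ for all $t\geq 0$, and denote by $v$ the function on $[-\tmax,\infty)$ that it determines through~\Cref{prop:uniquely_determined}. Unfolding~\eqref{eq:def_semigroup}, the fixed-point hypothesis reads $v(t+s)=v^0(s)$ for every $t\geq 0$ and $s\in[-\tmax,0)$. Given any $s_0,s_1\in[-\tmax,0)$, I would pick $t_0,t_1\geq 0$ with $t_0+s_0=t_1+s_1$, yielding $v^0(s_0)=v(t_0+s_0)=v(t_1+s_1)=v^0(s_1)$; hence $v^0$ is constant on $[-\tmax,0)$, equal to some $c\in\R^{|S|}$, and $v\equiv c$ on $[-\tmax,\infty)$, with $c(0)=0$ since $v^0\in\Vcal_0$.

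Injecting $v\equiv c$ into the SSP dynamic programming equations gives $c(i)=\min_{a\in A_i}\{c^a+\sum_{j\in S}p^a_j\,c(j)\}$ for every $i\in\SminusZ$, so $c$ solves the system~\eqref{eq:fixed_point_of_T}. Under Assumptions~\ref{ass:nonzeno} and~\ref{ass:fluid_SSP_configuration}, \Cref{thm:bertsekas} guarantees that $u^*$ is the unique solution of this system, whence $c=u^*$ and $v^0=v^{\sbullet}$. The only subtle ingredient is the constancy argument for $v^0$: it rests entirely on exploiting the freedom of the time-shift in the definition of the semigroup, and hinges on the fact that the trajectory $v$ is unambiguously defined on all of $[-\tmax,\infty)$, which is precisely what~\Cref{prop:uniquely_determined} provides.
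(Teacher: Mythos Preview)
Your proof is correct and follows essentially the same approach as the paper: in both cases the converse is obtained by checking that the constant function $u^*$ solves the dynamic programming equations and invoking the uniqueness in \Cref{prop:uniquely_determined}, while the forward direction proceeds by showing the fixed point is a constant function (the paper takes $t=-s$ directly, you match $t_0+s_0=t_1+s_1$) and then identifying that constant with $u^*$ via the uniqueness in \Cref{thm:bertsekas}. Your write-up is slightly cleaner in that you appeal to \Cref{prop:uniquely_determined} as a black box for the converse, whereas the paper reruns its inductive scheme on $[t_0,t_0+\tmin)$ intervals; this is a cosmetic difference.
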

\begin{proof}We first prove the first assertion. Suppose we have such a $v^0$, so that 
\[ \forall t\geq 0, \;\; \forall s \in [-\tmax, 0), \quad \Sg_t[v^0](s) = v^0(s)\,.\]
Letting $t = -s$ yields $v^0(s) = v(0)$ for all $s\in[-\tmax,0)$ where $v$ is the function uniquely determined by $v^0$ and~\eqref{eq:SMDP_finite_horizon} (by~\Cref{def:semigroup}). As a result $v^0$ is constant. The same~\eqref{eq:SMDP_finite_horizon} relationship provides its  value since $v(0)$ verifies that for all $i\in S$, 
\[\begin{aligned}v(i,0) &= \min_{a\in A_{i,Z}} \bigg\{ c^{a} + \sum_{j\in S} p^{a}_{j}\,v(j, 0) \bigg\}\wedge \min_{a\not\in A_{i,Z}} \bigg\{ c^{a} + \sum_{j\in S} p^{a}_{j}\,v^0(j,-t^a) \bigg\}\\
  &=\min_{a\in A_i} \bigg\{ c^{a} + \sum_{j\in S} p^{a}_{j}\,v(j, 0) \bigg\}\,,\end{aligned}\]
  where we have reused the fact that $v^0(s)=v(0)$ for $s$ in$[-\tmax, 0)$. Thus $v(0)=T(v(0))$, and since $v$ is null on state $0$, we have $v(0) = u^*$, hence $v^0 = v^{\sbullet}$.

  Conversely, we can reapply the inductive scheme of the proof of~\Cref{prop:uniquely_determined} to show that $\Sg_t[v^{\sbullet}] = v^{\sbullet}$ for all $t\geq 0$. Let us call $v$ the function uniquely determined by $v^{\sbullet}$ and~\eqref{eq:SMDP_finite_horizon}, we suppose that there is $t_0\geq 0$ such that $v(s)=u^*$ for all $s$ in $[-\tmax, t_0)$. If $t\in[t_0,t_0+\tmin)$, we have $v(i,t) = u^*(i)$ for all non-Zeno state $i$ in $S_{NZ}$. By replacing these information in the equations given by~\eqref{eq:SMDP_finite_horizon} for the possibly Zeno states, we see that $v_Z(t)$ is solution of the dynamics if and only $u\coloneqq (u^*|_{NZ},v_Z(t))$ satisfies $u=T(u)$, hence $v_Z(t) = u^*|_Z$, so the induction is shown.
\end{proof}

\begin{lemma}\label{lemma:sandwich_v}
  If there is $v^0\in\Vcal_0$ such that for all $t\geq 0$, we have $\vsb \leq \Sg_t[v^0] \leq v^0$, then $\Sg_t[v^0]$ converges pointwise towards $\vsb$ (and the result also holds if conversely, we have $v^0\leq \Sg_t[v^0] \leq \vsb$).
  \end{lemma}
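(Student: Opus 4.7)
The plan is to use the monotonicity of the semigroup together with the sandwich hypothesis to extract a pointwise limit of the trajectory, and then identify it with $\vsb$ via~\Cref{lemma:v_fixedpoint_of_Sg}.

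First, I would note that applying the monotone operator $\Sg_{t'}$ to the inequality $\Sg_t[v^0] \leq v^0$ (\Cref{prop:properties_semigroup}(ii)) and invoking the semigroup property gives
\begin{equation*}
  \Sg_{t+t'}[v^0] \;=\; \Sg_{t'}\bigl[\Sg_t[v^0]\bigr] \;\leq\; \Sg_{t'}[v^0] \enspace .
\end{equation*}
Hence for every $s \in [-\tmax,0)$ and every coordinate $i \in S$, the map $t \mapsto \Sg_t[v^0](s)_i$ is nonincreasing. Combined with the uniform lower bound $\vsb$, it therefore admits a pointwise limit $\widehat{v} \colon [-\tmax, 0) \to \R^{|S|}$ satisfying $\vsb \leq \widehat{v} \leq v^0$. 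In particular $\widehat{v}$ is bounded; and since the coordinate $v(0,\cdot)$ remains identically null under the SSP dynamics (state $0$ is absorbing with zero cost), we have $\widehat{v}\in\Vcal_0$.

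Second, I would show that $\widehat{v}$ is a fixed point of every $\Sg_{t'}$. For any fixed $t' \geq 0$, the semigroup identity reads $\Sg_{t+t'}[v^0] = \Sg_{t'}\bigl[\Sg_t[v^0]\bigr]$; the left-hand side tends pointwise to $\widehat{v}$ as $t \to \infty$, while the right-hand side tends pointwise to $\Sg_{t'}[\widehat{v}]$ by continuity of $\Sg_{t'}$ in the pointwise convergence topology (\Cref{prop:properties_semigroup}(iv)). Therefore $\Sg_{t'}[\widehat{v}] = \widehat{v}$ for all $t' \geq 0$, and \Cref{lemma:v_fixedpoint_of_Sg} forces $\widehat{v} = \vsb$. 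This proves the claimed pointwise convergence $\Sg_t[v^0] \to \vsb$ as $t \to \infty$.

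The converse case, $v^0 \leq \Sg_t[v^0] \leq \vsb$, is handled symmetrically: the same monotonicity argument yields $\Sg_{t'}[v^0] \leq \Sg_{t+t'}[v^0]$, so the trajectory is now pointwise nondecreasing, and the upper bound $\vsb$ provides a pointwise limit $\widehat{v}\in\Vcal_0$ satisfying $v^0 \leq \widehat{v} \leq \vsb$. Continuity of $\Sg_{t'}$ again shows that $\widehat{v}$ is a common fixed point of the semigroup, so $\widehat{v} = \vsb$ by~\Cref{lemma:v_fixedpoint_of_Sg}. The only mild subtlety in the whole argument is to check that the pointwise limit stays in $\Vcal_0$ so that \Cref{lemma:v_fixedpoint_of_Sg} is applicable; this is immediate from the uniform sandwich bounds and the invariance of the null coordinate on the sink state.
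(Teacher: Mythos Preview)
Your proof is correct and follows essentially the same route as the paper's: monotonicity of the semigroup gives a monotone trajectory, the sandwich bounds yield a pointwise limit in $\Vcal_0$, and pointwise continuity of $\Sg_{t'}$ together with \Cref{lemma:v_fixedpoint_of_Sg} identifies the limit with $\vsb$. The only minor difference is that the paper establishes the fixed-point property by deriving the two inequalities $\Sg_t[\tilde{v}]\leq\tilde{v}$ and $\tilde{v}\leq\Sg_t[\tilde{v}]$ separately, whereas you pass directly to the equality $\Sg_{t'}[\widehat{v}]=\widehat{v}$ by letting $t\to\infty$ on both sides of $\Sg_{t+t'}[v^0]=\Sg_{t'}[\Sg_t[v^0]]$; your version is slightly more direct, and you also make explicit why the limit lies in $\Vcal_0$, a point the paper states without justification.
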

\begin{proof}Indeed, by monotonicity of the semi-group (\Cref{prop:properties_semigroup}) and by the fixed-point property of $v^{\sbullet}$ (\Cref{lemma:v_fixedpoint_of_Sg}), observe that for all $t,t'\geq 0$, we have $\vsb \leq \Sg_{t+t'}[v^0] \leq \Sg_t[v^0]$. If follows that for all $s\in\tInterval$, $t\mapsto \Sg_t[v^0](s)$ is a non-increasing and bounded from below function. Therefore, $\Sg_t[v^0]$ converges pointwise towards a function $\tilde{v}\in\Vcal_0$. Taking the limit when $t'\to\infty$ in $\Sg_{t+t'}[v^0] = \Sg_t[\Sg_{t'}[v^0]] \leq \Sg_{t'}[v^0]$ yields by pointwise continuity of the semigroup (\Cref{prop:properties_semigroup}) $\Sg_t[\tilde{v}]\leq\tilde{v}$ for all $t\geq 0$. On the other hand, for $\tilde{v}$ is a non-increasing limit we have for all $t,t'\geq 0$ that $\tilde{v}\leq \Sg_{t+t'}[v^0] = \Sg_t[\Sg_{t'}[v^0]]$. Letting $t'\to\infty$ entails $\tilde{v}\leq\Sg_t[\tilde{v}]$ for all $t\geq 0$. As a result, $\tilde{v}$ is a fixed-point of $\Sg_t$ in $\Vcal_0$ for all $t\geq 0$, so we must have $\tilde{v}=\vsb$ by~\Cref{lemma:v_fixedpoint_of_Sg}.\end{proof}

\begin{proof}[Proof of~\Cref{thm:SSSP}]
  Take $v^0$ in $\Vcal_0$.
  We let $\delta\coloneqq \Vert v^0 - \vsb\Vert_{\infty}$ and denote by $\Delta\in\RS$ the vector such that $\Delta_0 = 0$ and $\Delta_i = \delta$ if $i\in \SminusZ$. We denote by $\tilde{\Delta}$ the constant function $s\mapsto \Delta$ of $\Vcal_0$. By definition we have $-\tilde{\Delta}\leq v^0 - v^{\sbullet} \leq \tilde{\Delta}$ (finer than a double inequality with $\delta\tilde{\unbf}$ since $v^0$ and $v^{\sbullet}$ are both null on coordinate $0$).
  
  We introduce the two functions $\underline{v}$ and $\overline{v}$ in $\Vcal_0$ defined by $\underline{v} \coloneqq v^{\sbullet}-\tilde{\Delta}$ and $\overline{v} \coloneqq v^{\sbullet} + \tilde{\Delta}$. It is clear that we have 
  \[ \underline{v} \leq v^0 \leq \overline{v}\qquad\text{and}\qquad \underline{v} \leq v^{\sbullet} \leq \overline{v}\]

  The monotonicity of the semi-group for all $t$ (\Cref{prop:properties_semigroup}) and the fixed-point property of $v^{\sbullet}$ (\Cref{lemma:v_fixedpoint_of_Sg}) lead to
  \[ \Sg_t[\underline{v}] \leq \Sg_t[v^0] \leq \Sg_t[\overline{v}]\qquad\text{and}\qquad \Sg_t[\underline{v}] \leq v^{\sbullet} \leq \Sg_t[\overline{v}]\]

  By non-expansiveness of the evolution semi-group (\Cref{prop:properties_semigroup}), we have 
  \[ \Vert \Sg_t[v^{\sbullet}]-\Sg_t[\underline{v}]\Vert_{\infty} \leq \Vert v^{\sbullet} - \underline{v}\Vert_{\infty}\]
  which we can rewrite as follows using the monotonicity of the semi-group (we recall that $\Vcal_0$ is stable under $\Sg_t$ for all $t$): 
  \[ \tilde{\zerobf}\leq \Sg_t[v^{\sbullet}]-\Sg_t[\underline{v}] \leq \tilde{\Delta}\]
  By the fixed-point property once again, the right inequality above provides $\underline{v}=v^{\sbullet}-\tilde{\Delta}\leq \Sg_t[\underline{v}]$.

  Conversely, we also have by similar arguments that
  \[ \Vert \Sg_t[v^{\sbullet}]-\Sg_t[\overline{v}]\Vert_{\infty} \leq \Vert v^{\sbullet} - \overline{v}\Vert_{\infty}\]
  which leads to the double inequality
  \[ -\tilde{\Delta}\leq \Sg_t[v^{\sbullet}]-\Sg_t[\overline{v}] \leq \tilde{\zerobf}\]
  where we retain the left inequality to obtain $\Sg_t[\overline{v}]\leq v^{\sbullet}+\tilde{\Delta} = \overline{v}$.

  We have satisfactorily shown that for all $t\geq 0$, we have 
  \[ \underline{v}\leq \Sg_t[\underline{v}] \leq v^{\sbullet} \leq \Sg_t[\overline{v}]\leq\overline{v}\,\]
  we can then invoke~\Cref{lemma:sandwich_v} for both $\underline{v}$ and $\overline{v}$, and we obtain that the limits of $\Sg_t[\underline{v}]$ and $\Sg_t[\overline{v}]$ when $t$ tends to $\infty$ do exist and are equal to $v^{\sbullet}$. Using the bounds stated on $\Sg_t[v^0]$, we conclude that $\lim_{t\to\infty} \Sg_t[v^0] = v^{\sbullet}$.
  \end{proof}
  
  \begin{proof}[Proof of~\Cref{prop:uniquely_determined_heaviside}]
    The first part of the proposition follows the same lines as~\Cref{prop:uniquely_determined}; the key ingredient that carries over is that the solution $v$ of~\eqref{eq:SMDP_finite_horizon} at $t$ is uniquely determined by the values of $v$ on $[-\tmax, t-\tmin]$.

    The show the second part, we denote by $v$ the complete trajectory of the $\lambda$-sink SMDP defined by~\Cref{prop:uniquely_determined_heaviside}. Supposing as in the beginning of~\Cref{ssec:SSP} that actions giving access to state $0$ are equipped with zero sojourn time, we have for $t\leq \underline{t}$ and $s$ in $[-\tmax,0)$ that $v(t+s) = w(t+s)$ (the affine stationary regime pursues along). For $t > \underline{t}$ and $s$ in $[-\tmax,0)$ such that $t+s \geq \underline{t}$ however, we have
    \[ \begin{aligned} v(t+s) & = \Sg_{t-\underline{t}}\big[r\mapsto w(\underline{t}+r)+M\unbm_{\{0\}}\big](s)\\ 
      & = \Sg_{t-\underline{t}}\big[ r\mapsto w(r) + M\unbm_{\{0\}}\big](s) + \lambda \underline{t}\unbf\end{aligned}\,,\]
    where we have used the property $w(r+\underline{t}) = \lambda(r+\underline{t})\unbf + u^* = w(r) +\lambda \underline{t}\unbf$ and the additively homogeneous character of the evolution semigroup (see~\Cref{prop:properties_semigroup}). The latter property can be used to convert the ``positive step'' arising on state $0$ at $t=\underline{t}$ to ``negative steps'' on states of $\SminusZ$:
    \[ v(t+s) = \Sg_{t-\underline{t}}\big[ r\mapsto w(r) - M\unbm_{\SminusZ}\big](s) + (\lambda \underline{t}+M)\unbf\,.\]
    The initial condition function in the last equation is in the space $\Vcal_{\lambda}$. We can verify using~\Cref{coro:value_cv} that we indeed have
    \[v(t+s)\underset{t\to\infty}{=}\lambda(t-\underline{t})\unbf + u^* + (\lambda \underline{t}+M)\unbf + o(1) = \lambda t \unbf + u^* + M\unbf + o(1)\,\]
    where we retrieve that the whole trajectory follows the new input up to a delay vector.
  \end{proof}

  \begin{proof}[Proof of~\Cref{thm:reduction_SSP_2}]
    The proof of~\Cref{prop:uniquely_determined_heaviside} shows that studying the solutions of the dynamics of a $\lambda$-sink SMDP initialized from the function $s\mapsto w(s)-M\unbm_{\SminusZ}$ of $\Vcal_{\lambda}$ allows one to characterize the catch-up of a step on the input; equivalently according to~\Cref{thm:normalization}, it also amounts to studying the dynamics of the reduced-costs SMDP in SSP configuration, associated with evolution semigroup $(\Sg^{\Delta})_{t\geq 0})$, initialized from the function $s\mapsto u^* - M\unbm_{\SminusZ}$ of $\Vcal_0$.
  \end{proof}

  \subsection{Proofs of~\Cref{sec:hierarchical}}

  \begin{proposition}\label{prop:monotone_evolution_from_udot}
    Suppose Assumptions~\ref{ass:nonzeno} and~\ref{ass:fluid_regime} hold. 
    Let $r\in\R$ and $v^0$ be the particular function of $\Vcal_0$ defined by 
    \[ v^0(s)=u^*+r\unbm_{\SminusZ} \quad\textforall s \textin [-\tmax,0)\,.\]
    We denote by $v$ the unique function of $[-\tmax,\infty)\to \R^{|S|}$ determined by $v^0$ and the dynamic programming equations~\eqref{eq:SMDP_finite_horizon} of the reduced-costs SMDP in SSP configuration. Then:
    \begin{enumerate}[label=(\roman*)]
      \item\label{prop:monotone_evolution_from_udot:item1} if $r>0$, then $v$ is non-increasing and $v(t)\geq u^*$ for all $t\geq 0$,
      \item\label{prop:monotone_evolution_from_udot:item2} if $r<0$, then $v$ is non-decreasing and $v(t)\leq u^*$ for all $t\geq 0$.
    \end{enumerate}
    \end{proposition}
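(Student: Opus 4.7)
The plan is to combine three properties of the evolution semigroup $(\Sg^{\Delta}_t)_{t\geq 0}$ associated with the reduced-cost SMDP in SSP configuration: its monotonicity and additive homogeneity (\Cref{prop:properties_semigroup}), the fixed-point identity $\Sg^{\Delta}_t[\vsb]=\vsb$ (\Cref{lemma:v_fixedpoint_of_Sg}), and the absorbing nature of state~$0$ in the reduced SSP (self-loop with zero reduced cost), which forces $v(0,\tau)\equiv 0$ for every $\tau\geq -\tmax$ whenever the initial condition lies in $\Vcal_0$. I focus on case~\ref{prop:monotone_evolution_from_udot:item1} ($r>0$), since case~\ref{prop:monotone_evolution_from_udot:item2} follows by reversing every inequality.

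The first step is to establish the pointwise two-sided bound $u^*\leq v(\tau)\leq v^0$ for all $\tau\geq -\tmax$. I observe that the initial condition is sandwiched as $\vsb\leq v^0 \leq \vsb + r\tilde{\unbf}$ in $\Vcal$, since $0 \leq r\unbm_{\SminusZ}\leq r\unbf$ when $r>0$ (the right inequality being tight precisely on coordinate~$0$). Applying $\Sg^{\Delta}_t$ and combining monotonicity, additive homogeneity and the fixed-point identity yields $\vsb \leq \Sg^{\Delta}_t[v^0] \leq \vsb + r\tilde{\unbf}$, or equivalently $u^* \leq v(\tau) \leq u^* + r\unbf$ for all $\tau\geq -\tmax$. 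The lower bound is exactly $v(\tau)\geq u^*$. The upper bound is then sharpened on coordinate~$0$ via the identity $v(0,\tau)\equiv 0$, yielding $v(\tau)\leq u^*+r\unbm_{\SminusZ}=v^0$.

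The monotonicity of $v$ then comes from a semigroup shift argument. For $\varepsilon\in(0,\tmax)$ set $v^0_\varepsilon \coloneqq \Sg^{\Delta}_\varepsilon[v^0]\in \Vcal_0$, so that $v^0_\varepsilon(s)=v(s+\varepsilon)$ on $[-\tmax,0)$. The bound just established gives $v^0_\varepsilon \leq v^0$ pointwise. Applying $\Sg^{\Delta}_t$ and using monotonicity together with the semigroup identity $\Sg^{\Delta}_t\circ \Sg^{\Delta}_\varepsilon=\Sg^{\Delta}_{t+\varepsilon}$ yields $\Sg^{\Delta}_{t+\varepsilon}[v^0]\leq \Sg^{\Delta}_t[v^0]$, which reads $v(\tau+\varepsilon)\leq v(\tau)$ for every $\tau\geq -\tmax$, proving that $v$ is non-increasing on its whole domain.

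I expect no serious obstacle here. The only mild subtlety is that additive homogeneity naturally produces comparisons against the constant function $\tilde{\unbf}$ of $\Vcal$, whereas the statement involves the strictly finer indicator $\unbm_{\SminusZ}$; bridging the gap relies precisely on the absorbing-state property of $0$ in the reduced SSP, which keeps the $0$-coordinate of every trajectory in $\Vcal_0$ identically zero and is what makes the sharpened bound $v(\tau)\leq v^0$ available.
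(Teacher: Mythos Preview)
Your proof is correct and follows essentially the same route as the paper. The paper simply points back to the proof of \Cref{thm:SSSP} (with $\delta=|r|$, so that your $v^0$ is precisely the $\overline v$ or $\underline v$ there) together with the monotonicity step inside the proof of \Cref{lemma:sandwich_v}; your write-up just makes these steps explicit. The one cosmetic difference is that you obtain the key bound $\Sg^\Delta_t[v^0]\leq v^0$ via additive homogeneity plus the absorbing-state identity $v(0,\cdot)\equiv 0$, whereas the paper obtains it via nonexpansiveness plus the stability of $\Vcal_0$; since nonexpansiveness is itself derived from monotonicity and additive homogeneity (Crandall--Tartar, cf.\ \Cref{prop:properties_semigroup}), the two arguments are equivalent. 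Your restriction $\varepsilon\in(0,\tmax)$ in the shift argument is unnecessary (the semigroup identity holds for all $\varepsilon\geq 0$) but harmless.
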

  \begin{proof}[Proof of~\Cref{prop:monotone_evolution_from_udot}]
    This result actually follows from the proof of~\Cref{thm:SSSP} and~\Cref{lemma:sandwich_v}, with $\delta=|r|$ and item~\ref{prop:monotone_evolution_from_udot:item1} (resp.\ item~\ref{prop:monotone_evolution_from_udot:item2}) corresponding to the study of $\overline{v}$ (resp.\ $\underline{v}$).
  \end{proof}

  \begin{proof}[Proof of~\Cref{thm:finite_time_theoretical}]
    We sketch the proof idea in the MDP case; this entails no loss of generality since in large horizons and thus when taking the sojourn times into account in an SMDP become less and less relevant, it is optimal to play in the SMDP the same actions that in the corresponding unit-time MDP.
Let us start by the special case in which
$u^*=0$, $A_i=A_i^*$ and $c^a=0$ for all $i$ and $a\in A_i^*$. 
Then, the implication \ref{thm:finite_time_theoretical:item1} $\implies$ \ref{thm:finite_time_theoretical:item3} above becomes a consequence of the following lemma.

Let $T:\R^S\to \R^S$ be such that $T_i(x)=\min_{a\in A_i^*}p^a x$.
  If there exists some $k$ such that $T^{k}(-\unbf) = 0$, then,
  there exists a partial ordering $(\leq)$ of $S$ such that
  for all $i\in[n]$ and $a\in A_i^*$, $p^a_j>0 \implies j<i$.

  To see this, consider the matrix $P$ whose $i$th row is given by
$P_i=\sum_{a\in A_i^*}p^a$,
let $N:=\max_{i\in S}|A_i^*|$, and observe that for all $x\leq 0$,
$T(x) \leq N^{-1} Px\leq 0$. Then
we deduce that $0=T^k(-\unbf) \leq -N^{-k}P^k\unbf =0$,
and since $P$ is nonnegative, this entails that $P^k=0$.
Then, the relation $\leq$ defined by $j\leq i$ if $P_{ij}>0$
must be acyclic, and so, its transitive closure yields a partial order,
satisfying the requirements of the lemma.
Next,~\cite[Lemma~6.4]{spectral} shows that if $v^0$ lies in a sufficiently
small neighborhood of $u^*$, then only proper optimal actions
are active in the dynamics~\eqref{eq:SMDP_finite_horizon},
allowing us to reduce to the special case of the previous lemma.
Finally, the equivalence between
\ref{thm:finite_time_theoretical:item2} and \ref{thm:finite_time_theoretical:item3} is deduced from a general property of non-linear spectral radii~\cite[Prop.~8.1]{akian2011collatz}, whereas the implication \ref{thm:finite_time_theoretical:item3} $\Rightarrow$ \ref{thm:finite_time_theoretical:item1} is elementary.
\end{proof}

  \begin{proof}[Proof of~\Cref{lemma:accessible_from_i}]
    Remember that relatively to the graph $\mathcal{G}_S$, the state $0$ is accessible from any other state, according to the assumption~\ref{ass:hierarchy1}, and in addition, an accessibility walk can be achieved using only actions in $\bigcup_{i\in S}A_i^-$. 

    The subgraph $\mathcal{G}^{(k)}_S$ represents only improper policies by assumption~\ref{ass:hierarchy2}. Thus, there is a state $j$ that has not access to $0$. This entails that any path connecting $j$ to $0$ in the original graph $\mathcal{G}_S$ (possibly obtained by only descending actions) had to go through state $k$ (the only state from which actions have been ruled out), otherwise these paths would still be available. It follows that $k$ itself has not access to $0$, otherwise $j$ could reach $0$ again, which proves~\ref{lemma:accessible_from_i:i}. Property~\ref{lemma:accessible_from_i:ii} follows since if $j$ is a state accessible from $k$ in $\mathcal{G}^{(k)}_S$, then $j$ has no longer access to $0$ in $\mathcal{G}^{(k)}_S$, and the above argument shows that $j$ has access to $k$.
  \end{proof}

  \begin{proof}[Proof of~\Cref{lemma:reduced_game}]
    Let $\sigma$ be a policy of \[\SSigma_i\coloneqq A_i^+\times\prod_{j\in \CFC{i}\setminus\{i\}}A_j\]
    that realizes the minimal average-cost vector $\chi(\widetilde{\Sg}^{(i)})$ of $\widetilde{\Sg}^{(i)}$, and let $F$ be a final class associated with $\sigma$ included in $\CFC{i}$ such that $\langle \mu^{\sigma}_{F}, r^{\sigma}\rangle/\langle \mu^{\sigma}_F, t^{\sigma}\rangle=\chi^{(i)}\coloneqq\min_{j\in\CFC{i}}\chi(\widetilde{\Sg}^{(i)})_j$. %
    
    Observe that $\chi^{(i)} \geq \underline{\chi}$ by definition of the latter in~\eqref{eq:chi_lower}. If for all $j\in\CFC{i}$, we have $\phi^{\sigma}_{F,j}=1$ (so that $F$ is the only final class of $\sigma$ in $\CFC{i}$), we directly obtain $\chi(\widetilde{\Sg}^{(i)}) = \chi^{(i)}\unbf$. Otherwise, since $\CFC{i}$ is strongly connected, we may construct a policy $\sigma'\in\SSigma_i$ (identical to $\sigma$ on $F$) such that $\sigma'$ admits $F$ as its unique final class and as a consequence $\chi^{(i)}\unbf$ as optimal cost vector. By minimality of $\chi(\widetilde{\Sg}^{(i)})$, we obtain $\chi(\widetilde{\Sg}^{(i)}) \leq \chi^{(i)}\unbf$ and it follows that $\chi(\widetilde{\Sg}^{(i)}) = \chi^{(i)}\unbf$.
  \end{proof}
  
  \begin{lemma}\label{lemma:theta_hierarchy}Let $i$ in $S$ such that $A_i^+\neq\emptyset$. Then, for all $j$ in $\CFC{i}$, we have $\theta'_j\geq\theta'_i$.
  \end{lemma}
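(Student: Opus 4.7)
The plan is to prove the stronger pointwise implication: for every $t\geq 0$, if $v'(j,t)=u^*(j)$ then $v'(i,t)=u^*(i)$, which immediately yields $\theta'_i\leq t=\theta'_j$.

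First I would establish a purely topological observation drawn from \Cref{lemma:accessible_from_i}: every path from $j$ to $0$ in the SMDP accessibility graph $\mathcal{G}_S$ must visit $i$. Indeed, since $j\in\CFC{i}$, both $i$ and $j$ lie in the same strongly connected component of $\mathcal{G}_S^{(i)}$, so $i$ has access to $j$ and $j$ has access to $i$ in $\mathcal{G}_S^{(i)}$; if $j$ also had access to $0$ in $\mathcal{G}_S^{(i)}$, concatenating $i\to\cdots\to j\to\cdots\to 0$ would contradict \Cref{lemma:accessible_from_i}\ref{lemma:accessible_from_i:i}. Since $\mathcal{G}_S^{(i)}$ is obtained from $\mathcal{G}_S$ by removing descending actions only at state $i$, any $j$-to-$0$ path in $\mathcal{G}_S$ must employ one of these removed actions and therefore pass through $i$.

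Next I would use the SSP fixed-point equation~\eqref{eq:fixed_point_of_T} to propagate the catch-up down the hierarchy. By \Cref{prop:monotone_evolution_from_udot}\ref{prop:monotone_evolution_from_udot:item2}, $v'\leq u^*$ pointwise. Fixing any $\ell\in\SminusZ$ with $v'(\ell,t_\ell)=u^*(\ell)$ and any optimal $a^*_\ell\in A_\ell^*$, the dynamic programming inequality $v'(\ell,t_\ell)\leq (c^{a^*_\ell}-\lambda t^{a^*_\ell})+\sum_m p^{a^*_\ell}_m v'(m,t_\ell-t^{a^*_\ell})$ combined with the fixed-point identity $u^*(\ell)=(c^{a^*_\ell}-\lambda t^{a^*_\ell})+\sum_m p^{a^*_\ell}_m u^*(m)$ sandwiches two equal quantities, forcing $v'(m,t_\ell-t^{a^*_\ell})=u^*(m)$ for every $m\in\supp(a^*_\ell)$. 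Under \Cref{ass:hierarchy1,ass:hierarchy2}, \Cref{thm:finite_time_theoretical} guarantees $A_\ell^*\subset A_\ell^-$, so each such $m$ lies strictly below $\ell$ in $(\leq)$.

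Iterating this expansion from the root $\ell=j$, $t_j=\theta'_j$ builds a finite tree $\mathcal{T}$ whose internal nodes branch into the supports of chosen optimal descending actions, whose every node $m$ carries $v'(m,t_m)=u^*(m)$ for some $t_m\leq\theta'_j$, and whose leaves all equal $0$ (since strict descending chains in the finite poset $(S,\leq)$ terminate at the unique element with $A_\ell^-=\emptyset$, namely $0$, by \Cref{ass:hierarchy1}). Each root-to-leaf branch of $\mathcal{T}$ is then a descending path from $j$ to $0$ in $\mathcal{G}_S$; by the first step, at least one such branch traverses $i$, so $i\in\mathcal{T}$ and $\theta'_i\leq t_i\leq\theta'_j$.

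The main obstacle I anticipate is confirming the well-definedness of the iteration: at every internal node $\ell$ some $a^*_\ell\in A_\ell^*$ must exist (minimum over the finite set $A_\ell$) and must lie in $A_\ell^-$ (via \Cref{thm:finite_time_theoretical}), both of which hold on $\SminusZ$. It then remains to verify that the recursive offsets $t_m=t_\ell-t^{a^*_\ell}$ stay nonnegative at every nonzero node of $\mathcal{T}$; this follows from the initial condition $v'(m,s)=u^*(m)-M$ on $\SminusZ\times[-\tmax,0)$ of \Cref{thm:reduction_SSP_2}, which makes any $t_m<0$ with $m\neq 0$ incompatible with $v'(m,t_m)=u^*(m)$ when $M>0$ (the case $M=0$ giving $\theta'\equiv 0$ trivially).
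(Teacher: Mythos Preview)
Your proof is correct and follows essentially the same approach as the paper's: both arguments propagate the catch-up property downward through the hierarchy by combining the fixed-point identity~\eqref{eq:fixed_point_of_T} with the bound $v'\leq u^*$ from \Cref{prop:monotone_evolution_from_udot} to force $v'(m,t-t^a)=u^*(m)$ on the support of an optimal descending action, and then iterate. The only cosmetic difference is in how the iteration is terminated: the paper stays inside $\CFC{i}$ throughout (using that for $\ell\in\CFC{i}\setminus\{i\}$ every action keeps one in $\CFC{i}$, a consequence of \Cref{lemma:accessible_from_i}\ref{lemma:accessible_from_i:ii}) and halts upon reaching $i$; you instead run the descent all the way down to $0$ and appeal to the global topological fact that every $j$-to-$0$ path in $\mathcal{G}_S$ must cross $i$. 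These are minor variations on the same idea.

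One small remark: the inclusion $A_\ell^*\subset A_\ell^-$ that you use is stated in the paper in the paragraph following Assumption~\ref{ass:hierarchy2} as a direct consequence of Assumptions~\ref{ass:hierarchy1}, \ref{ass:hierarchy2} and the SSP conditions (every optimal policy is proper, and proper policies use only descending actions), rather than from \Cref{thm:finite_time_theoretical} itself; but this is only a matter of citation.
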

  
  \begin{proof}[Proof of~\Cref{lemma:theta_hierarchy}]
    Indeed for such a $j$, \eqref{eq:SMDP_finite_horizon} implies:
    \[ \min_{a\in A_j^-}\bigg\{ c^a + \sum_{k\in S} p^a_k\, v(k, \theta_j-t^a)\bigg\} \geq v(j,\theta'_j) = u^*(j) \]
    However by monotonicity of $v$ which is non-decreasing and tends towards $u^*$, we have
    \[\begin{aligned} \min_{a\in A_j^-}\bigg\{ c^a + \sum_{k\in S} p^a_k\, v(k, \theta'_j-t^a)\bigg\} &\leq \min_{a\in A_j^-}\bigg\{ c^a + \sum_{k\in S} p^a_k\, u^*(k)\bigg\} \\ &\leq u^*(j)\end{aligned} \]
    again using the fixed-point property in~\eqref{eq:fixed_point_of_T} verified by $u^*$.
    As a result we obtain the equality 
    \[ \min_{a\in A_j^-}\bigg\{ c^a + \sum_{k\in S} p^a_k\, v(k, \theta'_j-t^a)\bigg\} = \min_{a\in A_j^-}\bigg\{ c^a + \sum_{k\in S} p^a_k\, u^*(k)\bigg\} \]
    Invoking monotonicity still, it is easily seen that there exists a particular $a$ in $A_j^-$ in both argmins above, i.e. such that 
    \[ c^a + \sum_{k\in S} p^a_k\, v(k, \theta'_j-t^a) = c^a + \sum_{k\in S} p^a_k\, u^*(k) = u^*(j)\]
    (otherwise we could find an action in the right-handside argmin and not in the left-handside argmin, which would contradict non-decreasingness of $v$). The bounds on $v$ are satured, hence
    \[\forall k \in \supp(a),\quad v(k,\theta'_j-t^a) = u^*(k)\]
    We deduce that for all $k$ in $\supp(a)$, we have $\theta'_k\leq \theta'_j-t^a\leq \theta'_j$. Recall in addition that since $a$ is in $A_j^-$, $\supp(a)$ in included in $\{k\in S\;|\; k < j\}$, and if $j\neq i$, then $\supp(a)$ is also included in $\CFC{i}$ (indeed according to~\Cref{lemma:accessible_from_i}, going through state $i$ \emph{via} a descending action is the only manner of leaving $\CFC{i}$). Therefore, if $i$ is in $\supp(a)$, it is clear that $\theta'_i\leq \theta'_j$, otherwise it suffices to reapply this reasoning (finitely many times) for all $k$ in $\supp(a)$ until eventually hitting $i$.
  \end{proof}
  
  \begin{proof}[Proof of~\Cref{thm:bound_SSP}]
    We prove the result by showing by induction that the property $\mathcal{P}(i)$: ``the bound on $\theta'_i$ in the theorem is valid%
    '' holds for all state $i$ in $S$. To this purpose, it suffices to show that $\mathcal{P}(0)$ is true (initialization), and that if $\mathcal{P}(j)$ holds for all $j< i$, then $\mathcal{P}(i)$ holds as well (heredity). Indeed, although not all states are comparable relatively to the partial ordering $(\leq)$, the state $0$ is a minimal element and any state can reach $0$ using only descending actions (following from Assumption~\ref{ass:hierarchy1}); finally  all states are comparable within such a path. 
    
    It is immediate to obtain that \udensdot{$\mathcal{P}(0)$ is true}, i.e. $\theta'_0=0$, since for all $t\geq 0$, we have $v(0,t) = 0$ (recall that $\Vcal_0$ is stable under evolution equations associated with SSP configurations). %

    In what follows, we let $i\in \SminusZ$ and suppose that $\mathcal{P}(j)$ is true for all $j\in S$ such that $j<i$. Cases are distinguished based on the existence or absence of ascending actions from $i$.

    \udensdot{Let $i\in \SminusZ$ such that $A_i^+ = \emptyset$}, so that by~\eqref{eq:SMDP_finite_horizon}, for all $t\geq 0$,
    \[ v(i,t) = \min_{a\in A_i^-}\bigg\{c^a +\sum_{j\in S} p^a_j\,  v(j,t-t^a) \bigg\}.\] 

    Let us take $t=\max_{a\in A_i^-}\big\{ t^a+\max_{j \in \supp(a)} \theta'_j\big\}$.
    For all $a$ in $A_i^-$, we have for all $j$ in $\supp(a)$ that $v(j,t-t^a)=u^*(j)$, by monotonicity of $v$. As a result, we have 
    \[ v(i,t) = \min_{a\in A_i^-}\bigg\{c^a + \sum_{j\in S}p^a_j\,u^*(j)\bigg\}\,\]
    where we recognize the righthand side of~\eqref{eq:fixed_point_of_T}, which provides $v(i,t)=u^*(i)$. As a result we must have by minimality $\theta'_i \leq \max_{a\in A_i^-}\big\{ t^a+\max_{j \in \supp(a)} \theta'_j\big\}$.

    \udensdot{Let us now take $i\in \SminusZ$ such that $A_i^+ \neq \emptyset$}.
    Denote for short 
    \begin{equation}\label{eq:def_Theta_i}\Theta_i\coloneqq \max_{a\in A_i^-}\big\{ t^a+\max_{j \in \supp(a)} \theta'_j\big\}\,.\end{equation}
    Since the fixed-point in~\eqref{eq:fixed_point_of_T} is achieved by some proper policy which by Assumption~\ref{ass:hierarchy2} makes use of only descending actions, we have (by an argument similar as the previous case) for all $t\geq\Theta_i$:
    \[ v(i,t) = \min\bigg( u^*(i)\,,\, \min_{a\in A_i^+}\bigg\{ c^a + \sum_{j\in S} p^a_j\, v(j, t-t^a)\bigg\}\bigg)\,.\]
    In addition, still by monotonicity of $v$, we know that the minimum in the above equation must be attained by the rightmost term if $t < \theta'_i$. Suppose that $\theta'_i > \Theta_i$ (the opposite leads to the same upper bound $\theta'_i\leq \Theta_i$ as before).
    
    For all $t$ such that $\Theta_i\leq t < \theta'_i$, only actions of $A_i^+$ are excerced from state $i$ to determine $v(t)$ based on the values of $v$ at prior times, and as a result the evolution of $v$ satisfies %
    \[ \forall s \in [-\tmax,0), \quad v(t+s) = \Sg^{(i)}_{t-\Theta_i}\big[v(\cdot + \Theta_i)\big](s)\]
    As shown in Lemma~\ref{lemma:reduced_game}, it is necessary and sufficient to focus on states of $\CFC{i}$ to analyze the evolution of $v(i,t)$, since this class is fully autonomous under the action of $(\Sg^{(i)}_t)_{t\geq 0}$. In particular, reusing the notation of this lemma, we have 
    \begin{equation}\label{eq:dyn_i_plus} \forall s \in [-\tmax,0), \quad v(t+s)\big|_{\CFC{i}} = \widetilde{\Sg}^{(i)}_{t-\Theta_i}\big[v^0\big](s)\,.\end{equation}
    where $v^0$ is the function of $[-\tmax,0)$ to $\R^{|\CFC{i}|}$ such that 
    \[ \textforall s \textin [-\tmax,0),\quad v^0(s)\coloneqq v(s+\Theta_i)\big|_{\CFC{i}}\,.\]  

    Invoking monotonicity of $v$, we can bound $v^0$ by below using our initial condition:
    \[ \forall s\in[-\tmax,0),\quad v^0(s) \geq -\minMi\unbf + u^*\big|_{\CFC{i}} \]
    Using the fact given by~\Cref{lemma:reduced_game} and Assumption~\ref{ass:fluid_SSP_configuration} that $\chi^{(i)}>0$, we can also bound $v^0$ by below by the Kohlberg affine invariant function of $\widetilde{S}^{(i)}$, up to shifting it by a constant. We can write for all $s$ in $[-\tmax,0)$: 
    \[ v^0(s) \geq s\chi^{(i)}\unbf + h^{(i)} -\underbrace{\left(\minMi+\max_{k\in\CFC{i}}\big\{h^{(i)}_k-u^*(k)\big\}\right)}_{\alpha}\unbf \]
    Unravelling the dynamics by applying the operator $\widetilde{\Sg}^{(i)}_{t-\Theta_i}$ as done in~\eqref{eq:dyn_i_plus} has just the effect of a shift in time of magnitude $t-\Theta_i$ for this last affine function, owing to~\Cref{prop:affine1} and to the additively homogeneous character of the evolution semi-group (\Cref{prop:properties_semigroup}). Building on the order-preserving character of the latter (\Cref{prop:properties_semigroup} as well), we obtain from~\eqref{eq:dyn_i_plus} that for all $s$ in $[-\tmax, 0)$:
    \begin{equation}\label{eq:dyn_i_plus_2}
      v(t+s)\big|_{\CFC{i}} \geq (t+s-\Theta_i)\chi^{(i)}\unbf + h^{(i)} -\alpha\unbf\,.
    \end{equation}
    Now suppose by contradiction that $\theta'_i > \Theta_i+M/\chi^{(i)}$ 
    so that for $\varepsilon>0$ sufficiently small, we can let $t=\Theta_i+M/\chi^{(i)}+\varepsilon$ in~\eqref{eq:dyn_i_plus} and then evaluate the result in $s=-\varepsilon$, to get \textit{via} \eqref{eq:dyn_i_plus_2}
    \begin{equation}\label{eq:dyn_i_plus_3} v\big(\Theta_i+M/\chi^{(i)}\big)\big|_{\CFC{i}} \geq h^{(i)} - \max_{k\in\CFC{i}}\big\{h^{(i)}_k-u^*(k)\big\}\unbf\,.\end{equation}
    But by denoting $j$ a state of $\CFC{i}$ such that $\max_{k\in\CFC{i}}\big\{h^{(i)}_k-u^*(k)\big\}=h^{(i)}_j-u^*(j)$, we are left with 
    \[ v\big(j,\Theta_i+M/\chi^{(i)}\big) \geq u^*(j)\,,\]
    from which we conclude that $\theta'_j\leq \Theta_i+M/\chi^{(i)}$, however we know from~\Cref{lemma:theta_hierarchy} that $\theta'_i\leq\theta'_j$, which contradicts our hypothesis $\theta'_i> \Theta_i+M/\chi^{(i)}$. We have therefore proved that $\theta'_i \leq \Theta_i+M/\chi^{(i)}$, which is the bound announced by the theorem 
    which achieves to show that $\mathcal{P}(i)$ is true and completes the induction.
    \end{proof}

\end{document}